\def\RR{\mathbb R}
\def\NN{\mathbb N}
\def\HH{\mathbb H}
\newtheorem{theorem}{Theorem}
\newtheorem{lemma}{Lemma}
\newtheorem{remark}{Remark}
\newtheorem{propos}{Proposition}
\newcommand\supp{\mathop{\rm supp}}
\DeclareMathOperator*{\argmin}{\arg\!\min}
\newcommand{\sgn}{\operatorname{sgn}}
\newcommand{\wlim}{\operatorname{w-lim}}
\newcommand{\diag}{\operatorname{diag}}
\title{Minimization of multi-penalty functionals by alternating iterative thresholding and optimal parameter choices}
\date{}
\author{Valeriya Naumova}
\affil{Simula Research Laboratory, \\Martin Linges vei 17, 1364 Fornebu, Norway}
\author{Steffen Peter}
\affil{Faculty of Mathematics, Technical University Munich,\\ Boltzmannstrasse 3, 85748
Garching, Germany}
\begin{document}

\maketitle
\begin{abstract}
Inspired by several recent  developments in regularization theory, optimization, and signal processing,  we present and analyze a numerical approach to multi-penalty regularization in spaces of sparsely represented functions. The sparsity prior is motivated by the largely expected geometrical/structured features of high-dimensional data, which may not be well-represented in the framework of typically more isotropic Hilbert spaces. 
In this paper, we are particularly interested in regularizers which are able to correctly model and separate the multiple components of additively mixed signals. This situation is rather common as pure signals may be corrupted by additive noise. 
To this end, we consider a regularization functional composed by a data-fidelity term, where signal and noise are additively mixed, a non-smooth and  non-convex sparsity promoting term, and a penalty term to model the noise.
We propose and analyze the convergence of an iterative alternating algorithm based on simple iterative thresholding steps to perform the minimization of the functional. 
By means of this algorithm, we explore the effect of choosing different regularization parameters and penalization norms in terms of the quality of recovering the pure signal and separating it from additive noise. For a given fixed noise level numerical experiments confirm a significant improvement in performance compared to standard one-parameter regularization methods. By using high-dimensional data analysis methods such as Principal Component Analysis, we are able to show the correct geometrical clustering of regularized solutions around the expected solution. Eventually, for the compressive sensing problems considered in our experiments we provide a guideline for a choice of regularization norms and parameters.

\end{abstract}
\section{Introduction}

In several interesting real-life problems we do not dispose directly of the quantity of interest, but only data from indirect observations are given. Additionally or alternatively to possible noisy data, the original signal to be recovered may be affected by its own 
noise. In this case, the reconstruction problem can be understood as an inverse problem, where the solution $x^\dag$ 
consists of two components of different nature, the relevant signal and its noise, to be separated.

Let us now formulate mathematically the situation so concisely described.  Let $\mathcal K$ and $\mathcal H$ be (separable) Hilbert spaces and $T: \mathcal K \rightarrow \mathcal H$ be a bounded linear operator.
For the moment we do not specify $T$ further. 
To begin, we consider a model problem of the type
\begin{equation}
	\label{unmixing_pproblem}
	y = T (u^\dag+v^\dag),
\end{equation}
where $u^\dag, v^\dag$ are the two components of the solution $x^\dag$ which we wish to identify and to separate.  In general, this unmixing problem
has clearly an infinite number of solutions. In fact, let us define the operator
$$
S:\mathcal K \times \mathcal K \to \mathcal H, \quad 
S \left ( 
\begin{array}{l} u \\ 
v 
\end{array} 
\right )
:= T (u + v).
$$
Its kernel is given by 
$$
\ker S= \left \{ \left ( 
\begin{array}{l} u \\ 
v 
\end{array} 
\right ) \in  \mathcal K \times \mathcal K : v= - u + \xi, \quad \xi \in \ker T \right \}.
$$
If $T$ had closed range then $S$ would have closed range and the operator
$$
S/\sim:(\mathcal K \times \mathcal K)/\ker S \to \mathcal H, \quad S \left ( \left [\left ( 
\begin{array}{l} u \\ 
v 
\end{array} 
\right ) \right ]_\sim \right ) \mapsto T(u+v),
$$
would be boundedly invertible on the new restricted quotient space $(\mathcal K \times \mathcal K)/\ker S$
of the equivalence classes given by
$$
\left ( 
\begin{array}{l} u \\ 
v 
\end{array} 
\right ) \sim \left ( 
\begin{array}{l} u' \\ 
v' 
\end{array} 
\right )
\mbox{ if and only if } (v -v')+(u-u') \in \ker T.
$$
Unfortunately, even in this well-posed setting, each of these equivalence classes is huge, and very different representatives can be picked as solutions.
In order to facilitate the choice, one may want to impose additional conditions on the solutions according to the expected structure. As mentioned above,
we may wish to distinguish a relevant component $u^\dag$ of the solution from a noise component $v^\dag$. Hence, in this paper we focus on the situation
where $u^\dag$ can be actually represented as a sparse vector considered as coordinates with respect to a certain orthogonal basis  in $\mathcal K$, and $v^\dag$ has bounded coefficients
up to a certain noise level $\eta>0$ with respect to the {\it same} basis. For the sake of simplicity, we shall identify below vectors in $\mathcal K$
with their Fourier coefficients in $\ell_2$ with respect to the fixed orthonormal basis. 
Let us stress that considering different reference bases is also a very
interesting setting when it comes to separation of components, but it will not be considered for the moment within the scope of this paper.\\
As a very simple and instructive example of the situation described so far, let us assume $\mathcal K = \mathcal H = \mathbb R^2$ and $T=I$
is the identity operator. Under the assumptions on the structure of the interesting solution $y=x^\dag = u^\dag + v^\dag$, without loss
of generality we write $u^\dag = ( u_1^\dag, 0)$ for $R= u_1^\dag >0$ and $\max\{|v_1^\dag|, |v_2^\dag|\} =\eta = |y_2|>0$. We consider now the 
following constrained problem: depending on the choice of $R>0$, find $u, v \in \mathbb R^2$ such that
\begin{equation*}
 \mathcal P(R) \quad \quad \quad u \in B_{\ell_p}(R), v \in B_{\ell_q}(|y_2|) \mbox{ subject to } u + v = y,
\end{equation*}
where $q=\infty$ and $0<p<1$.

\begin{figure}[htp]
\begin{center}
\includegraphics[width=2.5in]{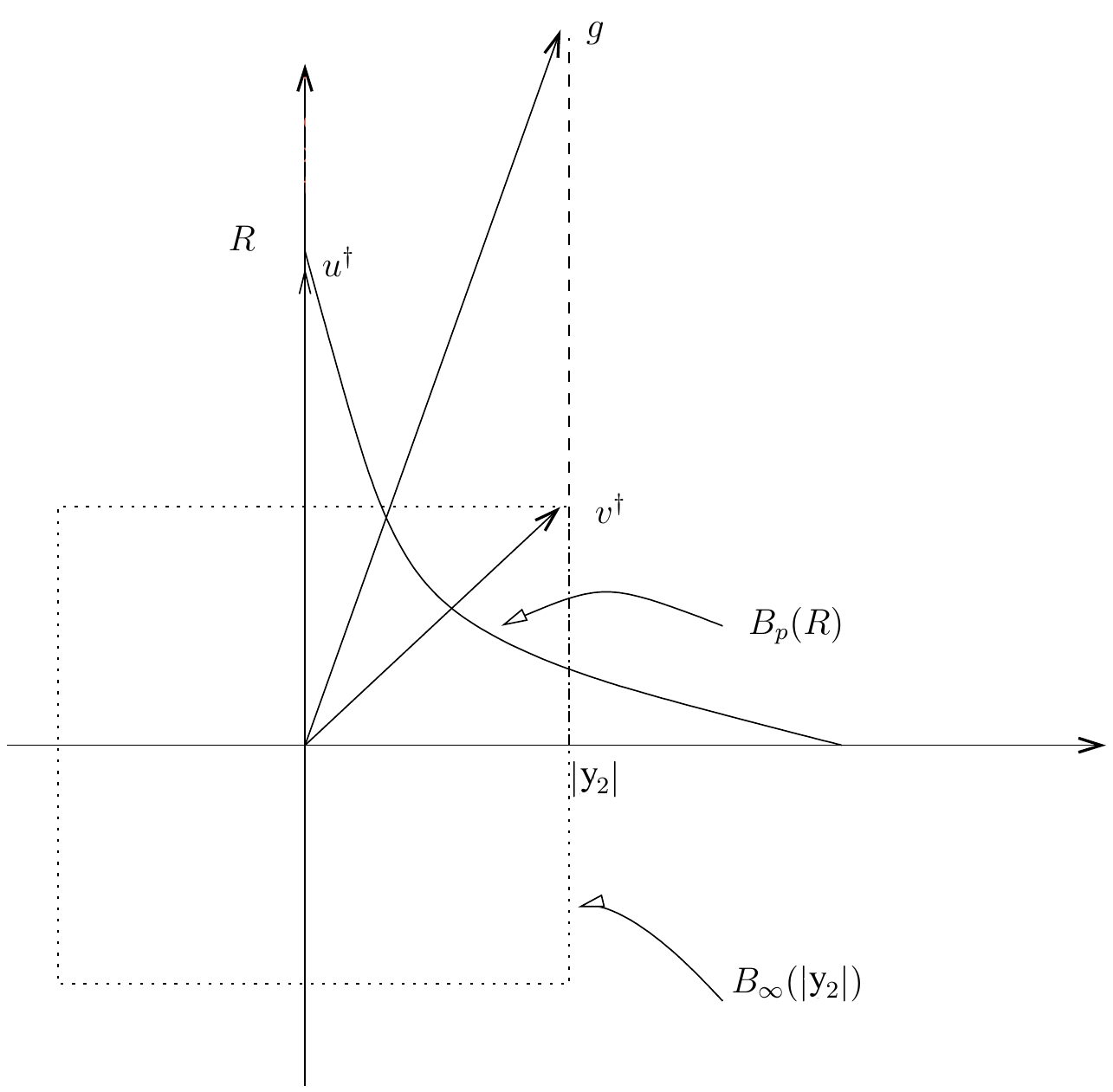}
\end{center}
\caption{Geometrical interpretation of the problem in 2D.}
\label{fig:biribi}
\end{figure}

Simple geometrical arguments, as illustrated in Figure \ref{fig:biribi}, yield to the existence of a special 
radius $R^*=R^*(\eta,p)>0$ for which only three situations can occur:
\begin{itemize}
\item If $R<R^*$ then problem $\mathcal P(R)$ has no solutions;
\item If $R>R^*$ then there are infinitely many solutions of $\mathcal P(R)$ and the larger $R$ is, the larger is the set of
solutions (in measure theoretical sense), including many possible non-sparse solutions in terms of the
$u$ component;
\item If $R=R^*$ there is only one solution for the problem $\mathcal P(R),$ whose $u^\dag$ components are actually sparse.
\end{itemize}
Hence, once the noise level $\eta$ on the solution is fixed, the parameter $R>0$ can be actually seen as a regularization parameter of the problem, which is smoothly going
from the situation where no solution exists, to the situation where there are many solutions, going through the
well-posed situation where there is actually only one solution. 
In order to promote uniqueness, one may also reformulate $\mathcal P(R)$ in terms of the following optimization problem, depending on $R>0$ and an additional parameter $\beta>0$:
\begin{equation*}
 \mathcal P^{\operatorname{opt}}(R,\beta) \quad \quad \quad \argmin_{%
      \substack{
u \in  B_{\ell_p}(R),\\
 v \in  B_{\ell_q}(|y_2|)
      }
    }  \|u\|_{\ell_p}^p +  \beta \|v\|_{\ell_q}^q  \mbox{ subject to } u + v = y.
\end{equation*}
 (Here and later we make an abuse of notation by assuming the convention that $\| \cdot \|_{\ell_q}^q = \| \cdot \|_{\ell_q}$ as soon as $q = \infty$.) 
The finite dimensional constrained problem $\mathcal P(R)$ or its constrained optimization version $\mathcal P^{\operatorname{opt}}(R,\beta)$ can be also recast in Lagrangian form as follows:

\begin{equation*}
 \mathcal P(\alpha,\beta) \quad \quad \quad \argmin_{u,v} \|u + v - y\|_{\ell_2}^2 + \alpha \| u\|_{\ell_p}^p + \beta \|v\|_{\ell_q}^q.
\end{equation*}
  
Due to the equivalence of the problem $\mathcal P(R)$ with  a problem of the type $\mathcal P(\alpha,\beta)$ for suitable
$\alpha=\alpha(R)>0$, $\beta=\beta(R)>0,$ we infer the existence of a parameter choice $(\alpha^*,\beta^*)$ for which
$\mathcal P(\alpha^*,\beta^*)$ has actually a unique solution $(u^\dag,v^\dag)$ such that $y = u^\dag + v^\dag$. 
For other choices there might be infinitely many solutions $(u,v)$ for which $\| u + v -y \|_{\ell_2}^2 \geq 0$. 
While the solution in $\mathbb R^2$ of the problem $\mathcal P(R)$ follows by simple geometrical arguments, in higher dimension
the form $\mathcal P(\alpha,\beta)$ may allow us to explore solutions via a rather simple algorithm based on alternating minimizations:
We shall consider the following iteration, starting from $u^{(0)}=0=v^{(0)}$,
\begin{eqnarray*}
u^{(n+1)} &=& \argmin_{u} \|u + v^{(n)} - y\|_{\ell_2}^2 + \alpha \| u\|_{\ell_p}^p,\\
v^{(n+1)} &=& \argmin_{v} \|u^{(n+1)} + v - y\|_{\ell_2}^2 + \beta \| v\|_{\ell_q}^q.
\end{eqnarray*}
As we shall see in details in this paper, both these two steps are explicitly solved by means of simple thresholding operations, making this
algorithm extremely fast and easy to implement. As we will show in Theorem~\ref{theorem:minimizers} of this article, the  algorithm above converges to a solution of $\mathcal P(\alpha,\beta)$ in the case of $p=1$ and at least to a local minimal solution in the case of $0<p<1$. To get an impression about the operating principle of this alternating algorithm, in the following, we present the results of  representative 2D experiments. To this end, we fix $y = \left(0.3, 1.35\right)^T$, and consider $0 \leq p < 2$ in order to promote sparsity in $u^{\dagger}$,  $ q \geq 2$ in order to obtain a non-sparse $v^{\dagger}$. \\

First, consider the case $p=1$. Due to the strict convexity of $\mathcal P(\alpha,\beta)$ for $p=1$ and $q \geq 2$, the computed minimizer is unique.  In Figure~\ref{fig:plotp1fixed} we visually estimate the \emph{regions of solutions} for $u^{\dagger}$ and $v^{\dagger}$, which we define as
\[\mathcal{R}_{p,q}^u:=\left\{u^{\dagger}|(u^{\dagger},v^{\dagger}) \text{ is the solution of } \mathcal P(\alpha,\beta), \text{ for } \alpha,\beta>0\right\}, \]
\[\mathcal{R}_{p,q}^v:=\left\{v^{\dagger}|(u^{\dagger},v^{\dagger}) \text{ is the solution of } \mathcal P(\alpha,\beta), \text{ for } \alpha,\beta>0\right\},\]
by $\times$- and $\ast$-markers respectively. Notice that this plot does not contain a visualization of the information of which $u^{\dagger}$ belongs to which  $v^{\dagger}$. In the three plots for $q\in\left\{2,4,\infty\right\}$, the above sets are discretized by showing the solutions for all possible pairs of $\alpha,\beta\in\{0.1\cdot i|i=1,\ldots,20\}$.

\begin{figure}[ht]
\includegraphics[width=0.32\textwidth]{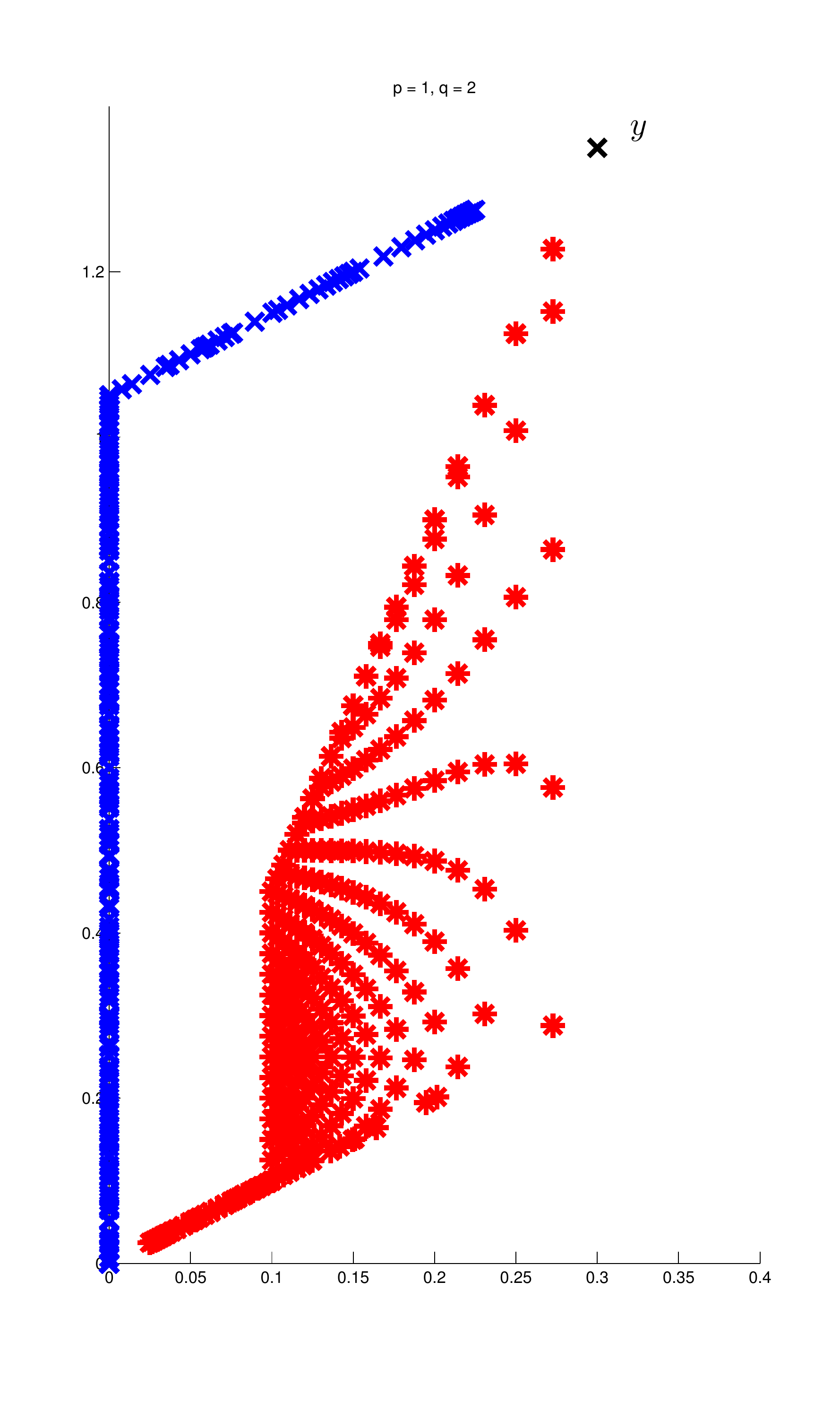}
\includegraphics[width=0.32\textwidth]{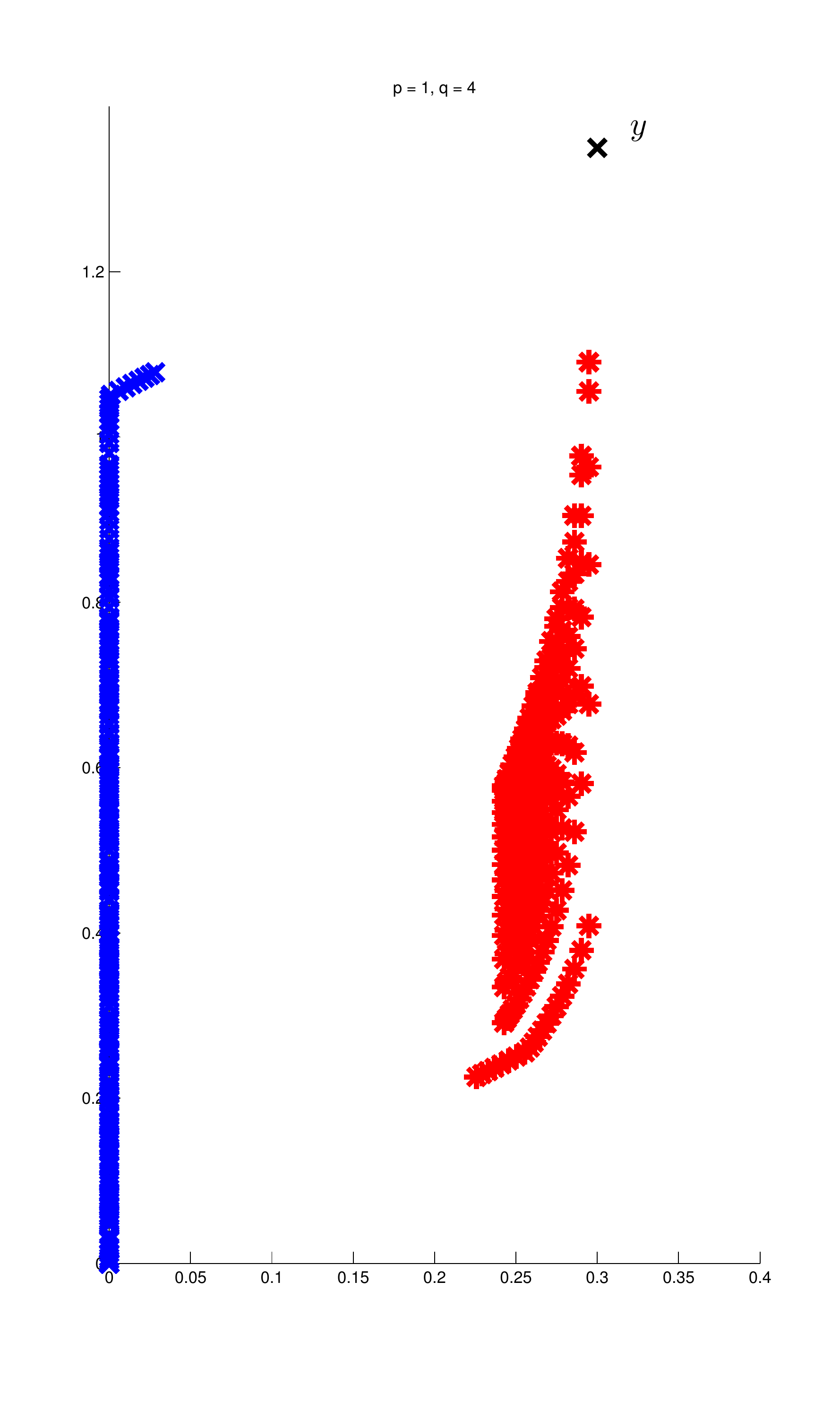}
\includegraphics[width=0.32\textwidth]{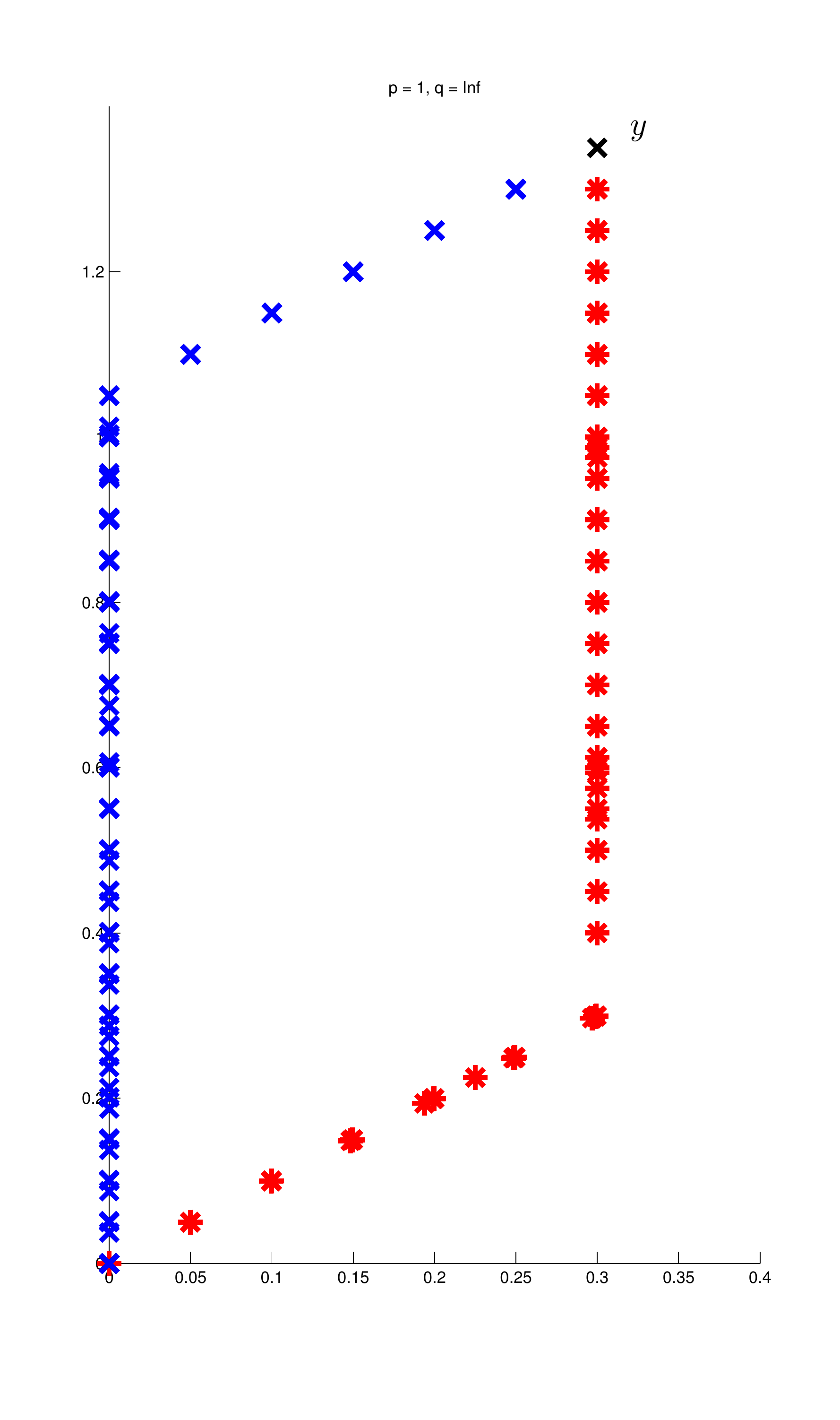}
\caption{Estimated regions of solution for $p=1$ and $q\in\left\{2,4,\infty\right\}$.}
\label{fig:plotp1fixed}
\end{figure}

We immediately observe that the algorithm is computing solutions $u^{\dagger},v^{\dagger}$ in a certain region which is bounded by a parallelogram. In particular, independently of the choice of $q$, the solutions $u^{\dagger}$ are distributed only on the upper and left-hand side of this parallelogram, while the solutions $v^{\dagger}$ may be also distributed in its interior. Depending on the choice of $q$, the respective regions seem to cover the lower right-hand  \enquote{triangular} part of the parallelogram, having a straight ($q=2$), or concave ($q>2$) boundary. In the case of $q=\infty$, all solutions can be found on the right-hand and lower side of the parallelogram, which represents the limit of the concave case. \\

To explain the above results, we have to give a detailed look at a single iteration of the algorithm. As mentioned above, the algorithm is guaranteed to converge to the global minimizer independently on the starting vector. 
Therefore, for simplicity and transparency we choose $u^{(0)}=v^{(0)} =0.$  The case of $p=1$ and $q=\infty$ reveals the most ``structured" results in terms of the region of solutions, namely piecewise linear paths. Thus, we consider this parameter pair as a reference for the following explanations. In Figure~\ref{fig:exp_algo}  we explicitly show the first three iterations of the algorithm, as well as the totality of 13 iterations, setting $\alpha = 0.4$ and $\beta=0.5$. To get a better understanding of what the algorithm  is doing, we introduce the notion of  \emph{solution path}, which we define as the set of minimizers, depending on $\alpha$ and $\beta$ respectively, 
\[U_p^{n+1} := \left\{\bar{u} | \bar{u} =  \argmin_{u} \|u + v^{(n)} - y\|_{\ell_2}^2 + \alpha \| u\|_{\ell_p}^p, \alpha > 0 \right\}, \]
\[V_q^{n+1} := \left\{\bar{v} | \bar{v} =  \argmin_{v} \|v + u^{(n+1)} - y\|_{\ell_2}^2 + \beta \| v\|_{\ell_q}^q, \beta > 0 \right\}. \]
As we shall show in Section~\ref{sec:newthresholdingoperators}, these sets can be described, explicitly in the case of $p=1$ and $q=\infty$, by simple thresholding operators $u^{(n+1)} = \mathbb S^1_{\alpha}(y-v^{(n)})$, and $v^{(n+1)}=\mathbb S^{\infty}_{\beta}(y-u^{(n+1)})$, where $\left(\mathbb S^1_{\alpha}(y-v^{(n)})\right)_i := \max\left\{1-\frac{\alpha}{2|y_i-v_i^{(n)}|},0\right\}(y_i-v_i^{(n)})$, $i=1,2$, and 
\[\mathbb S^{\infty}_{\beta}(y-u^{(n+1)}) := \begin{cases} \left(\begin{matrix}0 \\ 0\end{matrix}\right), & |y_1-u^{(n+1)}_1| + |y_2-u^{(n+1)}_2| < \beta / 2, \\ \left(\begin{matrix}\sgn(y_1-u^{(n+1)}_1)(|y_1-u^{(n+1)}_1|-\beta / 2 ) \\ y_2-u^{(n+1)}_2\end{matrix}\right), & |y_2-u^{(n+1)}_2| < |y_1-u^{(n+1)}_1| - \beta / 2 ,
\\ \left(\begin{matrix} y_1-u^{(n+1)}_1 \\ \sgn(y_2-u^{(n+1)}_2)(|y_2-u^{(n+1)}_2|-\beta / 2 ) \end{matrix}\right), & |y_1-u^{(n+1)}_1| < |y_2-u^{(n+1)}_2| - \beta / 2, \\ \frac{|y_1-u^{(n+1)}_1|+|y_2-u^{(n+1)}_2|-\beta / 2 }{2}\left(\begin{matrix}\sgn(y_1-u^{(n+1)}_1), \\ \sgn(y_2-u^{(n+1)}_2)\end{matrix}\right), & \text{else.} \end{cases}  \]

In Figure~\ref{fig:exp_algo} the solution paths $U_1^{n+1}$ and $V_{\infty}^{n+1}$ are presented as dashed and dotted lines respectively. We observe the particular shape of a piecewise linear one-dimensional path. Naturally, $u^{(n+1)}\in U_1^{n+1}$ and $v^{(n+1)}\in V_{\infty}^{n+1}$. In our particular setting, we can observe geometrically, and also verify  by means of the above given thresholding functions, that $u^{(n)}\in U_1^1$ for all $n\in \mathbb{N}$. The detailed calculation can be found in Appendix~\ref{app:A}. It implies that also the limit has to be in the same set, and, therefore, the set of limit points is included in $U_1^1$, which is represented by a piecewise linear path between $0$ and $y$. \\

\begin{figure}
\includegraphics[width=0.49\textwidth]{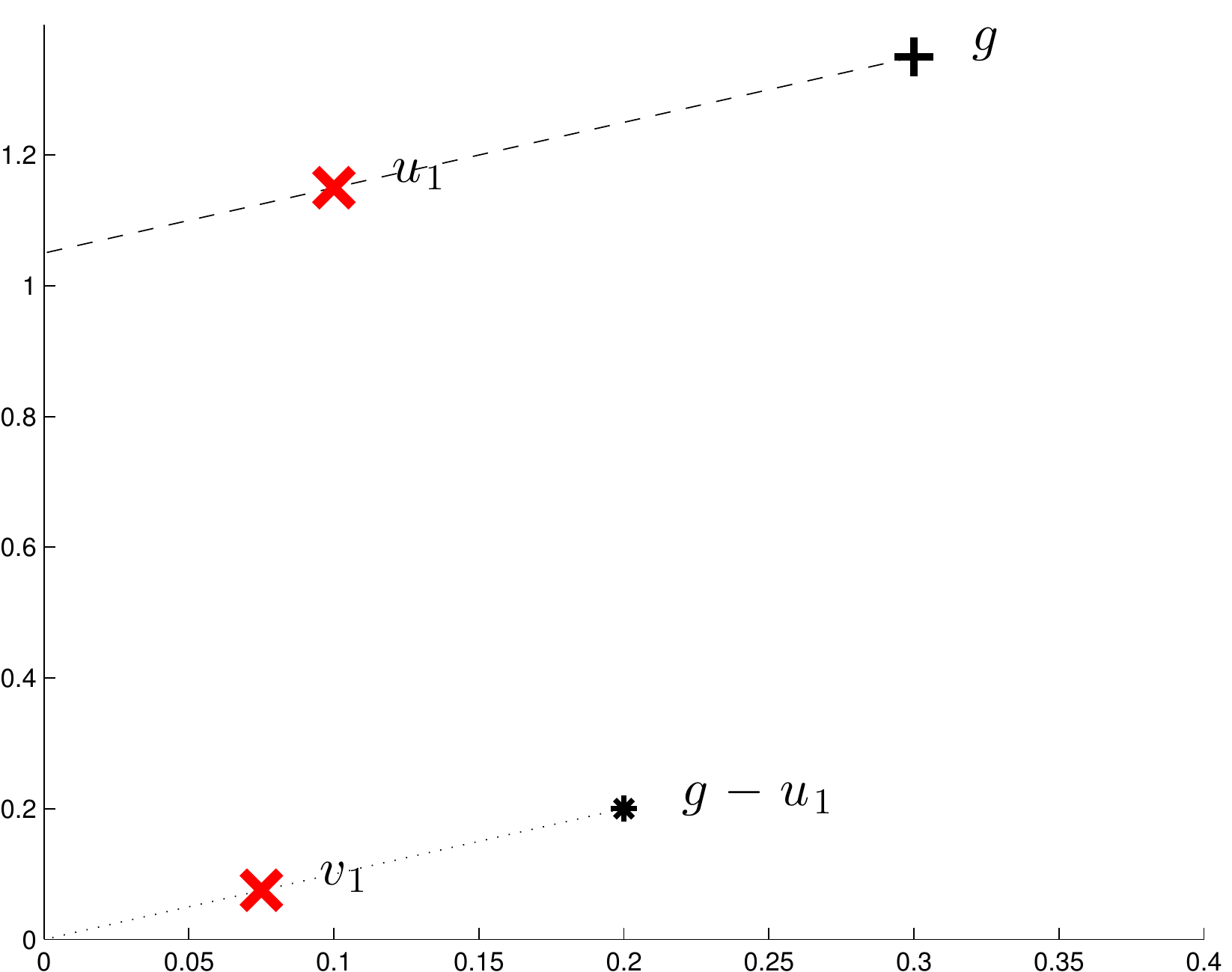}
\includegraphics[width=0.49\textwidth]{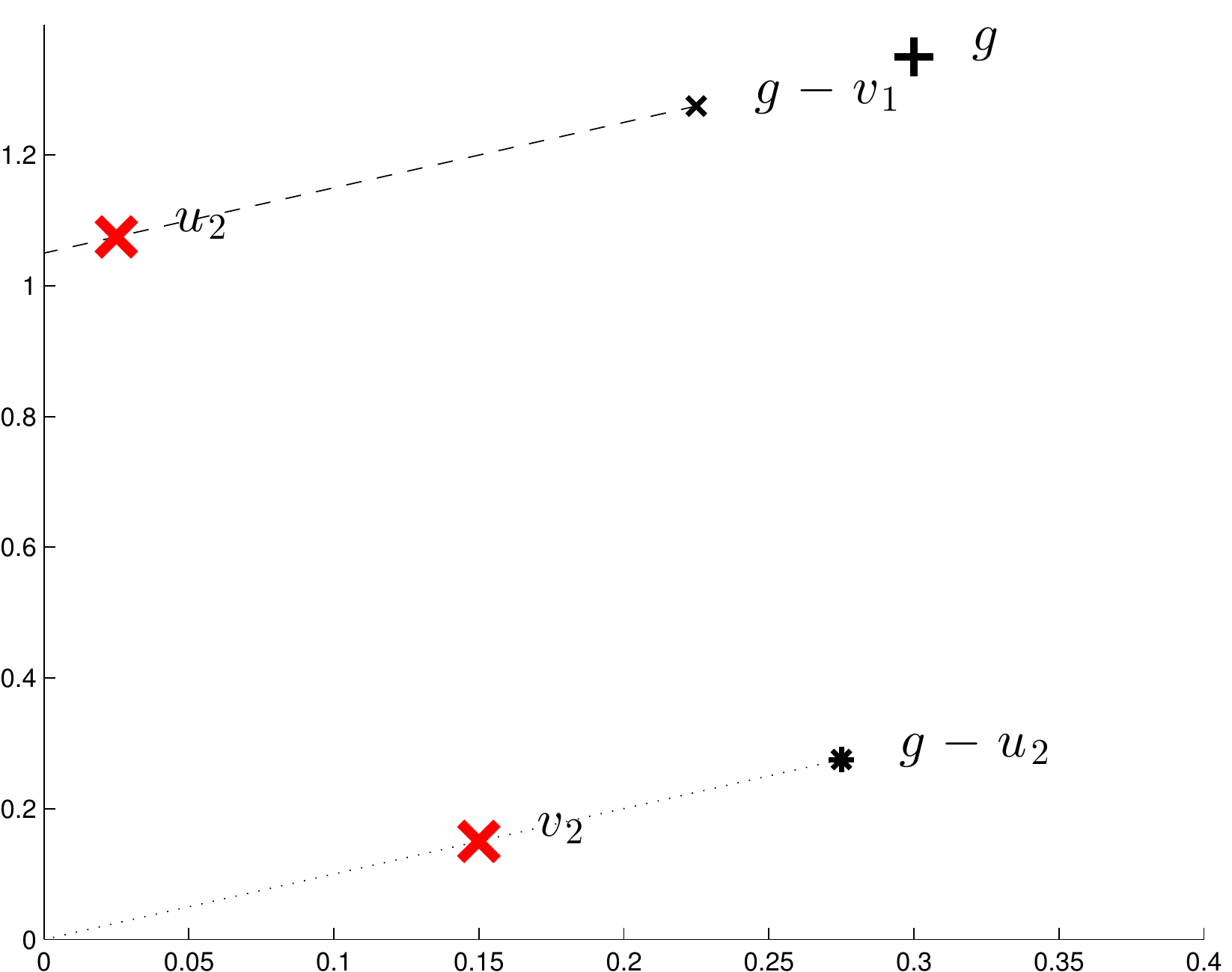}
\includegraphics[width=0.49\textwidth]{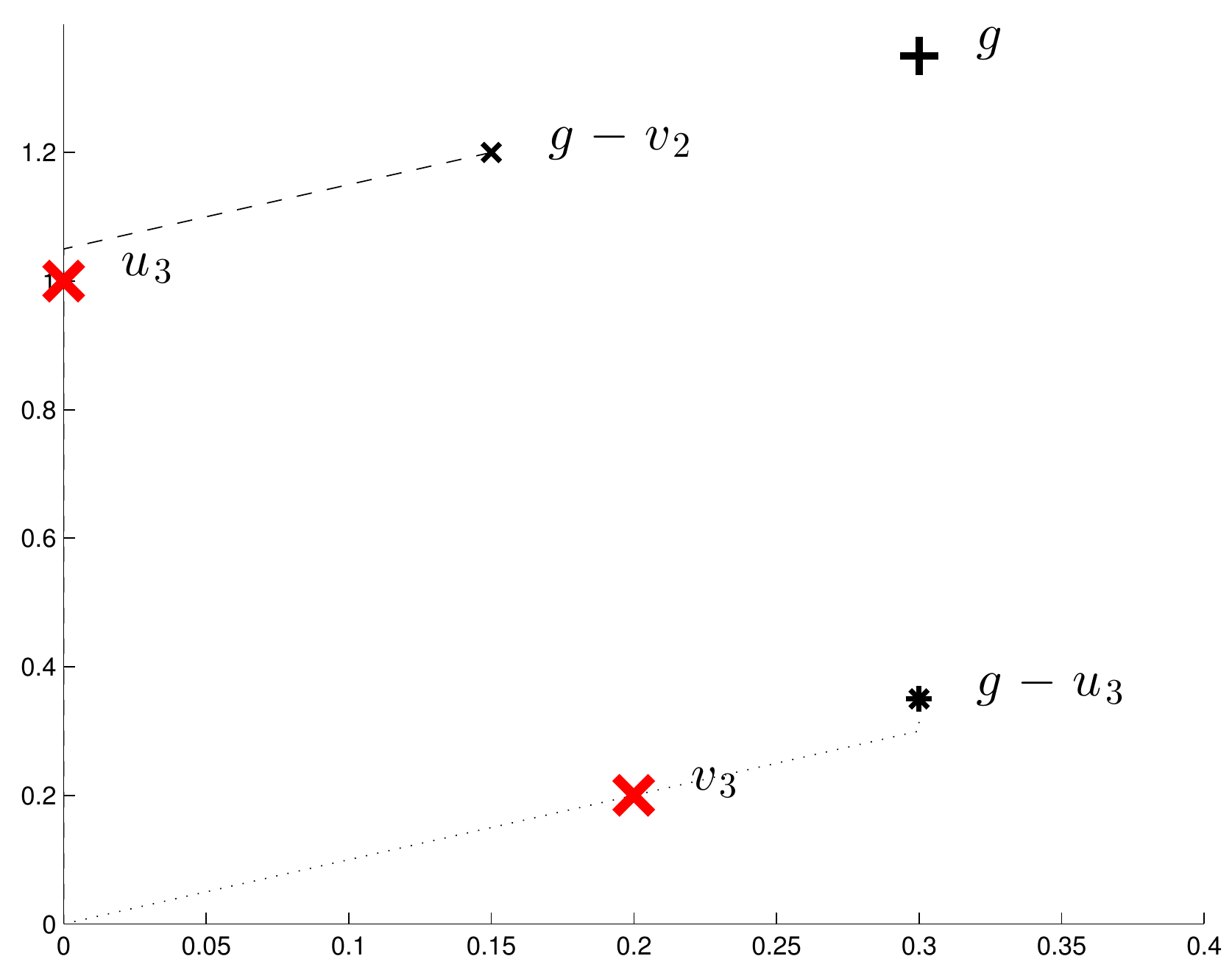}
\includegraphics[width=0.49\textwidth]{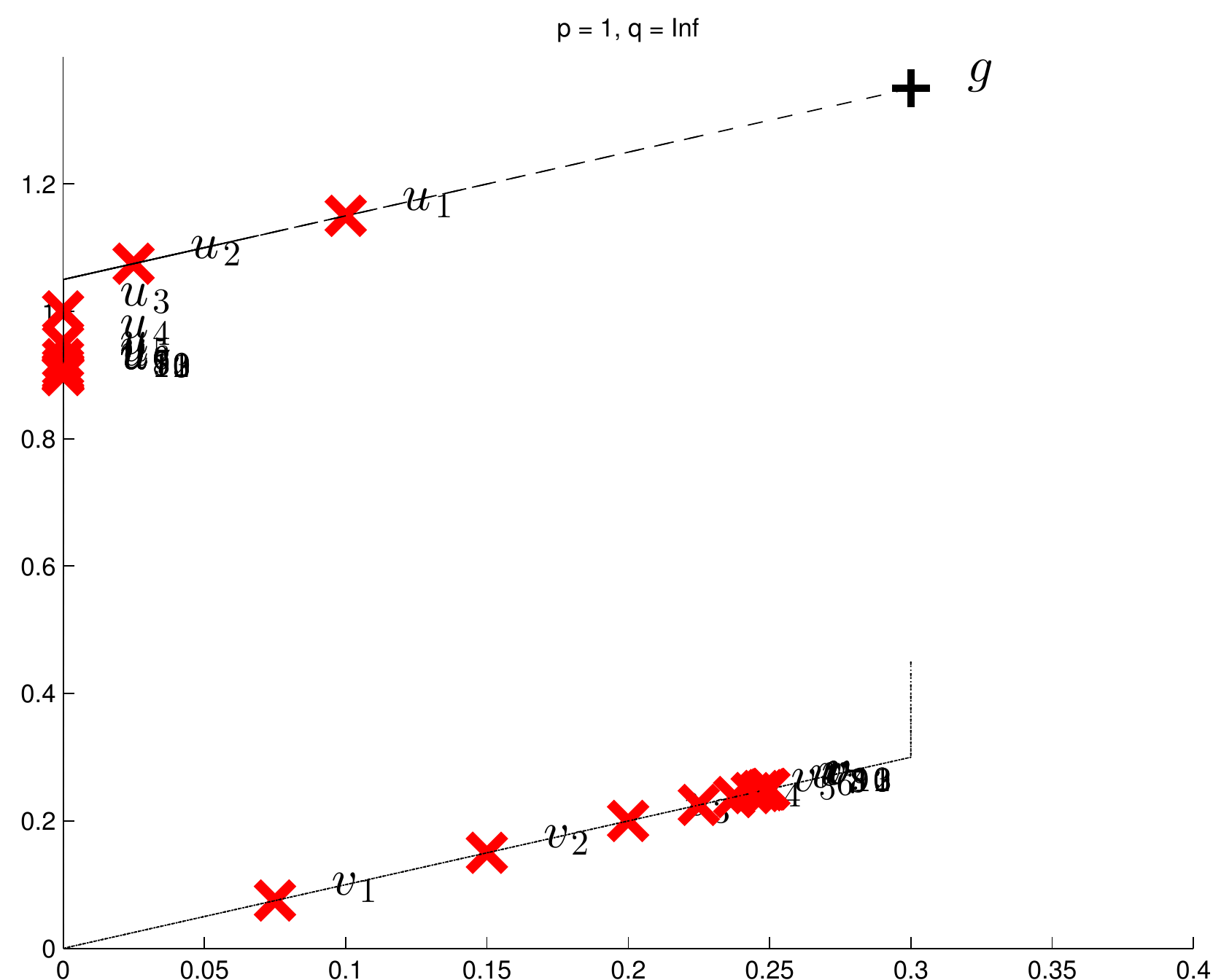}
\caption{Behavior of the algorithm for $p=1$, $q=\infty$, $\alpha=0.4$, $\beta=0.5$. The solution path for $u$ and $v$ is represented by the dashed and dotted line respectively.}
\label{fig:exp_algo}
\end{figure}

While we have a well-shaped convex problem in the case of $p=1$, the situation becomes more complicated for $p < 1$ since multiple minimizers may appear and the global minimizer has to be determined. In Figure~\ref{fig:plotp05fixed} again we visually estimate the regions of solutions $(u^{\dagger},v^{\dagger})$ with $\times$- and $\ast$-markers respectively. In the three plots for $p=0.5$ and $q\in\left\{2,4,\infty\right\}$, the regions of solutions are discretized by showing the solutions for all possible pairs of $\alpha,\beta\in\{0.1\cdot i|i=1,\ldots,40\}$. Compared to the results shown in Figure~\ref{fig:plotp1fixed}, on the one hand, the parallelogram is expanded and on the other hand, two gaps seem to be present in the solution region of $u^{\dagger}$. Such behavior is due to the non-convexity of the problem. As an extreme case, in Figure~\ref{fig:plotp0fixed} we present the same experiments  only putting $p=0$. As one can easily see, the obtained results confirm the above observations. Note that in this limit case setting, the gaps become so large, that the solution area of $u^{\dagger}$ is restricted to 3 vectors only.

\begin{figure}[ht]
\includegraphics[width=0.32\textwidth]{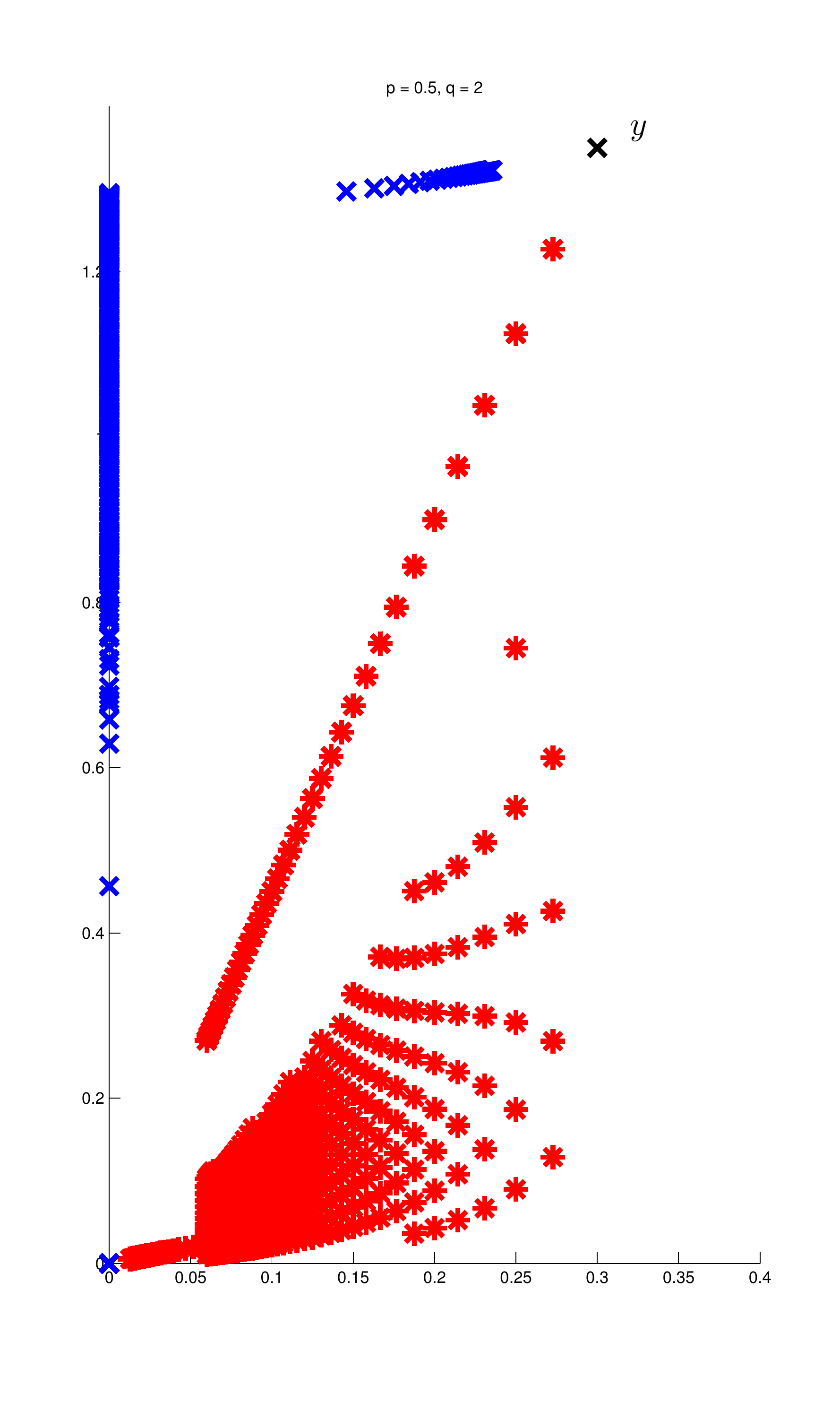}
\includegraphics[width=0.32\textwidth]{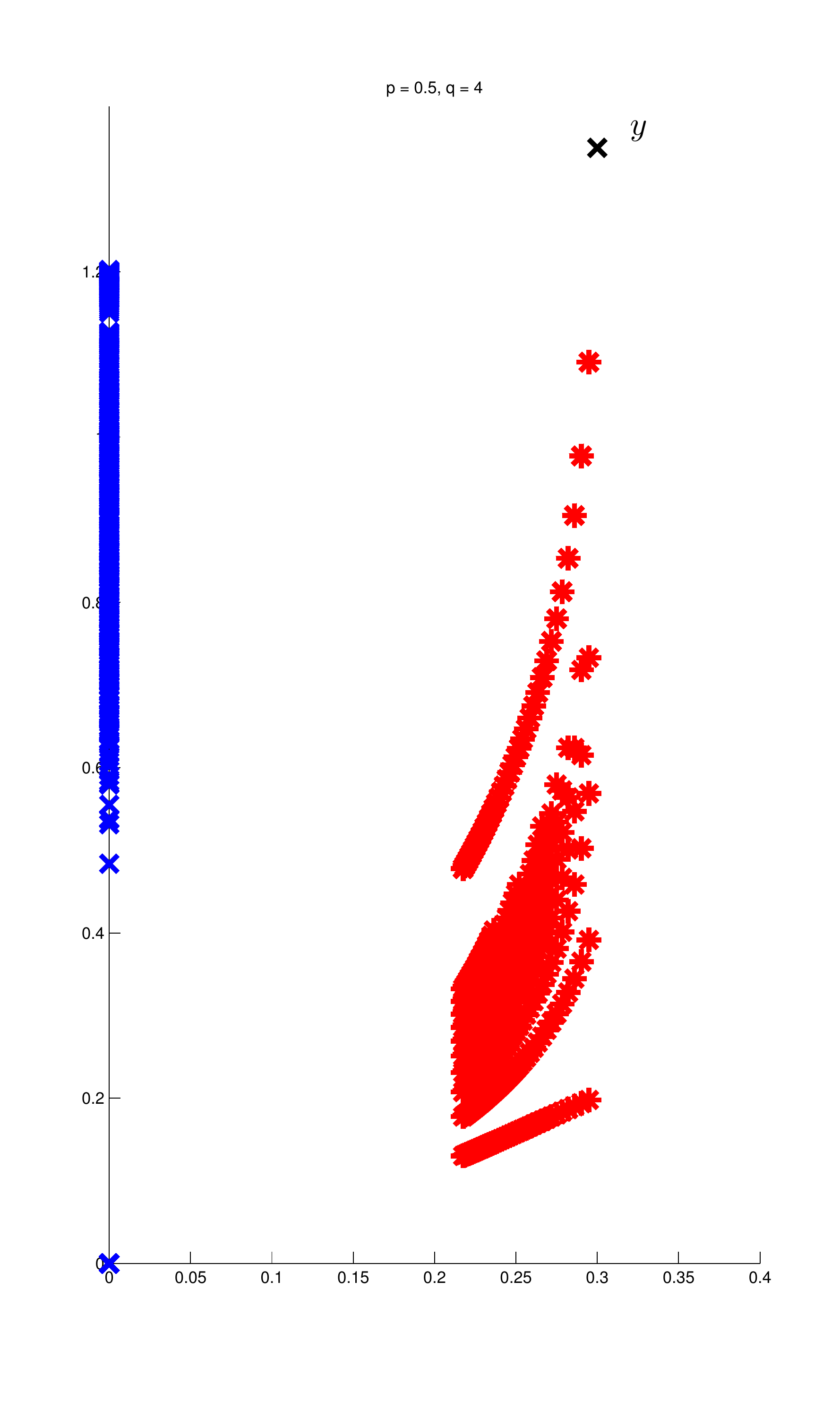}
\includegraphics[width=0.32\textwidth]{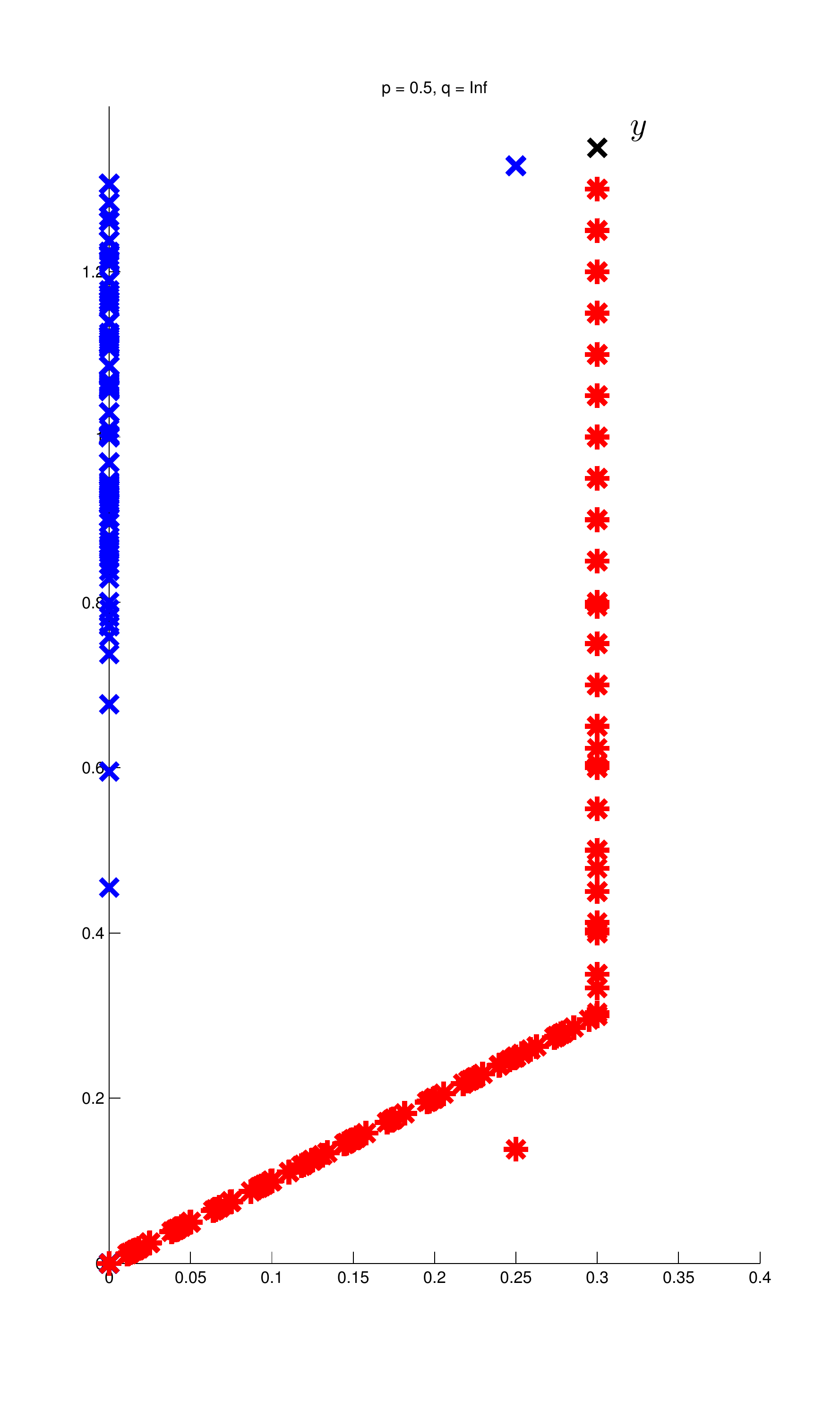}
\caption{Estimated regions of solution for $p=0.5$, and $q\in\left\{2,4,\infty\right\}$.}
\label{fig:plotp05fixed}
\end{figure}

\begin{figure}[ht]
\includegraphics[width=0.32\textwidth]{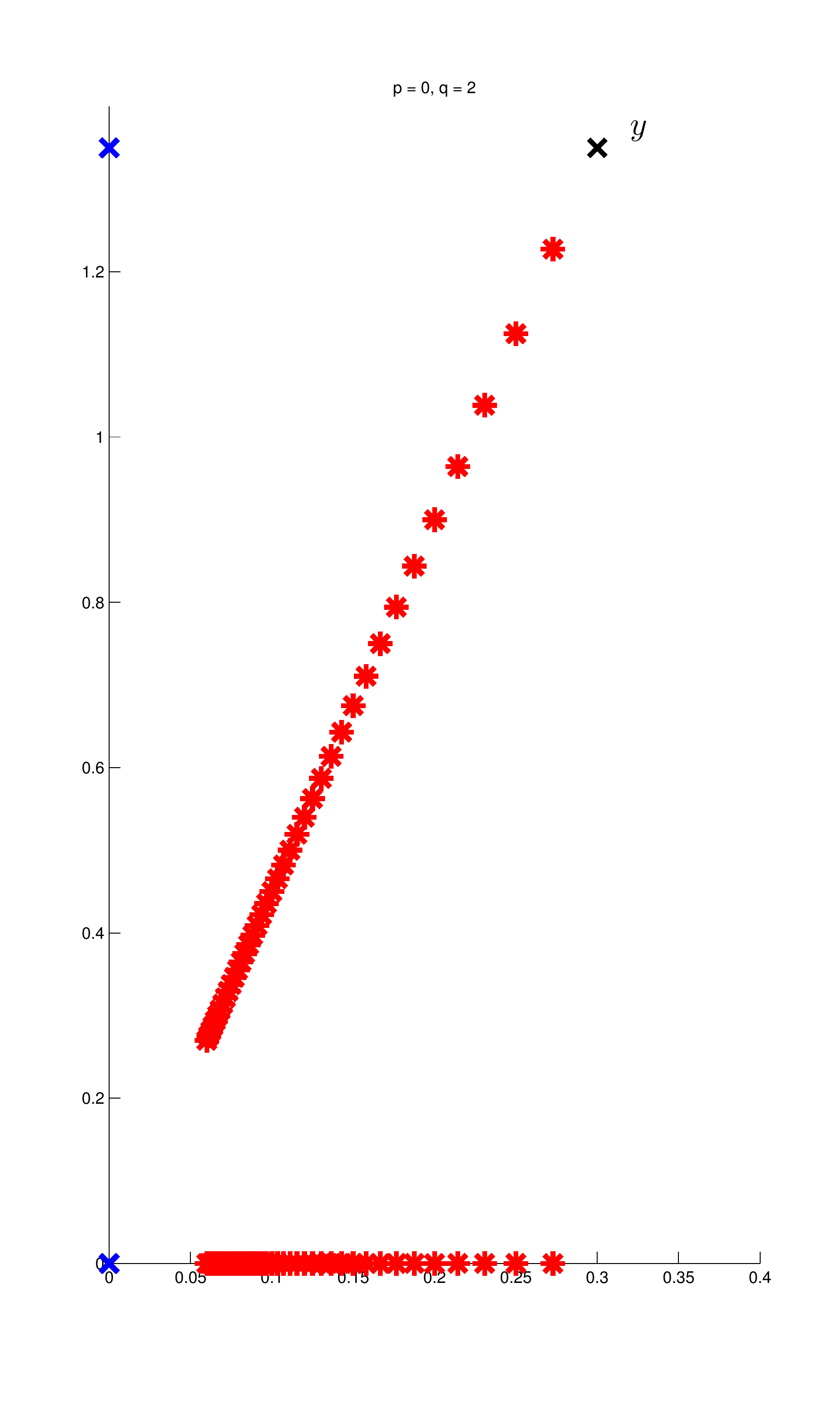}
\includegraphics[width=0.32\textwidth]{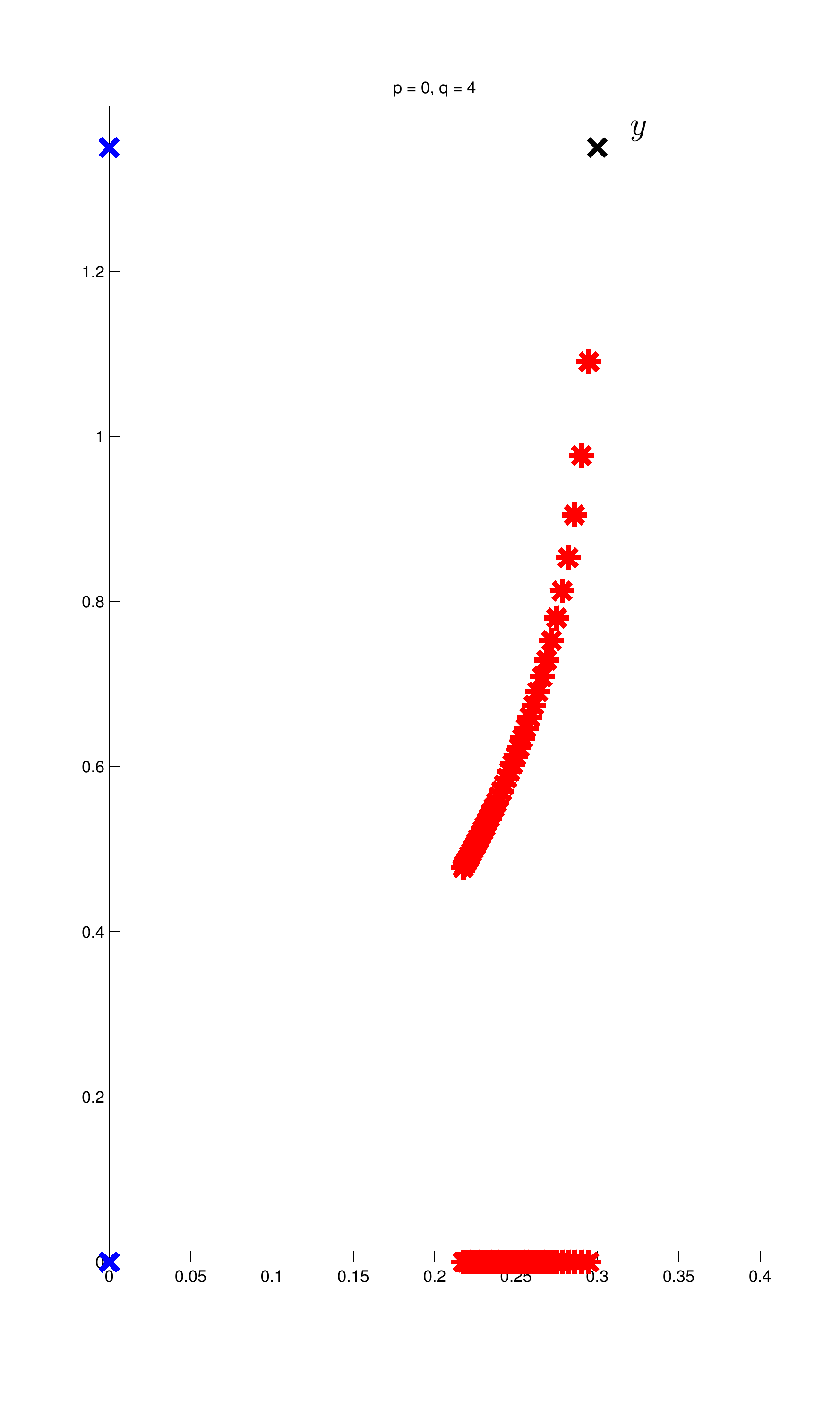}
\includegraphics[width=0.32\textwidth]{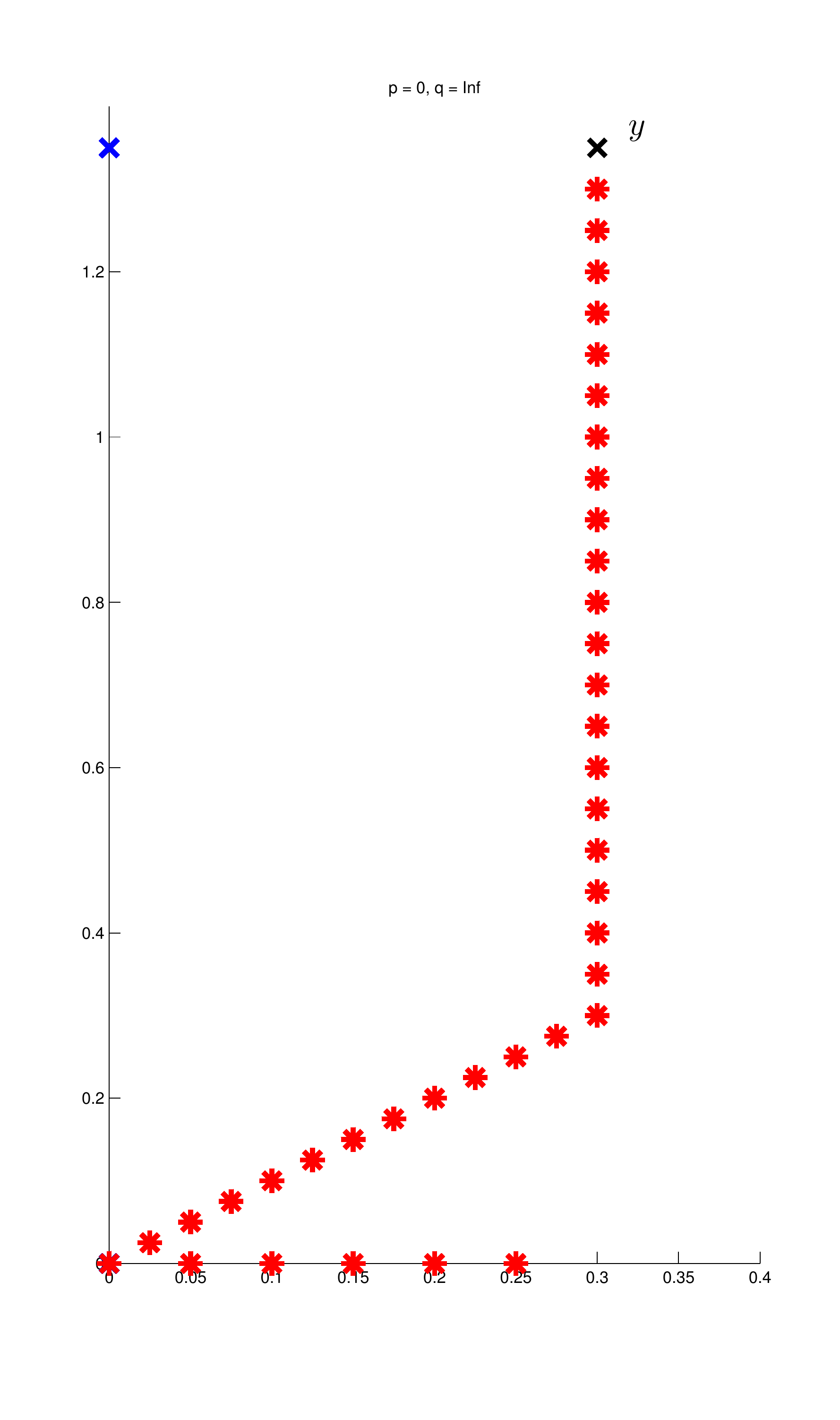}
\caption{Estimated regions of solution for $p=0$, and $q\in\left\{2,4,\infty\right\}$.}
\label{fig:plotp0fixed}
\end{figure}

Owing to these first simple results, we obtain the following three preliminary observations: 
\begin{enumerate}
\item The algorithm promotes a variety of solutions, which form a very particular structure;
\item With decreasing $p$, and increasing $q$, the clustering of the solutions is stronger;
\item The set of possible solutions is bounded by a compact set and, thus, many possible solutions can never be obtained for any choice of $q>2$, $\alpha>0$, and $\beta>0$.
\end{enumerate}

Inspired by this simple geometrical example of a 2D unmixing problem, 
in this paper we  deal with several aspects of optimizations of the type $\mathcal P(\alpha, \beta),$ recasting the unmixing problem (\ref{unmixing_pproblem}) into the classical inverse problems framework, where $T$ may have non-closed range and the observed data is additionally corrupted by noise, obtained by folding additive noise on the signal through  the measurement operator $T$, i.e.,
\[
	y = T u^\dag+ \xi,
\]
where $\xi = T v^\dagger$ and $\| v^\dagger \|_{\ell_2} \leq \eta, ~\eta \in (0,1).$ Due to non-closedness of $\mathcal R (T),$ the solution $u^\dag$  does not depend anymore  continuously on the data and can be reconstructed in a stable way from $y$ only by means of a regularization method \cite{Engl}.

On the basis of these considerations, we assume that the components $u$ and $v$ of the solution are sequences belonging to suitable spaces $\ell_p$ and $\ell_2=\ell_q \cap \ell_2$ respectively, for $0\leq p< 2$ and $2 \leq q <\infty$.
We are interested in the numerical minimization in $\ell_p \times \ell_2$ of the general form of the functionals
\begin{equation}
	\label{eq:funct_general}
	J_{p,q}(u,v) : = \| T(u+v)-y \|^2_{\mathcal H} +  \alpha \| u\|_{\ell_p}^{p} + \left (\beta  \| v\|_{\ell_q}^{q} + \varepsilon \| v\|_{\ell_2}^2 \right ),
\end{equation}
where $\alpha, \beta, \varepsilon \in \mathbb R_+$, and $p,q$ may all be considered regularization parameters of the problem. The parameter $\varepsilon>0 $ ensures the $\ell_2-$coercivity of $J_{p,q}(u,\cdot)$ also with respect to the component $v$. We shall also take advantages of this additional term in the proof of Lemma \ref{lemma:embarace}. 

In this paper we explore the following issues:
\begin{itemize}
\item We propose an iterative alternating algorithm to perform the minimization of $J_{p,q}$ by means of simple iterative thresholding steps; due to the potential non-convexity
of the functional for $0<p<1$ the analysis of this iteration requires a very careful adaptation of several techniques which are collected in different previous papers
of several authors \cite{BD08, BL09, DDD04, Fornasier07} on a single-parameter regularization with a sparsity-promoting $\ell_p-$penalty, $0 \leq p < 2.$  
\item  Thanks to this algorithm, we can explore by means of high-dimensional data analysis methods such as Principle Component Analysis (PCA), the geometry of the computed  solutions for different parameters $\alpha, \beta$ and $p,q.$ Additionally, we carefully estimate their effect in terms of quality of recovery
given $y$ as the noisy data associated via $T$ to sparse solutions affected by bounded noise. Such an empirical analysis shall allow us  to classify  the best recovery parameters for certain compressed sensing problems considered in our numerical experiments.
\end{itemize}
The formulation of multi-penalty functionals of the type \eqref{eq:funct_general} is not at all new, as we shall recall below several known results associated to it. However, the
systematic investigation of the two relevant issues indicated above, in particular, the employment of high-dimensional data analysis methods to classify parameters  and relative solutions are, to our knowledge, the new contributions given by this paper. 
\\



Perhaps as one of the earliest examples of multi-penalty optimization of the type (\ref{eq:funct_general}) in imaging, we may refer to the one seminal work of Meyer \cite{Meyer}, where
the combination of two different function spaces $H^{-1}$ and $BV$, the Sobolev space of distributions with negative exponent $-1$
and the space of bounded variation functions respectively, has been used in image reconstruction towards a proper recovery and separation
of texture and cartoon image components; we refer also to the follow up papers by Vese and Osher \cite{VO40,VO04}.   
Also in the framework of multi-penalty sparse regularization, one needs to look at the early work on signal separation  by means of $\ell_1$-$\ell_1$ norm minimizations in the seminal papers
of Donoho et al. on the incoherency of  Fourier basis and the canonical basis \cite{MR997928, MR1872845}. We mention also
the recent work \cite{MR2994548}, where the authors consider again $\ell_1$-$\ell_1$ penalization
with respect to curvelets and wavelets to achieve separation of curves and point-like features in images. Daubechies and Teschke built on the works
\cite{Meyer,VO40,VO04} providing a sparsity based formulation of the simultaneous image decomposition, deblurring, and denoising \cite{MR2147059},
by using multi-penalty $\ell_1$- and weighted-$\ell_2$-norm minimization.  The work  by Bredies and Holler \cite{BH14} analyses the regularization properties of the total generalized variation functional 
for general symmetric tensor fields and provides convergence for multiple parameters for a special form of the regularization term.
In more recent work \cite{HK14}, the infimal convolution of total variation type functionals for image sequence regularization has been considered, where an image sequence is defined as a function on a three dimensional space time domain. The motivation for such kind of functionals is to allow  suitably combined spatial and temporal regularity constraints.  
We emphasize also the two recent conceptual papers \cite{NP13, fonapeXX}, where the potential of multi-penalty regularization to outperform single-penalty regularization has been discussed and theoretically proven in the Hilbert space settings.

The results presented in this paper are very much inspired not only by the above-mentioned theoretical developments in inverse problems  and imaging communities but also by certain problems in compressed sensing \cite{FoRa13}. In particular, in the recent  paper   \cite{arfopeXX}  a two-step method to separate noise from sparse signals acquired by compressed sensing measurements has been proposed. 
However, the computational cost of the second phase of the procedure presented in  \cite{arfopeXX}, being a non-smooth and non-convex optimization, is too demanding to be performed on problems with realistic dimensionalities. 
At the same  time, in view of the results in  \cite{NP13}, the concept of the procedure described above could be profitably combined into multi-penalty regularization of the type (\ref{eq:funct_general}) with suitable choice of the spaces and parameters and lead to a simple and fast procedure. 
Of course, the range of applicability of the presented approach is not limited to  problems in compressed sensing. Image reconstruction, adaptive optics, high-dimensional learning, and several other problems are fields where we can expect that multi-penalty regularization can be fruitfully used. We expect our paper to be a useful guideline to those scientists in these fields for a proper use of multi-penalty regularization, whenever their problem requires the separation of sparse and non-sparse components.

It is worthwhile mentioning that in recent years  both regularization with non-convex constraints (see \cite{BL09, HinterWu, ItoKunisch, Zarzer, ZarzerRamlau} and references therein) and multi-penalty regularization (\cite{LP_NumMath, NP13}, just to mention a few) have become the focus of interest  in several research communities. While in most of the literature these two directions are considered separately, there have also been some efforts to understand regularization and convergence behavior for multiple parameters and functionals, especially for image analysis \cite{BH14, SL13}.
However, to the best of our knowledge, the present paper is the first one providing an explicit direct mechanism for  minimization of the multi-penalty functional with non-convex and non-smooth terms, and highlighting its improved accuracy power with respect to more traditional one-parameter regularizations.

The paper is organized as follows. In Section 2 we recall the main concepts related to surrogate functionals. Later we borrow these concepts for the sake of minimization of the functional $J_{p,q}$.
In particular, we split the minimization problem  into two subproblems and we propose an associated sequential algorithm based on iterative thresholding. 
The main contributions of the paper are presented in Sections 3 and 4. In Section 3 by greatly generalizing and adapting the arguments in \cite{BD08, BL09, DDD04, Fornasier07} we show that the successive minimizers of the surrogate functionals defined in Section 2 indeed converge to stationary points, which under reasonable additional conditions are local minimizers of the functional (\ref{eq:funct_general}). We first establish weak convergence, but conclude the section by proving that the convergence also holds in norm.
In Section 4 we open the discussion on the advantages and disadvantages of different parameter  $\alpha, \beta$ and $p,q$ choices. Inspired by the 2D illustrations in the introduction, we perform similar experiments but this time for high-dimensional problems and provide an appropriate representation of high-dimensional data by employing statistical techniques for dimensionality reduction, in particular, PCA.  
Moreover, a  performance comparison of the multi-penalty regularization with its one-parameter counterpart is presented and discussed. As a byproduct of the results, in this section 
we provide a guideline for the \enquote{best} parameter choice. 

\section{An iterative algorithm: new thresholding operators}
\subsection{Notation}
We begin this section with a short reminder of the standard notations used in this paper.
For some countable index set $\Lambda$ we denote by $\ell_p = \ell_p(\Lambda),~ï¿½0 \leq p \leq \infty,$ the space of real sequences $u = (u_\lambda)_{\lambda \in \Lambda}$ with (quasi-)norm
\[
	\| u \|_p : = \| u \|_{\ell_p(\Lambda)} : = \left(  \sum_{\lambda \in \Lambda} |u_\lambda |^p \right)^{1/p},~ï¿½0 < p < \infty,
\]
and $\| u \|_{\infty} : = \sup_{\lambda \in \Lambda} |u_\lambda|$ and $\| u \|_{0} : =\# \{ \lambda \in \Lambda~ï¿½|ï¿½~ï¿½u_\lambda \neq 0\}$ as usual. 
 
Until different notice, here we assume $0\leq p <2$ and $2 \leq q \leq\infty.$
Since for $q= \infty$ the penalty term $\| \cdot \|_{\ell_q}^q$ in (\ref{eq:funct_general}) \enquote{equals} infinity, as already done before  we  adopt the convention that $\| \cdot \|_{\ell_q}^q = \| \cdot \|_{\ell_q}$ for $q=\infty.$

More specific notations will be defined in the paper, where they turn out to be useful. 

\subsection{Preliminary lemma}

We want to minimize $J_{p,q}$ by the suitable instances of the following  alternating algorithm:  pick up initial $u^{(0)}, v^{(0)}$ and iterate
\begin{equation}
	\label{carrot1}
 			\left\{
				\begin{array}{l}
					u^{(n+1)} \approx \argmin_u J_{p,q}(u,v^{(n)}) \\
					v^{(n+1)} \approx \argmin_v J_{p,q}(u^{(n+1)},v),
				\end{array}
			\right.\\
\end{equation}
where \enquote{$\approx$} stands for the approximation symbol, because in practice we never perform the exact minimization.
Instead of optimising $J_{p,q}$ directly, let us introduce auxiliary functionals $J_u^s,~J_v^s,$ called the {\it surrogate functionals} of $J_{p,q}$: for some additional parameter $a$ let
\begin{eqnarray}
	J_u^s (u, v; a) & : = & J_{p,q}(u,v) + \| u- a \|^2_2 - \|Tu-Ta\|^2_{\mathcal H}, 	\label{surrogate_u} \\
	J_v^s (u, v; a) & : = & J_{p,q}(u,v) + \| v- a \|^2_2 - \|Tv-Ta\|^2_{\mathcal H}. \label{surrogate_v} 
\end{eqnarray}

In the following we assume that $\| T \| <1.$ This condition can always be achieved by suitable rescaling of $T$ and $y.$ Observe that 
\begin{eqnarray}
	 \| u- a \|^2_2 - \|Tu-Ta\|^2_{\mathcal H} & \geq & C \|u-a\|^2_2, \label{surrogate_ineq_u} \\
	\| v- a \|^2_2 - \|Tv-Ta\|^2_{\mathcal H} & \geq & C \|v-a\|^2_2, \label{surrogate__ineq_v} 
\end{eqnarray}
for $C = (1 - \|T\|)^2.$ Hence, 
\begin{eqnarray}
	J_{p,q}(u,v) & = & J_u^s (u, v; u) \leq J_u^s (u, v; a), 	\label{surrogate_func_u} \\
	J_{p,q}(u,v)  & = & J_v^s (u, v; v) \leq J_v^s (u, v; a) . \label{surrogate_func_v} 
\end{eqnarray}
everywhere, with equality if and only if $u=a$ or $v=a.$ Moreover, the functionals decouple the variables $u_\lambda$ and $v_\lambda$ so that the above minimization procedure reduces to component-wise minimization (see Sections 2.3.1, 2.3.2 below).

Alternating minimization of  (\ref{carrot1}) can be performed by minimizing the corresponding surrogate functionals (\ref{surrogate_u})-(\ref{surrogate_v}). This  leads to the following  sequential algorithm: pick up initial $u^{(0)}$, $v^{(0)}$, and iterate
\begin{equation}
	\label{algorithm_surrogate_func}
\left\{
	\begin{array}{l}
 			\left\{
				\begin{array}{l}
					u^{(n)} = u^{(n,L)} = u^{(n+1,0)} \\
					u^{(n+1,l+1)} = \argmin_{u \in \ell_2(\Lambda)} J_u^s (u, v^{(n)}; u^{(n+1,l)}), \quad l=0, \ldots, L-1 \\
				\end{array}
			\right.\\
			\left\{
				\begin{array}{l}
					v^{(n)} = v^{(n,M)} = v^{(n+1,0)} \\
					v^{(n+1,l+1)} = \argmin_{v \in \ell_2(\Lambda)} J_v^s (u^{(n+1,L)}, v; v^{(n+1,l)}), \quad l=0, \ldots, M-1 \\
				\end{array}
			\right.
	\end{array}.
\right.
\end{equation}


The following lemma provides a tool to prove the weak convergence of the algorithm. It is 
standard when using surrogate functionals (see \cite{BD08, DDD04}), and concerns general real-valued surrogate functionals.
It holds independently of the specific form of the functional $J_{p,q},$ but does rely on the restriction that $\| T \| <1.$

\begin{lemma}
	\label{lemma1_general_surrigate}
	If $J_u^s (u, v; a)$ and $J_v^s (u, v; a)$ are given as in (\ref{surrogate_u}), (\ref{surrogate_v}), and the sequences $(u^{(n)})$, $(v^{(n)})$ are generated by the algorithm in
	(\ref{algorithm_surrogate_func}),  then the sequences $J_{p,q}(u^{(n)}, v^{(n)}),~J_u^s (u^{(n+1)}, v^{(n)}; u^{(n)})$ and $J_v^s (u^{(n+1)}, v^{(n+1)}; v^{(n)})$ are non-increasing as long as $\| T \|<1.$ Moreover, 
	\begin{equation*}
	\| u^{(n+1)} - u^{(n)} \|_2 \rightarrow 0, \quad\quad\quad \| v^{(n+1)} - v^{(n)} \|_2 \rightarrow 0,
\end{equation*}
for $n \rightarrow \infty$. 
\end{lemma}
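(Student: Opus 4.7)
The plan is to exploit the standard surrogate-functional telescoping trick, adapted to the two-variable alternating scheme with inner iterations indexed by $l=0,\dots,L-1$ and $l=0,\dots,M-1$.

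First I would establish the fundamental per-step inequality. Fix $n$ and an inner index $l$. By the minimization defining $u^{(n+1,l+1)}$ we have
\begin{equation*}
J_u^s(u^{(n+1,l+1)}, v^{(n)}; u^{(n+1,l)}) \leq J_u^s(u^{(n+1,l)}, v^{(n)}; u^{(n+1,l)}) = J_{p,q}(u^{(n+1,l)}, v^{(n)}),
\end{equation*}
using the equality case of (\ref{surrogate_func_u}). On the other hand, writing out the surrogate definition and invoking (\ref{surrogate_ineq_u}) gives
\begin{equation*}
J_u^s(u^{(n+1,l+1)}, v^{(n)}; u^{(n+1,l)}) \geq J_{p,q}(u^{(n+1,l+1)}, v^{(n)}) + C\,\|u^{(n+1,l+1)} - u^{(n+1,l)}\|_2^2.
\end{equation*}
Combining these two bounds yields
\begin{equation*}
J_{p,q}(u^{(n+1,l+1)}, v^{(n)}) + C\,\|u^{(n+1,l+1)} - u^{(n+1,l)}\|_2^2 \leq J_{p,q}(u^{(n+1,l)}, v^{(n)}).
\end{equation*}
The analogous inequality holds for the $v$-iteration with constant $C$ and the differences $\|v^{(n+1,l+1)}-v^{(n+1,l)}\|_2^2$.

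Second, I would chain these per-step inequalities. Summing over $l=0,\dots,L-1$ in the $u$-block gives $J_{p,q}(u^{(n+1)},v^{(n)}) \leq J_{p,q}(u^{(n)},v^{(n)})$ and similarly summing in the $v$-block gives $J_{p,q}(u^{(n+1)},v^{(n+1)}) \leq J_{p,q}(u^{(n+1)},v^{(n)})$, establishing monotonicity of $J_{p,q}(u^{(n)},v^{(n)})$. For the surrogate sequences, I chain as follows: by the $u$-minimization at the next outer step, then equality in (\ref{surrogate_func_u}), then the already established $v$- and $u$-monotonicity, and finally (\ref{surrogate_func_u}) again:
\begin{equation*}
J_u^s(u^{(n+2)}, v^{(n+1)}; u^{(n+1)}) \leq J_{p,q}(u^{(n+1)}, v^{(n+1)}) \leq J_{p,q}(u^{(n+1)}, v^{(n)}) \leq J_u^s(u^{(n+1)}, v^{(n)}; u^{(n)}),
\end{equation*}
and analogously for $J_v^s$.

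Third, to obtain the norm convergence, I would use that $J_{p,q}(u^{(n)},v^{(n)})$ is non-increasing and bounded below by $0$, hence convergent, so its successive differences vanish. Summing the per-step inequality over $l$ in the $u$-block gives
\begin{equation*}
C \sum_{l=0}^{L-1} \|u^{(n+1,l+1)} - u^{(n+1,l)}\|_2^2 \leq J_{p,q}(u^{(n)},v^{(n)}) - J_{p,q}(u^{(n+1)},v^{(n)}) \to 0,
\end{equation*}
and a Cauchy–Schwarz estimate $\|u^{(n+1)}-u^{(n)}\|_2^2 \leq L\sum_{l} \|u^{(n+1,l+1)}-u^{(n+1,l)}\|_2^2$ then yields $\|u^{(n+1)}-u^{(n)}\|_2 \to 0$, with the analogous argument for $v$.

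The only subtle point, rather than a serious obstacle, is the correct bookkeeping between the two outer variables when chaining the surrogate monotonicity, since the reference point in $J_u^s$ shifts from $u^{(n)}$ to $u^{(n+1)}$ while the $v$ argument also advances; the trick is to bridge through the equality case $J_u^s(u,v;u)=J_{p,q}(u,v)$ and use the already proven monotonicity of $J_{p,q}$ in the middle of the chain. Everything else is a routine application of (\ref{surrogate_ineq_u})–(\ref{surrogate_func_v}) together with $\|T\|<1$.
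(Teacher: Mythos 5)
Your proposal is correct and follows essentially the same route as the paper: the same per-step inequality obtained by combining the minimality of the inner iterates with the identities (\ref{surrogate_func_u})--(\ref{surrogate_func_v}) and the coercivity bound (\ref{surrogate_ineq_u}), the same telescoping over the inner loops to get monotonicity of $J_{p,q}(u^{(n)},v^{(n)})$, and the same convergence-of-a-bounded-monotone-sequence plus Cauchy--Schwarz argument to conclude that the increments vanish. The only cosmetic difference is that you make the chaining for the surrogate sequences explicit (the paper leaves it implicit in the inner-loop chain); note that for $L>1$ your first link $J_u^s(u^{(n+2)},v^{(n+1)};u^{(n+1)})\leq J_{p,q}(u^{(n+1)},v^{(n+1)})$ should be routed through the intermediate reference points $u^{(n+2,l)}$ rather than invoked as a single minimization, a bookkeeping point the paper glosses over as well.
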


\begin{proof}
%

	Using  (\ref{surrogate_func_u}) we have
	\[
		J_{p,q}(u^{(n)},v^{(n)}) = J_u^s(u^{(n)},v^{(n)};u^{(n)}) = J_u^s(u^{(n,L)},v^{(n,M)};u^{(n+1,0)}).
	\]
	
	Since at this point the proof is similar for both $u$ and $v,$ for the sake of brevity, we consider the case of $J_u^s$ in detail only. By definition of $u^{(n+1,1)}$ and its minimal properties in (\ref{algorithm_surrogate_func}) we have
	\[
		J_u^s(u^{(n,L)},v^{(n,M)};u^{(n+1,0)}) \geq J_u^s(u^{(n+1,1)},v^{(n,M)};u^{(n+1,0)}).
	\]
	An application of (\ref{surrogate_func_u}) gives
	\[
		J_u^s(u^{(n+1,1)},v^{(n,M)};u^{(n+1,0)}) \geq J_u^s(u^{(n+1,1)},v^{(n,M)};u^{(n+1,1)}).
	\]	
	Putting in line these inequalities we get
	\[
	J_{p,q}(u^{(n)},v^{(n)}) \geq J_u^s(u^{(n+1,1)},v^{(n,M)};u^{(n+1,1)}).
	\]
	In particular, from (\ref{surrogate_ineq_u}) we obtain
	\[
	J_{p,q}(u^{(n)},v^{(n)}) - J_u^s(u^{(n+1,1)},v^{(n,M)};u^{(n+1,1)}) \geq C \| u^{(n+1,1)} - u^{(n+1,0)} \|^2_2.
	\]
	By successive iterations of this argument we get
	\begin{eqnarray}
		J_{p,q}(u^{(n)},v^{(n)}) & \geq &J_u^s(u^{(n+1,1)},v^{(n,M)};u^{(n+1,1)}) \nonumber \\
					  & \geq & J_u^s(u^{(n+1,L)},v^{(n,M)};u^{(n+1,L)}) \nonumber \\
					  & = & J_{p,q}(u^{(n+1,L)},v^{(n,M)}), \label{eq:decr_u}
	\end{eqnarray}
	and
	\begin{equation}
		\label{eq:diff_sum_c_u}
	J_{p,q}(u^{(n,L)},v^{(n,M)}) - J_{p,q}(u^{(n+1,L)},v^{(n,M)}) \geq C \sum_{l=0}^{L-1} \| u^{(n+1,l+1)} - u^{(n+1,l)} \|^2_2.
	\end{equation}
	By definition of $v^{(n+1,1)}$ and its minimal properties 
	\[
	J_v^s(u^{(n+1,L)},v^{(n,M)};v^{(n+1,0)}) \geq J_v^s(v^{(n+1,L)},v^{(n,M)};v^{(n+1,1)}).
	\]
	By similar arguments as above we find 
	\begin{equation}
		\label{eq:decr_v}
	J_{p,q}(u^{(n+1,L)},v^{(n,M)}) \geq J_v^s(u^{(n+1,L)},v^{(n+1,M)};v^{(n+1,M)}) = J_{p,q}(u^{(n+1,L)},v^{(n+1,M)}),
	\end{equation}
	and
	\begin{equation}
		\label{eq:diff_sum_c_v}
	J_{p,q}(u^{(n+1,L)},v^{(n,M)}) - J_{p,q}(u^{(n+1,L)},v^{(n+1,M)}) \geq C \sum_{l=0}^{M-1} \| v^{(n+1,l+1)} - v^{(n+1,l)} \|^2_2.
	\end{equation}

From the above discussion it follows that $J_{p,q}(u^{(n)}, v^{(n)}) \geq 0$ is a non-increasing sequence, therefore it converges.
From (\ref{eq:diff_sum_c_u}) and (\ref{eq:diff_sum_c_v}) and the latter convergence we deduce
\begin{eqnarray*}
&\sum_{l=0}^{L-1} \| u^{(n+1,l+1)} - u^{(n+1,l)} \|^2_2 & \rightarrow 0, \\
&\sum_{l=0}^{M-1} \| v^{(n+1,l+1)} - v^{(n+1,l)} \|^2_2 & \rightarrow 0.
\end{eqnarray*}

In particular, by triangle inequality and the standard inequality $(a+b)^2 \leq 2(a^2+b^2)$ for $a,b>0,$ we also  have
\begin{eqnarray*}
	\| u^{(n+1,L)} - u^{(n+1, 0)} \|^2_2 & = & \left \| \sum_{l=0}^{L-1} (u^{(n+1,l+1)} - u^{(n+1,l)}) \right \|^2_2 \leq \left(\sum_{l=0}^{L-1} \| u^{(n+1,l+1)} - u^{(n+1,l)} \|_2\right)^2 \\
	& \leq & C_L \sum_{l=0}^{L-1} \| u^{(n+1,l+1)} - u^{(n+1,l)} \|^2_2 \rightarrow 0.
\end{eqnarray*}
Here $C_L$ is some constant depending on $L$. Analogously we can show that
\begin{eqnarray*}
	\| v^{(n+1,M)} - v^{(n+1, 0)} \|^2_2 \leq C_M \sum_{l=0}^{M-1} \| v^{(n+1,m+1)} - v^{(n+1,m)} \|^2_2 \rightarrow 0.
\end{eqnarray*}
Therefore, we finally obtain
\begin{eqnarray*}
\| u^{(n+1,L)} - u^{(n+1, 0)} \|^2_2 = \| u^{(n+1)} - u^{(n)} \|^2_2 & \rightarrow 0, \\
\| v^{(n+1,M)} - v^{(n+1,0)} \|^2_2 = \| v^{(n+1)} - v^{(n)} \|^2_2 & \rightarrow 0.
\end{eqnarray*}

\end{proof}

\subsection{An iterative algorithm: new thresholding operators}
\label{sec:newthresholdingoperators}
The main virtue/advantage of the alternating iterative thresholding algorithm (\ref{algorithm_surrogate_func}) is the given  explicit formulas for computation of the successive $v^{(n)}$ and $u^{(n)}.$
In the following subsections, we discuss how the minimizers of $J_u^s (u, v; a)$ and $J_v^s (u, v; a)$ can be efficiently computed.

We first observe a useful property of the surrogate functionals.
Expanding the squared terms on the right-hand side of the expression (\ref{surrogate_u}), we get
\begin{eqnarray*}
		J_u^s (u, v; a) & = & \| u - T^* (y-Ta-Tv) -a \|^2_2 +  \alpha \|ï¿½u \|_{p}^p 
		+ \Phi_1\\
					& = & \sum_{\lambda \in \Lambda} [(u_\lambda - [(a - T^*Ta - T^*Tv + T^*y)]_\lambda)^2 + \alpha | u_\lambda|^p +  \Phi_1,
\end{eqnarray*}
and similarly for the expression (\ref{surrogate_v}) and $2\leq q <\infty$
\begin{eqnarray*}
		J_v^s (u, v; a) & = & \| v - T^* (y-Ta-Tv) -a \|^2_2  + \beta \| v \|_q^q + \varepsilon \| vï¿½\|_2^2  + \Phi_2 \\
					& = & \sum_{\lambda \in \Lambda} [(v_\lambda - [(a - T^*Ta - T^*Tu + T^*y)]_\lambda)^2  + \beta | v_\lambda|^q + \varepsilon |v_\lambda|^2] +  \Phi_2,
\end{eqnarray*}
where the terms $\Phi_1 = \Phi_1(a, y, v)$ and $\Phi_2 = \Phi_2(a, y, u)$ depend only on $a,y,v$, and $a,y,u$ respectively. 
Due to the cancellation of the terms involving $\| Tu \|^2_2$ and $\| Tv \|^2_2,$ the variables $u_\lambda,~v_\lambda$ in $J_u^s$ and  $J_v^s$ respectively are decoupled.  Therefore, the minimizers of $J_u^s(u,v;a),~J_v^s(u,v;a)$ for $a$ and $v$ or $u$ fixed respectively, can be computed {\it component-wise} according to
\begin{eqnarray}
	 u^*_\lambda & = & \argmin_{t\in \RR} [(t - [(a - T^*Ta - T^*Tv + T^*y)]_\lambda)^2 + \alpha | t|^p], ~\lambda \in \Lambda, \label{eq:min_surr_cw_u}\\
	 v^*_\lambda &  = &  \argmin_{t\in \RR} [(t - [(a - T^*Ta - T^*T u^* + T^*y)]_\lambda)^2 + \beta | t |^q + \varepsilon |t|^2]. \label{eq:min_surr_cw_v}
\end{eqnarray}

In the case $p=0,~p =0.5,~p=1$ and $q=2$ one can solve (\ref{eq:min_surr_cw_u}), (\ref{eq:min_surr_cw_v}) explicitly; the treatment of the case $q=\infty$ is explained in Remark~\ref{rem:infty};
for the general case $0 < p < 2,~2 < q < \infty$ we derive an implementable and efficient method to compute $u^{(n)}, v^{(n)}$ from previous iterations.

\subsubsection{Minimizers of $J_v^s (u, v; a)$ for $a,~u$ fixed}
	\label{subsec:min_n}

We first discuss the minimization of the functional $J_v^s (u, v; a)$ for a generic $a, u.$ For $2 \leq q< \infty$ the summand in $J_v^s (u, v; a)$ is differentiable in $v_\lambda$, and the minimization reduces to solving the variational equation
\[
	2 (1 + \varepsilon) v_\lambda + \beta q \sgn(v_\lambda) |v_\lambda|^{q-1}  =2 [a + T^*(y - Tu - Ta)]_\lambda.
\]
Setting $\tilde v_\lambda : = (1+\varepsilon) v_\lambda$ and recalling that $| \cdot |$ is $1-$homogenous, we may rewrite the above equality as
\[
	\tilde v_\lambda + \frac{\beta q}2 \frac{\sgn(\tilde v_\lambda) |\tilde v_\lambda|^{q-1}}{(1+\varepsilon)^{q-1}}  =[a + T^*(y - Tu - Ta)]_\lambda.
\]
Since for any choice of $\beta\geq 0$ and any $q>1,$ the real function
\[
	F_{\beta, \varepsilon}^q (x) = x + \frac{\beta q}{2(1+\varepsilon)^{q-1}} \sgn(x) |x|^{q-1}
\] 
is a one-to-one map from $\RR$ to itself, we thus find that the minimizer of $J_v^s (u, v; a)$ satisfies
\begin{equation}
	\label{eq:minim_v}
	 v^*_\lambda = v_\lambda = (1+\varepsilon)^{-1} S_{\beta, \varepsilon}^q (a_\lambda + [T^*(y - Tu - Ta)]_\lambda),
\end{equation}
where $S_{\beta, \varepsilon}^q$ is defined by
\[
	S_{\beta, \varepsilon}^q= (F_{\beta, \varepsilon}^q)^{-1} \quad \mbox{ for } q\geq2.
\]
\begin{remark}
\label{rem:infty}
In the particular case $q= 2$ the explicit form of  the thresholding function $S_{\beta, \varepsilon}^2$ can be easily derived as a proper scaling and we refer the interested reader to \cite{DDD04}. For $q=\infty$ the definition of the thresholding function as 
\[
	\mathbb S_{\beta, \varepsilon}^\infty (x) = \argmin_v \| v- x \|^2_{2} + \beta \|v\|_{\infty} + \varepsilon \|v \|^2_2,
\]
for vectors $v, x \in \RR^M$  was determined explicitly using the polar projection method \cite{fora08}. 
Since in our numerical experiments we consider finite-dimensional sequences and the case $q= \infty$, we recall  here  $\mathbb S_{\beta, \varepsilon}^\infty$ explicitly for the case $\varepsilon=0$ (in finite-dimensions the additional $\ell_2$-term $\varepsilon \|v \|^2_2$ is not necessary to have $\ell_2-$coercivity).

Let $x\in\RR^M$ and $\beta>0.$ Order the entries of $x$ by magnitude such that $|x_{i_1}|\geq |x_{i_2}|\geq \ldots \geq |x_{i_M}|.$\begin{enumerate}
\item If $\|x \|_1 < \beta/2,$ then $\mathbb S_{\beta, \varepsilon}^\infty(x) = 0.$
\item Suppose $\|x \|_1 > \beta/2.$ If $ |x_{i_2}|<|x_{i_1}| -\beta/2,$ then choose $n=1.$ Otherwise, let $n \in \{2, \ldots, M \} $ be the largest index satisfying
	\[
		|x_{i_n}| \geq \frac{1}{n-1} \left(\sum_{k=1}^{n-1} |x_{i_k}| - \frac \beta2 \right).
	\]
	\end{enumerate}
Then
\begin{eqnarray*}
	(\mathbb S_{\beta, \varepsilon}^\infty(x ))_{i_j}& =& \frac{\sgn(x_{i_j})}{n} \left(  \sum_{k=1}^{n} |x_{i_k}| - \frac \beta2 \right), \quad j=1,\ldots, n \\
	(\mathbb S_{\beta, \varepsilon}^\infty(x))_{i_j} & =&x_{i_j}, \quad j = n+1,\ldots, M.
\end{eqnarray*}

These results  cannot be in practice extended to the  infinite-dimensional case because one would need to perform the reordering of the infinite-dimensional vector in absolute values. However, in the case of infinite-dimensional sequences, i.e.,  $x \in \ell_2(\Lambda),$ which is our main interest in the theoretical part of the current manuscript, one can still use the results \cite{fora08} by employing at the first step an adaptive coarsening  approach described in \cite{CoDaDe03}. This approach allows us  to obtain an approximation of an  infinite-dimensional sequence  by its $N-$dimensional counterpart with optimal accuracy order. 

\end{remark}
\subsubsection{Minimizers of $J_u^s (u, v; a)$ for $a,~v$ fixed}

In this subsection, we want to derive an efficient method to compute $u^{(n)}$. In the special case $1 \leq p <2$  the iteration $u^{(n)}$ is given  by soft-thresholdings \cite{DDD04};  for $p=0$ the iteration $u^{(n)}$ is defined by hard-thresholding  \cite{BD08}.
For the sake of brevity, we limit our analysis below to the range $0<p<1,$ which requires a more careful adaptation of the techniques already included in \cite{BD08, DDD04}. The cases $p=0$ and $1 \leq p <2$ are actually minor modifications of our analysis and the one of    \cite{BD08, DDD04}

In order to derive the minimizers of the non-smooth and non-convex functional $J_u^s (u, v; a)$ for generic $a, v,$ we follow the similar approach as proposed in \cite{BL09}, where  a general framework for minimization of non-smooth and non-convex functionals based on a generalized gradient projection method has been considered.

\begin{propos}
	\label{lemma:thr_u}
	
	For $0<p<1$ the minimizer  (\ref{eq:min_surr_cw_u}) for generic $a,v$ can be computed by
	\begin{equation}
		\label{eq:minim_u}
			u^*_\lambda = H_\alpha^p (a_\lambda + [T^*(y-Tv-Ta)]_\lambda),\quad \lambda \in \Lambda,
	\end{equation}
	where the function $H_\alpha^p: \RR \rightarrow \RR$ obeys:
	\[
	H_\alpha^p (x)	= 
				\begin{cases}
					0, & |x| \leq \tau_\alpha, \\
					(F_\alpha^p)^{-1}(x),& |x|\geq\tau_\alpha,
				\end{cases}
	\]
	\[
			|H_\alpha^p(x)| \in \{0\} \cup \{x \geq \gamma_\alpha\}.
	\]
	Here, $(F_\alpha^p)^{-1}(x)$ is the inverse of the function $F_\alpha^p(t) =  t + \frac{\alpha p}{2} \sgn(t) |t|^{p-1},$ which is defined on $\RR_+,$  strictly convex and attains a minimum at $t_\alpha>0,$ and 
	\[
		\gamma_\alpha = (\alpha(1-p))^{1/(2-p)}, \quad \tau_\alpha = F_\alpha^p (\gamma_\alpha) =  \frac{2-p}{2-2p}(\alpha(1-p))^{1/(2-p)}. 
	\]
The thresholding function  $H_\alpha^p$  is continuous except at $|x| = \tau_\alpha,$ where it has a jump discontinuity.
\end{propos}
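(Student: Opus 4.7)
The proposition is a purely component-wise claim: by \eqref{eq:min_surr_cw_u} it suffices, for each $\lambda$ separately, to minimize the scalar function
\[
f(t) := (t - z)^2 + \alpha |t|^p, \quad t \in \RR,
\]
with $z := a_\lambda + [T^*(y - Tv - Ta)]_\lambda$, and to show the minimizer equals $H_\alpha^p(z)$. The whole proof is thus a one-dimensional exercise in non-convex calculus. The first reduction is symmetry: since $|t|^p$ is even, $f(-t;-z) = f(t;z)$, so up to an overall $\sgn(z)$ one may assume $z \geq 0$. For $z > 0$ any candidate $t < 0$ is strictly beaten by $|t|$ because $f(t;z) - f(|t|;z) = 4|t|z > 0$, so the minimizer lies in $[0, \infty)$.

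On the open half-line $(0, \infty)$ one has $f'(t) = 2(t-z) + \alpha p\, t^{p-1} = 2\bigl(F_\alpha^p(t) - z\bigr)$, so stationary points are precisely the positive roots of $F_\alpha^p(t) = z$. The second derivative $(F_\alpha^p)''(t) = \tfrac{\alpha p(1-p)(2-p)}{2}\, t^{p-3} > 0$ shows $F_\alpha^p$ is strictly convex on $(0, \infty)$; combined with $F_\alpha^p(t) \to \infty$ at both endpoints, it attains a unique minimum at some $t_\alpha$ with $t_\alpha^{2-p} = \tfrac{\alpha p(1-p)}{2}$. The equation $F_\alpha^p(t) = z$ therefore admits zero, one, or two positive solutions, and in the two-solution regime $t_1 < t_\alpha < t_2$ the sign of $F_\alpha^p - z$ identifies $t_1$ as a local maximum and $t_2$ as the unique local minimum of $f$ on $(0, \infty)$. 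The global minimum on $[0, \infty)$ is thus either $0$ or $t_2$, according to whether $f(0) = z^2$ or $f(t_2)$ is smaller.

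The decisive step is to locate the threshold $\tau_\alpha$ at which the crossover $f(t_2) = f(0)$ occurs. Imposing simultaneously $(t-z)^2 + \alpha t^p = z^2$ and the stationary equation $z = t + \tfrac{\alpha p}{2} t^{p-1}$, then eliminating $z$ from the first using the second, yields after simplification $t^{2-p} = \alpha(1-p)$, so the critical stationary value is $\gamma_\alpha = (\alpha(1-p))^{1/(2-p)}$, and the corresponding data threshold is $\tau_\alpha = F_\alpha^p(\gamma_\alpha) = \tfrac{2-p}{2-2p}(\alpha(1-p))^{1/(2-p)}$, matching the claim. The elementary inequality $\gamma_\alpha > t_\alpha$, equivalent to $p < 1$, guarantees that the winning local minimizer $t_2$ lies in $[\gamma_\alpha, \infty)$ throughout the regime $z \geq \tau_\alpha$; hence the relevant branch of $(F_\alpha^p)^{-1}$ used in the statement is the one from $[\tau_\alpha, \infty)$ onto $[\gamma_\alpha, \infty)$, which immediately produces the jump at $|z| = \tau_\alpha$ and the inclusion $|H_\alpha^p(z)| \in \{0\} \cup [\gamma_\alpha, \infty)$; restoring the sign via $\sgn(z)$ finishes the general case. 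The principal obstacle is precisely the non-convexity of $f$ when $0 < p < 1$: first-order information alone does not identify the global minimizer, and the discontinuous jump from $0$ to $\gamma_\alpha$ is forced by the algebraic equality $f(t_2) = f(0)$, which must be carried out in closed form to extract $\gamma_\alpha$ and $\tau_\alpha$; everything else is routine once this one-variable landscape is in hand.
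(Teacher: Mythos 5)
Your argument is correct, and it is worth noting that the paper itself does not prove this proposition at all: it simply defers to Lemmas 3.10 and 3.12 of the cited work of Bredies and Lorenz, where the result is obtained within a generalized gradient projection framework for non-smooth, non-convex functionals. Your route is the direct, self-contained one: reduce to the scalar problem $\min_t (t-z)^2+\alpha|t|^p$, use the sign/symmetry reduction to $z\geq 0$, identify the stationary points as roots of $F_\alpha^p(t)=z$ via the strict convexity of $F_\alpha^p$ on $(0,\infty)$, and locate the crossover between the boundary candidate $t=0$ and the interior local minimizer $t_2$ by solving $f(t)=f(0)$ and $F_\alpha^p(t)=z$ simultaneously, which yields exactly $\gamma_\alpha$ and $\tau_\alpha$. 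This is more elementary and arguably more informative than the citation, since it exhibits the jump discontinuity as an explicit algebraic consequence of the tie $f(\gamma_\alpha)=f(0)$ at $z=\tau_\alpha$. Two small points you should tighten: the inequality $\gamma_\alpha>t_\alpha$ is equivalent to $p<2$, not to $p<1$ (it still holds in your regime, so nothing breaks); and having shown that the tie occurs only at $z=\tau_\alpha$, you should say one word about which candidate wins on each side --- e.g.\ that $g(z)=f(t_2(z);z)-z^2$ satisfies $g'(z)=-2t_2(z)<0$ by the envelope theorem, or simply that $f(t_2)\approx\alpha z^p\ll z^2$ for large $z$ --- so that the interior minimizer is selected precisely for $z>\tau_\alpha$ and not on the other side.
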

The proof of the proposition follows the similar arguments presented in Lemmas 3.10, 3.12 in \cite{BL09} and, thus, for the sake of brevity, it can be omitted here.
\begin{remark}
Since we  consider the case $p=0.5$ in our numerical experiments, we present here an explicit formulation of the thresholding function $H_\alpha^{1/2}$, which has been derived recently in~\cite{XuHalfThresholding}. It is given by
\[
	H_\alpha^{1/2} (x)	= 
				\begin{cases}
					0, & |x| \leq \frac{\sqrt[3]{54}}{4}(\alpha)^{2/3} , \\
					(F_\alpha^{1/2})^{-1}(x),& |x|\geq \frac{\sqrt[3]{54}}{4}(\alpha)^{2/3},
				\end{cases}
\]
where $$	
\left(F_{\alpha}^{1/2}\right)^{-1}(x) = \frac{2}{3}x \left( 1+\cos\left(\frac{2\pi}{3}-\frac{2}{3}\arccos\left(\frac{\alpha}{8}\left(\frac{|x|}{3}\right)^{-3/2}\right)\right)\right).
$$
\end{remark}

\begin{figure}[htp]
\begin{center}
\includegraphics[width=0.8\textwidth]{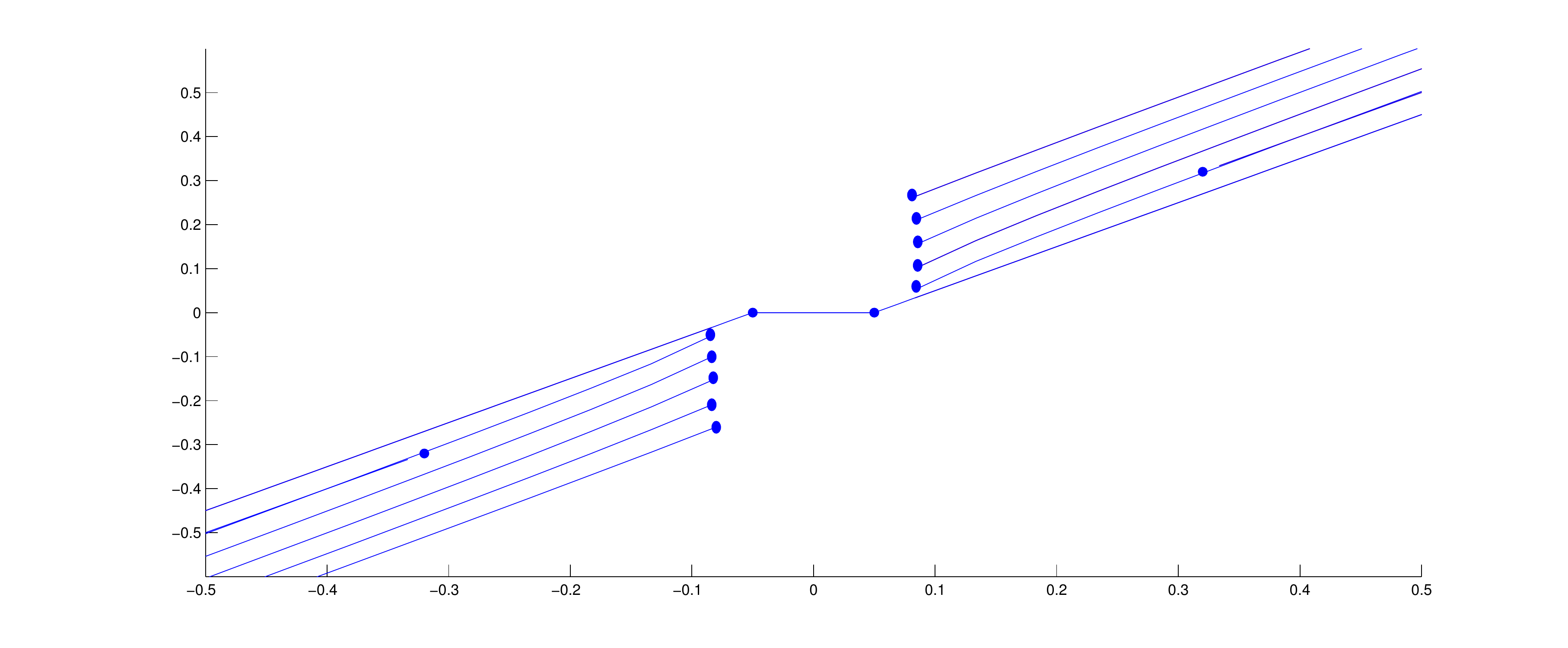}
\end{center}
\caption{The thresholding function $H_\alpha^p$ for $p=0, 0.15, 0.3, 0.45, 0.6, 0.9, 1$ and $\alpha=0.1$.}\label{fig:biribi1}
\end{figure}

For solving the low-level minimization problems of (\ref{algorithm_surrogate_func}), we can use an iterative thresholding algorithm induced by (\ref{eq:minim_v}) and (\ref{eq:minim_u}). Thus, we can reformulate the above algorithm in (\ref{algorithm_surrogate_func}) by
\begin{equation}
	\label{algorithm_thresholding_fun}
\left\{
	\begin{array}{l}
 			\left\{
				\begin{array}{l}
					u^{(n)} = u^{(n,L)} = u^{(n+1,0)}  \\
					u_\lambda^{(n+1,l+1)} =  H_\alpha^p (u_\lambda^{(n+1,l)} + [T^*(y-Tv^{(n,M)}-Tu^{(n+1,l)})]_\lambda), \quad l=0, \ldots, L-1 \\
				\end{array}
			\right.\\
			\left\{
				\begin{array}{l}
					v^{(n)} = v^{(n,M)} = v^{(n+1,0)}  \\
					v_\lambda^{(n+1,l+1)} =(1+\varepsilon)^{-1} S_{\beta,\varepsilon}^q(v_\lambda^{(n+1,l)} + [T^*(y-Tu^{(n+1)}-Tv^{(n+1,l)})]_\lambda), \quad l=0, \ldots, M-1 \\
				\end{array}
			\right.
	\end{array}.
\right.
\end{equation}

\subsubsection{Specifying the fixed points}
As the above algorithm may have multiple fixed points, it is important to analyze those in more detail. At first, we specify what we understand as a fixed point. 
Let us define
the functions
\begin{eqnarray}
	F_u(\bar u, \bar v) & = & \argmin_u J_u^s(u, \bar v; \bar u), 	\label{carrot_functions}\\
	F_v(\bar u, \bar v) & = & \argmin_v J_v^s(\bar u,  v; \bar v). 	\label{carrot_functions2}
\end{eqnarray}
Then we say that $(u^*,v^*)$ is a fixed point for the equations  (\ref{carrot_functions}) and (\ref{carrot_functions2}) if
\begin{equation}
	\label{carrot_fixed_points}
 			\left\{
				\begin{array}{l}
					u^{*} = F_u(u^*, v^*), \\
					v^{*} = F_v(u^*, v^*).
				\end{array}
			\right.\\
\end{equation}
We define $\mathcal F\mathit{ix}$ to be the set 	of fixed points for the equations  (\ref{carrot_functions}) and (\ref{carrot_functions2}).

\begin{lemma}
	\label{lemma:fixed_points}
	Let $(u^*,v^*) \in \mathcal F\mathit{ix}.$
	Define the sets $\Gamma_0 : = \{\lambda \in \Lambda: u^*_\lambda = 0\}$ and $\Gamma_1 : = \{\lambda \in \Lambda: |u^*_\lambda| \geq \gamma_\alpha \}.$ Then
	\begin{equation*}
		[T^*(y - Tv^* - Tu^*)]_\lambda  = \frac {\alpha p}2 \sgn(u^*_\lambda) |u^*_\lambda|^{p-1}, \quad \mbox{ if } \lambda \in \Gamma_1,
	\end{equation*}
		or
	\begin{equation*}
		| [T^*(y - Tv^* - Tu^*)]_\lambda| \leq \tau_\alpha, \quad \mbox{ if } \lambda \in \Gamma_0.
	\end{equation*}
\end{lemma}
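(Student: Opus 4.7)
The plan is to exploit the fixed-point condition $u^* = F_u(u^*,v^*)$ together with the explicit componentwise formula for the minimizer of $J_u^s$ given in Proposition \ref{lemma:thr_u}. Applying equation \eqref{eq:minim_u} with $a = u^*$ and $v = v^*$, the fixed-point identity becomes, for every $\lambda \in \Lambda$,
\begin{equation*}
u^*_\lambda \;=\; H_\alpha^p\bigl(w_\lambda\bigr), \qquad w_\lambda := u^*_\lambda + [T^*(y - Tv^* - Tu^*)]_\lambda,
\end{equation*}
so the whole proof reduces to reading off what the definition of $H_\alpha^p$ forces on $w_\lambda$ in each of the two regimes.

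Next I would handle $\lambda \in \Gamma_1$. Since $|u^*_\lambda| \geq \gamma_\alpha > 0$, the value $H_\alpha^p(w_\lambda)$ is nonzero; by the piecewise description of $H_\alpha^p$ this is possible only when $|w_\lambda| \geq \tau_\alpha$, in which case $u^*_\lambda = (F_\alpha^p)^{-1}(w_\lambda)$. Applying $F_\alpha^p$ to both sides gives $w_\lambda = u^*_\lambda + \tfrac{\alpha p}{2}\sgn(u^*_\lambda)|u^*_\lambda|^{p-1}$, and substituting back the definition of $w_\lambda$ and cancelling the $u^*_\lambda$ term yields the claimed identity on $\Gamma_1$.

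For $\lambda \in \Gamma_0$ I would argue by contradiction with the range structure of $H_\alpha^p$. We have $H_\alpha^p(w_\lambda) = u^*_\lambda = 0$, and since the image of $H_\alpha^p$ lies in $\{0\} \cup \{t : |t|\geq \gamma_\alpha\}$, a nonzero output is ruled out; thus $w_\lambda$ must lie in the zero-region of $H_\alpha^p$, i.e.\ $|w_\lambda| \leq \tau_\alpha$. Because $u^*_\lambda = 0$ reduces $w_\lambda$ to $[T^*(y-Tv^*-Tu^*)]_\lambda$, the desired estimate $|[T^*(y-Tv^*-Tu^*)]_\lambda| \leq \tau_\alpha$ follows immediately.

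The argument is essentially a direct unpacking of Proposition \ref{lemma:thr_u}; the only delicate point is the jump discontinuity at $|w_\lambda| = \tau_\alpha$, where $H_\alpha^p$ can take either the value $0$ or a value of magnitude $\gamma_\alpha$. This ambiguity is harmless here because in both cases the resulting value of $u^*_\lambda$ is consistent with exactly one of the two conclusions of the lemma, so no genuine obstacle arises.
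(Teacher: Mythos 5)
Your proposal is correct and follows essentially the same route as the paper: both unpack the fixed-point identity $u^*_\lambda = H_\alpha^p(u^*_\lambda + [T^*(y-Tv^*-Tu^*)]_\lambda)$ from Proposition \ref{lemma:thr_u} and read off the two cases from the piecewise definition of $H_\alpha^p$, using $F_\alpha^p(u^*_\lambda)=u^*_\lambda+\tfrac{\alpha p}{2}\sgn(u^*_\lambda)|u^*_\lambda|^{p-1}$ on $\Gamma_1$ and the zero-region condition $|w_\lambda|\leq\tau_\alpha$ on $\Gamma_0$. Your remark on the jump discontinuity is a welcome extra precision but does not change the argument.
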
 
\begin{proof}
	By Proposition \ref{lemma:thr_u} 
		\[
		u^*_\lambda = H^p_\alpha (u^*_\lambda + [T^*(y-Tv^*-Tu^*)]_\lambda), ~\forall \lambda \in \Lambda.
	\]
If $u^*_\lambda = 0,$ this equality holds if and only if $| [T^*(y - Tv^* - Tu^*)]_\lambda| \leq \tau_\alpha.$ Similarly for $\lambda \in \Gamma_1$ we get
	\[
		u_\lambda^* = (F_\alpha^p)^{-1}(u^*_\lambda + [T^*(y-Tv^*-Tu^*)]_\lambda),
	\]
 and by definition of $F_\alpha^p$ we have $(F_\alpha^p)^{-1}(u^*_\lambda + \frac{\alpha p}{2} \sgn(u^*_\lambda)|u^*_\lambda|^{p-1}) = u_\lambda^*.$ Thus, the statement of the lemma follows.

\end{proof}



\subsubsection{Fixation of the index set $\Gamma_1$}

To ease notation, we define the operators $\HH^p_\alpha:~\ell_2(\Lambda) \rightarrow \ell_2(\Lambda)$ and $\mathbb S_\tau:~\ell_2(\Lambda) \rightarrow \ell_2(\Lambda)$ by their component-wise action
\begin{eqnarray*}
	(\HH^p_\alpha(u))_\lambda & := & H^p_\alpha(u_\lambda), \\
	( \mathbb S_\tau (v))_\lambda & := & S_{\beta, \varepsilon}^q (v_\lambda),
\end{eqnarray*}
here $\tau = \frac{\beta q}{2(1+\varepsilon)^{q-1}}.$

At the core of the proof of convergence stands the fixation of the \enquote{discontinuity set} during the iteration (\ref{eq:minim_u}), at which point the non-convex and non-smooth minimization with respect to $u$ in (\ref{algorithm_surrogate_func}) is transformed into a simpler  problem.
\begin{lemma}
	\label{lemma:disc_set}
Consider the iterations
\[
	u^{(n+1,l+1)}  = \HH^p_\alpha(u^{(n+1,l)} + T^*(y - Tv^{(n,M)} - Tu^{(n+1,l)}))
\]
and the partition of the index set $\Lambda$ into 
\begin{eqnarray*}
	\Gamma_1^{n,l} & = & \{ \lambda \in \Lambda: |u_\lambda^{(n,l)}| \geq \gamma_\alpha \}, \\
	\Gamma_0^{n,l} & = & \{ \lambda \in \Lambda: u_\lambda^{(n,l)} = 0 \},
\end{eqnarray*}
where $(\tau_\alpha, \gamma_\alpha)$ is the position of the jump-discontinuity of the thresholding function. For sufficiently large $N \in \NN$ (after a finite number of iterations), this partition fixes during the iterations, meaning there exists
$\Gamma_0$ such that for all $n \geq N, \forall ~l \leq L,~\Gamma_0^{n,l} = \Gamma_0$ and $\Gamma_1^{n,l} = \Gamma_1 : = \Lambda \setminus \Gamma_0.$ 
\end{lemma}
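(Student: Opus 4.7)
The plan is to exploit the jump discontinuity of the thresholding function $H_\alpha^p$, which forces every nonzero entry of an iterate to have magnitude at least $\gamma_\alpha$. Combined with the summable-differences estimate already in Lemma~\ref{lemma1_general_surrigate}, this gap rules out coordinate switching once the inner increments are small enough.

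First I would record the consequence of Proposition~\ref{lemma:thr_u}: since $|H_\alpha^p(x)|\in\{0\}\cup[\gamma_\alpha,\infty)$, each coordinate of every image of $\mathbb H_\alpha^p$ is either $0$ or has absolute value at least $\gamma_\alpha$. In particular, for all $n\geq 0$ and all $l\in\{1,\ldots,L\}$ (and hence, via the identification $u^{(n+1,0)}=u^{(n,L)}$, for $l=0$ once the first inner step has been executed), one has $\Gamma_0^{n,l}\cup\Gamma_1^{n,l}=\Lambda$ and $\Gamma_0^{n,l}\cap\Gamma_1^{n,l}=\emptyset$. The partition is therefore completely encoded by $\Gamma_0^{n,l}$, and to prove the lemma it suffices to show that $\Gamma_0^{n,l}$ is eventually constant in both indices.

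Next I would quantify a coordinate switch. If some $\lambda\in\Lambda$ belongs to $\Gamma_0^{n+1,l}$ but to $\Gamma_1^{n+1,l+1}$ (or vice versa), then the above dichotomy forces $|u_\lambda^{(n+1,l+1)}-u_\lambda^{(n+1,l)}|\geq\gamma_\alpha$, so any such switch contributes at least $\gamma_\alpha^2$ to the squared norm $\|u^{(n+1,l+1)}-u^{(n+1,l)}\|_2^2$. Now I would invoke the telescoping estimate (\ref{eq:diff_sum_c_u}) of Lemma~\ref{lemma1_general_surrigate}: since $J_{p,q}(u^{(n)},v^{(n)})$ is monotonically decreasing and nonnegative, the consecutive differences are summable, and in particular
\[
\sum_{l=0}^{L-1}\|u^{(n+1,l+1)}-u^{(n+1,l)}\|_2^2 \;\longrightarrow\; 0 \quad\text{as }n\to\infty.
\]
Choose $N\in\NN$ so large that this sum is strictly less than $\gamma_\alpha^2$ for every $n\geq N$. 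Then each inner increment $\|u^{(n+1,l+1)}-u^{(n+1,l)}\|_2^2<\gamma_\alpha^2$, which by the gap estimate above precludes any switch between $\Gamma_0$ and $\Gamma_1$ inside the inner loop. Because $u^{(n+1,0)}=u^{(n,L)}$ by construction, the partition is also preserved when passing from outer iteration $n$ to $n+1$. Setting $\Gamma_0:=\Gamma_0^{N,L}$ and $\Gamma_1:=\Lambda\setminus\Gamma_0$ then yields the desired stabilization.

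The only delicate point I expect is keeping the inner/outer bookkeeping consistent: one must use the cumulative inner bound (\ref{eq:diff_sum_c_u}) rather than merely $\|u^{(n+1)}-u^{(n)}\|_2\to 0$, because the latter controls the endpoints but not the intermediate iterates that also enter the partitions $\Gamma_0^{n,l},\Gamma_1^{n,l}$. Beyond that, the argument is essentially a clean pigeonhole between the discrete jump of $H_\alpha^p$ and the shrinking step sizes.
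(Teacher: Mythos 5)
Your argument is correct and is essentially the paper's own proof: both exploit the gap $|H_\alpha^p(x)|\in\{0\}\cup[\gamma_\alpha,\infty)$ so that any coordinate switch forces $|u_\lambda^{(n,l+1)}-u_\lambda^{(n,l)}|\geq\gamma_\alpha$, and both rule this out for large $n$ via the vanishing of the inner-loop increments supplied by Lemma~\ref{lemma1_general_surrigate}. Your explicit remark that one must use the cumulative bound (\ref{eq:diff_sum_c_u}) rather than only $\|u^{(n+1)}-u^{(n)}\|_2\to 0$ is a welcome clarification of a point the paper leaves implicit, but it does not change the route.
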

\begin{proof}
By discontinuity of the thresholding function $H^p_\alpha(x),$ each sequence component satisfies
\begin{itemize}
	\item $u_\lambda^{(n,l)}= 0$ if $\lambda \in \Gamma_0^{n,l};$
	\item $|u_\lambda^{(n,l)}| \geq \gamma_\alpha$ if $\lambda \in \Gamma_1^{n,l}.$
\end{itemize}

Thus, $| u_\lambda^{(n,l+1)} - u_\lambda^{(n,l)}| \geq \gamma_\alpha$ if $\lambda \in \Gamma_0^{n,l+1} \cap \Gamma_1^{n,l}$, or $\lambda \in \Gamma_0^{n,l} \cap \Gamma_1^{n,l+1}.$ At the same time, Lemma \ref{lemma1_general_surrigate} implies that 
\[
	|u_\lambda^{(n,l+1)} - u_\lambda^{(n,l)}| \leq \| u^{(n,l+1)} - u^{(n,l)} \|_2 \leq \epsilon,
\]
for sufficiently large $n \geq N(\epsilon).$ In particular, the last inequality implies that $\Gamma_0$ and $\Gamma_1$ must be fixed once $n \geq N(\epsilon).$
\end{proof}

Since $(u^{(n,l)}) \in  \ell_2,$ the set  $\Gamma_1$ is finite. 
Moreover, fixation of the index set $\Gamma_1$ implies that the sequence $(u^{(n)})$ can be considered  constrained to a subset of $\ell_2(\Lambda)$ on which the functionals $J_{p,q}(\cdot, v)$ and $J_u^s(\cdot,v;a)$ are differentiable.



\section{Convergence of the iterative algorithm}
\subsection{Weak convergence}
Given that $H^p_\alpha (u_\lambda^{(n)}) = (F^p_\alpha)^{-1}(u_\lambda^{(n)}),~\lambda \in \Gamma_1,$ after a finite number of iterations, we can use the tools from real analysis to prove that the sequence $(u^{(n)})$ converges to some stationary point. Notice that the convergence of the iterations of the type 
\begin{equation*}
				\begin{array}{l}
					u^{(n+1)} = F_u(u^{(n)}, \bar v), \\
					v^{(n+1)} = F_v(\bar u, v^{(n)}),
				\end{array}
\end{equation*}
to a fixed point $(\bar u^*, \bar v^*)$ for any $(\bar u, \bar v)$ and $F_u,F_v$ given as in (\ref{carrot_functions}) has been extensively discussed in the literature, e.g., \cite{BD08, BL09, DDD04, fora08}. 

\begin{theorem}
	\label{theorem:weak_conv}
	Assume $0 < p < 1$ and $2 \leq q < \infty.$
	The algorithm (\ref{algorithm_surrogate_func}) produces sequences $(u^{(n)}),~(v^{(n)})$ in $\ell_2$ whose weak accumulation points are fixed points of the equations (\ref{carrot_functions})-(\ref{carrot_functions2}).
\end{theorem}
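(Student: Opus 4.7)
The plan is to first establish $\ell_2$-boundedness of the iterates, extract weakly convergent subsequences, invoke the fixation of the discontinuity set from Lemma~\ref{lemma:disc_set} to trivialize the nonlinear thresholding, and finally pass to the limit componentwise in the fixed-point equations. For the boundedness step, Lemma~\ref{lemma1_general_surrigate} shows that $J_{p,q}(u^{(n)},v^{(n)})$ is monotone non-increasing, hence uniformly bounded. Each of the nonnegative terms $\alpha\|u^{(n)}\|_{\ell_p}^p$ and $\varepsilon\|v^{(n)}\|_{\ell_2}^2$ is then bounded; combined with the embedding $\|u\|_{\ell_2}\leq\|u\|_{\ell_p}$ valid for $0<p\leq 2$, this yields $\ell_2$-boundedness of both sequences. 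Extracting a subsequence, $u^{(n)}\rightharpoonup u^*$ and $v^{(n)}\rightharpoonup v^*$ weakly in $\ell_2$, so in particular every coordinate converges pointwise.

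Next, for $n\geq N$ sufficiently large, Lemma~\ref{lemma:disc_set} freezes a partition $\Lambda=\Gamma_0\cup\Gamma_1$ with $u^{(n)}_\lambda=0$ on $\Gamma_0$ and $|u^{(n)}_\lambda|\geq\gamma_\alpha$ on $\Gamma_1$. Since $(u^{(n)})\subset\ell_2$, the set $\Gamma_1$ is finite, so weak and strong convergence coincide there; on $\Gamma_0$ the limit satisfies $u^*_\lambda=0$ by pointwise convergence. On $\Gamma_1$ the map $H^p_\alpha$ acts as the continuous inverse $(F^p_\alpha)^{-1}$, which sidesteps its jump discontinuity entirely. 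The estimates $\|u^{(n+1)}-u^{(n)}\|_2\to 0$ and $\|v^{(n+1)}-v^{(n)}\|_2\to 0$ from Lemma~\ref{lemma1_general_surrigate} further propagate to the inner iterates, so that all $u^{(n+1,l)}$ and $v^{(n+1,l)}$, $l=0,\dots,L$ or $l=0,\dots,M$, share the same weak limits $u^*,v^*$ along the chosen subsequence.

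To pass to the limit componentwise, note that for every fixed $\lambda$, since $T^*T$ is weakly continuous on $\ell_2$ and $e_\lambda\in\ell_2$,
\[
u^{(n+1,l)}_\lambda+[T^*(y-Tv^{(n,M)}-Tu^{(n+1,l)})]_\lambda\longrightarrow u^*_\lambda+[T^*(y-Tv^*-Tu^*)]_\lambda.
\]
On $\Gamma_1$, continuity of $(F^p_\alpha)^{-1}$ transports this convergence through the thresholded iteration (\ref{algorithm_thresholding_fun}); on $\Gamma_0$, both sides vanish and the limit argument stays in $(-\tau_\alpha,\tau_\alpha)$ by continuity of the affine part. Hence in the limit $u^*=\HH^p_\alpha(u^*+T^*(y-Tv^*-Tu^*))$, which by Proposition~\ref{lemma:thr_u} is exactly $u^*=F_u(u^*,v^*)$. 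The analogous argument for $v$, using the genuinely continuous thresholding $S^q_{\beta,\varepsilon}$ for $2\leq q<\infty$, yields $v^*=F_v(u^*,v^*)$, so $(u^*,v^*)\in\mathcal F\mathit{ix}$.

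The main obstacle is the combination of discontinuity and non-convexity of $H^p_\alpha$: weak convergence of $(u^{(n)})$ in $\ell_2$ alone cannot in general be transferred through such a nonlinear map. Lemma~\ref{lemma:disc_set} is precisely the tool that removes this obstruction, by confining the action of $H^p_\alpha$ to the finite set $\Gamma_1$ (where weak and strong convergence agree and $(F^p_\alpha)^{-1}$ is continuous) and to the trivial zero-set $\Gamma_0$. A secondary concern is that $T$ is not assumed compact, but only weak continuity of $T^*T$, automatic for any bounded operator, is needed for the coordinatewise passage to the limit.
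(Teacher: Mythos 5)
Your proposal is correct and follows essentially the same route as the paper: boundedness of the iterates from the monotone decrease of $J_{p,q}$ (Lemma~\ref{lemma1_general_surrigate}) together with $\|u\|_2\leq\|u\|_p$, extraction of weak subsequential limits, fixation of the partition $\Gamma_0\cup\Gamma_1$ via Lemma~\ref{lemma:disc_set}, and componentwise passage to the limit in the thresholding/stationarity equations, concluding with Lemma~\ref{lemma:fixed_points}. The only cosmetic difference is that the paper takes the limit in the first-order variational equations and then invokes Lemma~\ref{lemma:fixed_points}, whereas you pass to the limit directly in the fixed-point form $u=\HH^p_\alpha(\cdot)$; also note that on $\Gamma_0$ the limiting argument need only land in the \emph{closed} interval $|x|\leq\tau_\alpha$, which is all that is required.
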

\begin{proof}
By Lemma \ref{lemma:disc_set} the iteration step
\[
	u_\lambda^{(n+1,l+1)} =  H^p_\alpha (u_\lambda^{(n+1,l)} + [T^*(y-Tv^{(n,M)}-Tu^{(n+1,l)})]_\lambda)
\]
becomes equivalent to the step of the form 
\[
u_\lambda^{(n+1,l+1)} = (F^p_\alpha)^{-1}(u_\lambda^{(n+1,l)}+ [T ^*(y - T v^{(n,M)} - T u^{(n+1,l)})]_\lambda   ),~\lambda \in \Gamma_1,
\]
after a finite number of iterations and $u_{\lambda'}^{(n+1,l+1)} = 0,~\forall \lambda' \in \Lambda \setminus \Gamma_1 = \Gamma_0.$


	From (\ref{eq:decr_u}) and (\ref{eq:decr_v})  we have
	\[
		J_{p,q}(u^{(0)}, v^{(0)}) \geq  J_{p,q}(u^{(n)},v^{(n)}) \geq \alpha \| u^{(n)} \|_p^p \geq \alpha \| u^{(n)} \|_2^p ,
	\]
	and 
	\[
		J_{p,q}(u^{(0)}, v^{(0)}) \geq  J_{p,q}(u^{(n+1)},v^{(n)}) \geq \beta \| v^{(n)} \|^q_q + \varepsilon \| v^{(n)} \|^2_2\geq   \varepsilon \| v^{(n)} \|^2_2. 
	\]	
These mean that $(u^{(n)})$ and $(v^{(n)})$ are uniformly bounded in $\ell_2,$ hence there exist weakly convergent subsequences $(u^{(n_j)})$ and $(v^{(n_j)}).$ Let us denote by $u^\infty$ and $v^\infty$ the weak limits of the corresponding subsequences. For simplicity, we rename the corresponding subsequences $(u^{(n)})$ and $(v^{(n)}).$ Moreover, since the sequence  $J_{p,q}(u^{(n)}, v^{(n)})$ is monotonically decreasing and bounded from below by 0, it is also convergent.

First of all, let us recall that the weak convergence implies component wise convergence, so that $u^{(n)}_\lambda \rightarrow u^\infty_\lambda,~v^{(n)}_\lambda \rightarrow v^\infty_\lambda,$ and $[T^*Tu^{(n)}]_\lambda \rightarrow [T^*Tu^{(\infty)}]_\lambda,~[T^*Tv^{(n)}]_\lambda \rightarrow [T^*Tv^{(\infty)}]_\lambda.$

By definition of $u^{(n+1,L)}$ and $v^{(n+1,M)}$ in (\ref{algorithm_thresholding_fun}), we have for $n$ large enough
\begin{eqnarray}
	0 & = & [-2(u^{(n+1,L-1)} + T^* (y-Tv^{(n)}) - T^*Tu^{(n+1,L-1)})]_\lambda + 2 u_\lambda^{(n+1,L)} \nonumber\\
	&&+ \alpha p \sgn (u_\lambda^{(n+1,L)} ) |u_\lambda^{(n+1,L)}|^{p-1} , ~\lambda \in \Gamma_1,\label{FO_NOC_u}\\
	0 & = & [-2(v^{(n+1,M-1)} + T^* (y-Tu^{(n+1,L)}) - T^*Tv^{(n+1,M-1)})]_\lambda + (2 + \varepsilon) v_\lambda^{(n+1,M)} \nonumber \\
	&& + \beta q \sgn (v_\lambda^{(n+1,M)} ) |v_\lambda^{(n+1,M)}|^{q-1}, ~\lambda \in \Lambda.  \label{FO_NOC_v}
\end{eqnarray}
By taking now the limit for $n \rightarrow \infty$ in (\ref{FO_NOC_u}) and (\ref{FO_NOC_v}), and by using Lemma \ref{lemma1_general_surrigate} we obtain
\begin{eqnarray*}
	0 & = & [-2(u^{\infty} + T^* (y-Tv^{\infty}) - T^*Tu^{\infty})]_\lambda + 2 u_\lambda^{\infty}  + \alpha p \sgn (u_\lambda^{\infty} ) |u_\lambda^{\infty}|^{p-1},~\lambda \in \Gamma_1,\\
	0 & = & [-2(v^{\infty} + T^* (y-Tu^{\infty}) - T^*Tv^{\infty})]_\lambda + (2 + \varepsilon) v_\lambda^{\infty} + \beta q \sgn (v_\lambda^{\infty} ) |v_\lambda^{\infty}|^{q-1}, ~\lambda \in \Lambda. 
\end{eqnarray*}
An application of Lemma \ref{lemma:fixed_points} implies $ (u^*,v^*) = (u^\infty, v^\infty),$ i.e., 
\[
\begin{array}{l}
	u^\infty_\lambda =  H_\alpha^p(u_\lambda^\infty + [T^* (y-Tv^\infty-Tu^\infty)]_\lambda), ~\lambda \in \Gamma_1. \\
	v^\infty_\lambda  =  (1+\varepsilon)^{-1} S_{\beta, \varepsilon}^q (v_\lambda^\infty + [T^* (y-Tv^\infty-Tu^\infty)]_\lambda), ~\lambda \in \Lambda. 
\end{array}
\]
The  argumentation holds true for every subsequences of $(u^{(n)})$ and $(v^{(n)}).$
\end{proof}

\begin{remark}
The case $q=\infty$ would need a special treatment due to lack of differentiability. 
 For simplicity we further assume that $2 \leq q  < \infty.$
\end{remark}

\subsection{Minimizers of $J_{p,q}$}

In this section we explore the relationship between a limit point  $(u^*,~v^*)$ of the iterative thresholding algorithm  (\ref{algorithm_surrogate_func}) and minimizers of the functional (\ref{eq:funct_general}). 
We shall show that under the so-called {\it finite basis injectivity} (FBI) property \cite{BL09} the set of fixed points of the algorithm is a subset of the set of local minimizers. 
We note that here again we provide the proof  only for  the case $0<p<1,$ and we refer to   \cite{BD08} and \cite{DDD04} for the cases $p=0$ and $1\leq p<2$, which follow similarly after minor adaptations.

\begin{theorem}
	\label{theorem:minimizers}
Let $T$ have the FBI property, i.e., $T$ is injective whenever restricted to finitely many coefficients.
Let us denote $\mathcal L$ the set of local minimizers of $J_{p,q}$. Then we have the following inclusion
\[
\mathcal F\mathit{ix} \subset \mathcal L,	
\]
where $\mathcal F\mathit{ix}$ is the set	 of fixed points for equations (\ref{carrot_functions})-(\ref{carrot_functions2}).
\end{theorem}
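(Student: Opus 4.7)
My plan is to fix $(u^*, v^*) \in \mathcal F\mathit{ix}$ and show that
$\Delta J := J_{p,q}(u^* + h, v^* + g) - J_{p,q}(u^*, v^*) \geq 0$
for every perturbation $(h,g) \in \ell_p \times \ell_2$ with $\|h\|_p + \|g\|_2$ sufficiently small. The two first-order conditions that come with being a fixed point are the ones I would exploit. On the set $\Gamma_1 = \{\lambda : |u^*_\lambda| \geq \gamma_\alpha\}$, Lemma~\ref{lemma:fixed_points} gives $[T^*(y - T(u^*+v^*))]_\lambda = (\alpha p/2)\sgn(u^*_\lambda)|u^*_\lambda|^{p-1}$, while on $\Gamma_0 = \Lambda \setminus \Gamma_1$ it gives $|[T^*(y - T(u^*+v^*))]_\lambda| \leq \tau_\alpha$. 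From $v^* = F_v(u^*, v^*)$, inverting $F_{\beta,\varepsilon}^q$ as in Section~2.3.1, I would derive the analogous stationarity $[T^*(y - T(u^*+v^*))]_\lambda = \varepsilon v^*_\lambda + (\beta q/2)\sgn(v^*_\lambda)|v^*_\lambda|^{q-1}$ valid for every $\lambda \in \Lambda$.

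Next, I would decompose $h = h^1 + h^0$ along $\Gamma_1$ and $\Gamma_0$ and expand $\Delta J$ as the fidelity increment $\|T(h+g)\|^2 + 2\langle T(u^*+v^*)-y, T(h+g)\rangle$ plus Taylor expansions of the three penalty terms. Because $\|h\|_\infty \leq \|h\|_p$ on $\ell_p$, small $\|h\|_p$ keeps us away from the cusp of $|t|^p$ on $\Gamma_1$ (where $|u^*_\lambda| \geq \gamma_\alpha$), so $|t|^p$ is $C^2$ on the relevant intervals; on $\Gamma_0$ the change in $\alpha\|\cdot\|_p^p$ is simply $\alpha|h^0_\lambda|^p$; and $|t|^q$ for $q\geq 2$ is $C^2$ and convex. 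Plugging the two stationarity conditions into the fidelity cross-term cancels all linear contributions in $h^1$ and in $g$, leaving four blocks to estimate: (i) on $\Gamma_0$, $\alpha|h^0_\lambda|^p - 2[T^*(\cdots)]_\lambda h^0_\lambda \geq |h^0_\lambda|(\alpha|h^0_\lambda|^{p-1} - 2\tau_\alpha) \geq 0$ as soon as $|h^0_\lambda| \leq (\alpha/(2\tau_\alpha))^{1/(1-p)}$; (ii) on $\Gamma_1$, a concave second-order remainder from $\alpha|t|^p$ bounded in magnitude by $\tilde C_1\|h^1\|_2^2$, with $\tilde C_1$ explicit in terms of $\alpha$, $p$, $\gamma_\alpha$; (iii) a non-negative Taylor remainder of $\beta|t|^q$ plus the exact $\varepsilon\|g\|_2^2$; and (iv) the fidelity square $\|T(h+g)\|^2 \geq 0$.

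The FBI hypothesis enters to absorb block (ii). By Lemma~\ref{lemma:disc_set} the set $\Gamma_1$ is finite, so the restriction of $T$ to $\ell_2(\Gamma_1)$ has a strictly positive smallest singular value $c_1$, and $\|Th^1\|_{\mathcal H}^2 \geq c_1\|h^1\|_2^2$. A Young-type splitting $\|T(h+g)\|^2 \geq (1-\theta)\|Th^1\|^2 - (\theta^{-1}-1)\|T(h^0+g)\|^2$ with $\theta \in (0,1)$ turns the fidelity into a positive contribution $c_1(1-\theta)\|h^1\|_2^2$, and the residual cross term $\|T(h^0+g)\|^2 \leq \|T\|^2(\|h^0\|_2^2 + \|g\|_2^2)$ is absorbed into the strictly positive margins of (i) and the $\varepsilon\|g\|_2^2$ from (iii) once the neighborhood radius is small enough. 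The main obstacle is precisely this closure: since $0<p<1$ makes $|t|^p$ concave on $\Gamma_1$, block (ii) is genuinely negative and must be dominated through a careful quantitative use of the finite-dimensional injectivity of $T|_{\Gamma_1}$ coming from FBI — the same delicate point handled in the single-penalty analyses \cite{BD08, BL09, DDD04}, which I would adapt here to the two-variable setting by balancing the Young weight $\theta$ against the extra $\varepsilon\ell_2$-coercivity on $v$ and the $\Gamma_0$ margin from (i).
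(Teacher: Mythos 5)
Your proposal is correct in substance but takes a genuinely different route. The paper proves the two inequalities $J_{p,q}(u^*,v^*)\leq J_{p,q}(u^*,v^*+dv)\leq J_{p,q}(u^*+du,v^*+dv)$ sequentially: the $v$-step (Proposition \ref{propos:min_v}) is obtained almost for free from the surrogate-functional inequality, with the margin $\|Tdv\|^2_{\mathcal H}$ supplied by the added quadratic rather than by $\varepsilon$, and the $u$-step is a Taylor expansion of the restricted functional $J^{\Gamma_1}_{p,q}$ in which the dangerous cross term $2\langle Tdv,Tdu\rangle$ is controlled by \emph{coupling} the perturbation sizes, $\|dv\|_2=\eta\|du\|_2$ with $\eta\leq (c-p)/(2\|T\|^2)$. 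You instead perform a single joint second-order expansion, cancel all linear terms via the two fixed-point stationarity identities (Lemma \ref{lemma:fixed_points} and its $v$-analogue), and absorb the cross term by a Young splitting of $\|T(h+g)\|^2$, paying for it out of the FBI coercivity on the finite set $\Gamma_1$, the $\varepsilon\|g\|_2^2$ term, and the $\Gamma_0$ margin. What your route buys is uniformity over all perturbation directions $(h,g)$ in a full neighborhood, which the paper's coupled argument only covers on the cone $\|dv\|_2\lesssim\|du\|_2$; what it costs is a quantitatively worse threshold: your balancing of $\theta$ requires $(1-\theta)c_1>\tilde C_1\approx p/2$ \emph{and} $(\theta^{-1}-1)\|T\|^2<\varepsilon$ simultaneously, so the admissible range of $p$ degenerates as $\varepsilon\to 0$, whereas the paper's $p^*=\min\{1,c\}$ is $\varepsilon$-free. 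Both arguments share the same tacit restriction $p<p^*$ that Proposition \ref{propos:min_u} states but the theorem itself omits, and you should state it explicitly; likewise make explicit that $\Gamma_1\supset\supp u^*$ is finite because $u^*\in\ell_2$ with $|u^*_\lambda|\geq\gamma_\alpha$ there, which is what lets FBI deliver a strictly positive $c_1$. With those two points spelled out, your blocks (i)--(iv) close the argument.
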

The rest of the subsection addresses the proof of Theorem \ref{theorem:minimizers}. 
In fact, we first show that the choice of a sufficiently small $p \in (0,1)$ guarantees that an accumulation point $(u^*,v^*)$ is a local minimizer of the functional with respect to $u.$
A main ingredient in the proof of this fact is the FBI property. 
\begin{propos}
	\label{propos:min_u}
Let $T$ satisfy the FBI property. Then there exists $p^* \in (0,1)$ such that for every $0<p<p^*$ every accumulation point  $(u^*,v^*)$ is a  local minimizer of the functional $J_{p,q}$ with respect to $u,$ i.e.,
\begin{equation}
\label{eq:min_u_func}
			J_{p,q} (u^*+du,v^*) \geq  J_{p,q} (u^*,v^*) 
\end{equation}
for any $du \in \ell_2(\Lambda), ~\| du \|_2 \leq \epsilon_1$ for $\epsilon_1$ sufficiently small.
\end{propos}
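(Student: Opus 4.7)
The plan is to expand the difference $J_{p,q}(u^* + du, v^*) - J_{p,q}(u^*, v^*)$ around the accumulation point $(u^*,v^*)$ and exploit the first-order conditions from Lemma~\ref{lemma:fixed_points}, together with the support-fixation result of Lemma~\ref{lemma:disc_set}. Writing $\Delta := J_{p,q}(u^*+du,v^*)-J_{p,q}(u^*,v^*)$, a direct expansion of the squared data-fidelity term yields
\[
\Delta = -2\langle T^*(y-Tu^*-Tv^*), du\rangle + \|T\,du\|_{\mathcal H}^2 + \alpha\bigl(\|u^*+du\|_p^p-\|u^*\|_p^p\bigr).
\]
I split $du = du_{\Gamma_1} + du_{\Gamma_0}$ according to the fixed partition $\Lambda = \Gamma_1 \cup \Gamma_0$ associated to $u^*$, where $|u^*_\lambda|\geq\gamma_\alpha$ on $\Gamma_1$ (a finite set, since $u^*\in\ell_2$) and $u^*_\lambda=0$ on $\Gamma_0$. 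Lemma~\ref{lemma:fixed_points} provides the key first-order information: on $\Gamma_1$, $[T^*(y-Tu^*-Tv^*)]_\lambda = \tfrac{\alpha p}{2}\sgn(u^*_\lambda)|u^*_\lambda|^{p-1}$; on $\Gamma_0$, $|[T^*(y-Tu^*-Tv^*)]_\lambda|\leq \tau_\alpha$.

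On $\Gamma_1$ the functional $t \mapsto |t|^p$ is smooth away from zero, so a second-order Taylor expansion of $\alpha\sum_{\Gamma_1}(|u^*_\lambda+du_\lambda|^p-|u^*_\lambda|^p)$ gives a linear term $\alpha p\sum_{\Gamma_1}\sgn(u^*_\lambda)|u^*_\lambda|^{p-1}du_\lambda$, which cancels exactly against the $\Gamma_1$-portion of $-2\langle T^*(y-Tu^*-Tv^*), du\rangle$ by the identity above. The second-order remainder is bounded below by $-\tfrac{\alpha p(1-p)}{2}\gamma_\alpha^{p-2}\|du_{\Gamma_1}\|_2^2$; using the definition $\gamma_\alpha = (\alpha(1-p))^{1/(2-p)}$ this simplifies to $-\tfrac{p}{2}\|du_{\Gamma_1}\|_2^2$. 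The FBI property of $T$, applied to the finite index set $\Gamma_1$, yields $\|T\,du_{\Gamma_1}\|_{\mathcal H}^2 \geq c_{\Gamma_1}\|du_{\Gamma_1}\|_2^2$ for some $c_{\Gamma_1}>0$, so the $\Gamma_1$-contribution is at least $(c_{\Gamma_1} - p/2)\|du_{\Gamma_1}\|_2^2 + o(\|du_{\Gamma_1}\|_2^2)$. Choosing $p^* < 2 c_{\Gamma_1}$ renders this strictly positive (for $\epsilon_1$ small the higher-order Taylor remainder is absorbed).

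On $\Gamma_0$, $u^*_\lambda = 0$ so the penalty contributes $\alpha\|du_{\Gamma_0}\|_p^p$, while the linear term is bounded in absolute value by $2\tau_\alpha\|du_{\Gamma_0}\|_1$. Because $p<1$ and $|du_\lambda|\leq \|du\|_2\leq \epsilon_1$, componentwise $\alpha|du_\lambda|^p - 2\tau_\alpha|du_\lambda| \geq \tfrac{\alpha}{2}|du_\lambda|^p$ as soon as $\epsilon_1^{1-p}\leq \alpha/(4\tau_\alpha)$; summing yields a positive $\Gamma_0$-contribution of order $\|du_{\Gamma_0}\|_p^p$. The remaining cross-term $2\langle T\,du_{\Gamma_1}, T\,du_{\Gamma_0}\rangle$ from $\|T\,du\|_{\mathcal H}^2$ is controlled by Young's inequality: half of it is absorbed into the strictly positive $(c_{\Gamma_1} - p/2)\|du_{\Gamma_1}\|_2^2$ term, while the other half produces a contribution of order $\|du_{\Gamma_0}\|_2^2 \leq \epsilon_1\,\|du_{\Gamma_0}\|_p^p$ (using the pointwise estimate $|du_\lambda|^2\leq \epsilon_1|du_\lambda|\leq \epsilon_1|du_\lambda|^p$ valid for $|du_\lambda|\leq\epsilon_1\leq 1$ and $p<1$), which is absorbed into $\tfrac{\alpha}{2}\|du_{\Gamma_0}\|_p^p$ by shrinking $\epsilon_1$ further. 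Adding the two parts gives $\Delta\geq 0$.

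The main obstacle is the coupling through the cross-term and the dependence of $c_{\Gamma_1}$ on the (a priori unknown) support of the limit point: one must pick $p^* = p^*(\Gamma_1)$ small enough that both the quadratic coercivity on $\Gamma_1$ survives the negative second-order piece of the $\ell_p$-penalty and the delicate nonconvex balance on $\Gamma_0$ holds uniformly for $du$ of $\ell_2$-norm below a small $\epsilon_1=\epsilon_1(p,\alpha,\Gamma_1)$. The algebraic miracle that $\gamma_\alpha^{p-2}(1-p)=1/\alpha$ is what makes the thresholded support $\Gamma_1$ exactly the right set on which FBI-injectivity can outweigh nonconvexity.
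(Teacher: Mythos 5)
Your proof is correct and follows essentially the same route as the paper's: split $du$ along the fixed partition $\Gamma_1\cup\Gamma_0$, use the fixed-point conditions of Lemma~\ref{lemma:fixed_points} to kill the linear terms on $\Gamma_1$, obtain second-order coercivity there from the FBI property provided $p$ is small (via the identity $\alpha p(1-p)\gamma_\alpha^{p-2}=p$), and let $\alpha\|du_{\Gamma_0}\|_p^p$ dominate the remaining linear and cross terms for $p<1$. The only difference is organizational -- you do one unified expansion with an explicit Young's-inequality treatment of the cross-term, where the paper argues in two stages (local minimality of the restricted functional $F^{\Gamma_1}$, then the comparison $F^{\Gamma_1}(u^*_{\Gamma_1}+du_{\Gamma_1})\leq F(u^*+du)$) -- and your version is, if anything, slightly more explicit about how the $\Gamma_0$ contribution absorbs the coupling.
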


\begin{proof}

In the following, we denote by  $J_{p,q}^{\Gamma_1}$  the restriction of the functional $J_{p,q}$  to $\ell_2(\Gamma_1),$ i.e., 
\begin{equation}
	\label{eq:funct_general_restricted}
	J^{\Gamma_1}_{p,q}(u,v) : = \| T(u+v)-y \|^2_{\mathcal H}+  \alpha\sum_{\lambda \in \Gamma_1} | u_\lambda |^{p} + \left (\beta  \| v\|_{q}^{q} + \varepsilon \| v\|_{2}^2 \right ),
\end{equation}
and by $J_{u,\Gamma_1}^s,~J_{v,\Gamma_1}^s$  the corresponding surrogate functionals restricted to $\ell_2(\Gamma_1)$. 

For the sake of simplicity, let us define
\[
	F(u)  = J_{p,q} (u,v), ~\quad F^{\Gamma_1}(u) =  J_{p,q}^{\Gamma_1} (u,v).
\]
We proceed with the proof of the lemma in two steps:
\begin{itemize}
	\item We show that an accumulation point  $(u^*,v^*)$ is a local minimizer of $F^{\Gamma_1}(u);$
	\item We show that $(u^*,v^*)$ is a local minimizer of $F(u).$
\end{itemize}

Let us for now consider the $u_\lambda^{(n)}$ for $\lambda \in \Gamma_1,$ i.e.,  $|u^{(n)}_\lambda| \geq \gamma_\alpha$.
Since $u^*$ is an accumulation point for $(u^{(n)}),$ by Theorem \ref{theorem:weak_conv} it is also a fixed point. Taking into account the restriction to the set $\Gamma_1,$
by Lemma \ref{lemma:fixed_points} we get
\[
			[T^*(Tv^* + Tu^* - y)]_\lambda  + \frac {\alpha p}2 \sgn(u^*_\lambda) |u^*_\lambda|^{p-1} = 0.
\]
As the functional $F^{\Gamma_1}(u)$ is differentiable on $\ell_2(\Gamma_1)$, we compute the Jacobian
\[
	\nabla F^{\Gamma_1}(u) = 2T^*(Tv + Tu - y)  +  {\alpha p}u |u|^{p-2}, 
\]
for which holds $ \nabla F^{\Gamma_1}(u^*) =0,~v=v^*$. Since the mapping is smooth for all $u_\lambda \neq 0,$ one can check additionally that the Hessian matrix
 \[
	 \nabla^2 F^{\Gamma_1}(u^*) = 2T^*T  -  {\alpha p} (1-p) \diag(|u^*|^{p-2}), 
\]
is actually positive definite for $p < p^*$: For $z$ with $\supp z \subset \supp u^*$ we  have the following estimate
\begin{eqnarray*} 
	\langle z,  \nabla^2 F^{\Gamma_1}(u^*) z \rangle  & = &  2\| Tz \|^2_{\mathcal H} -   {\alpha p}(1-p) \sum_{\lambda \in \Gamma_1} |u^*_\lambda|^{p-2}  z_\lambda^2  \\
								& \geq & (c -  {\alpha p} (1-p) \gamma_\alpha^{p-2}) \| z\|^2_2 = (c-p) \| z\|^2_2,
\end{eqnarray*}
where $c>0$ is the the smallest eigenvalue of $T^*T.$
Therefore, for all  $p \in (0, p^*),~ p^* = \min \{ 1, c \},$ the Hessian is positive definite and thus $u^*$ is a local minimizer of $F^{\Gamma_1}.$

Next we show that $u^*$ is a local minimizer of the functional $F(u)$ without the restriction on the support of $u^*.$	
For the sake of transparency, we shall write the restrictions $u_{\Gamma_1},~u^*_{\Gamma_1}$ and $du_{\Gamma_1}$ meaning that $u_{\Gamma_1},  u^*_{\Gamma_1}, du_{\Gamma_1} \in \ell_2(\Gamma_1)$, and $du_{\Gamma_0}$ meaning that $du_{\Gamma_0} \in \ell_2(\Gamma_0).$ 

The desired statement of the proposition follows if we can show that  $F^{\Gamma_1}(u^*_{\Gamma_1} + du_{\Gamma_1}) \leq F(u^* + du).$  At this point it is convenient to write the functional $F(u^* + du)$ with $\bar y : = y - Tv^*$ as 
\[
	F(u^*+ du) = \|T_{\Gamma_1} (u^*_{\Gamma_1} + du_{\Gamma_1}) + T_{\Gamma_0} du_{\Gamma_0} -\bar y \|^2_{\mathcal H} + \alpha\| u_{\Gamma_1} \|^p_{p} + \alpha \| du_{\Gamma_1} \|_p^p + \alpha \| du _{\Gamma_0} \|^p_p +\beta \| v \|_q^q + \varepsilon  \| v \|_2^2. 
\]
Moreover, the inequality  $F^{\Gamma_1}(u^*_{\Gamma_1} + du_{\Gamma_1}) \leq F(u^* + du)$  can be written as
\[
-\alpha \| du _{\Gamma_0} \|^p_p \leq \|T_{\Gamma_1} (u^*_{\Gamma_1} + du_{\Gamma_1}) + T_{\Gamma_0} du_{\Gamma_0} -\bar y \|^2_{\mathcal H}  -
 \|T_{\Gamma_1} (u^*_{\Gamma_1} + du_{\Gamma_1}) -\bar y \|^2_{\mathcal H}.
\]
By developing the squares, we obtain
\begin{eqnarray*}
 -  \alpha \| du _{\Gamma_0} \|^p_p &\leq&  2 \langle  T_{\Gamma_1} (u^*_{\Gamma_1} + du_{\Gamma_1}) -\bar y, T_{\Gamma_0}  du_{\Gamma_0} \rangle + \|  T_{\Gamma_0} du_{\Gamma_0} \|^2_{\mathcal H} \\
 &=& 2( \langle T_{\Gamma_1} (u^*_{\Gamma_1} + du_{\Gamma_1}), T_{\Gamma_0} du_{\Gamma_0}\rangle - \langle T_{\Gamma_0} du_{\Gamma_0} , \bar y \rangle)  +  \| T_{\Gamma_0} du_{\Gamma_0} \|^2_{\mathcal H},
\end{eqnarray*}
for $\| du _{\Gamma_0} \|_2$ sufficiently small. 
 One concludes by observing that for $p<1$ the term $\| du _{\Gamma_0} \|^p_p$ will always dominate the linear terms on the right-hand side of the above inequality.

\end{proof}

\begin{propos}
	\label{propos:min_v}
Every accumulation point  $(u^*,v^*)$ is a  local minimizer of the functional $J_{p,q}$ with respect to $v,$ i.e.,
\begin{equation}
\label{eq:min_v_func}
			J_{p,q} (u^*,v^*+dv) \geq  J_{p,q} (u^*,v^*) 
\end{equation}
for any $dv \in \ell_2(\Lambda), ~\| dv \|_2 \leq \epsilon_2$ for $\epsilon_2>0$ sufficiently small.
\end{propos}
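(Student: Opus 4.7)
The approach exploits the fact that, unlike the $u$-minimization problem which required the delicate support-fixation argument and the FBI hypothesis in Proposition \ref{propos:min_u}, the $v$-section of the functional is smooth and strictly convex for $2 \leq q < \infty$. Concretely, with $u$ held fixed at $u^*$, the map
\[
v \mapsto \|T(u^* + v) - y\|^2_{\mathcal H} + \beta \|v\|_{\ell_q}^q + \varepsilon \|v\|_{\ell_2}^2
\]
is a sum of a convex quadratic in $v$, the convex function $\sum_\lambda |v_\lambda|^q$ (convex since $q \geq 1$), and the strictly convex term $\varepsilon \|v\|_{\ell_2}^2$; it is globally differentiable on $\ell_2(\Lambda)$ because $|t|^q \in C^1(\RR)$ for $q>1$. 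In particular, any critical point of $J_{p,q}(u^*,\cdot)$ is automatically a (unique) global minimizer.

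The first step of the plan is to identify $v^*$ as such a critical point. By Theorem \ref{theorem:weak_conv} the accumulation point $(u^*,v^*)$ lies in $\mathcal F\mathit{ix}$, so $v^* = F_v(u^*,v^*) = \argmin_v J_v^s(u^*,v;v^*)$. Writing out the Euler equation for this surrogate minimization at $v=v^*$ and noting that the surrogate-specific contribution coming from $\|v-a\|_{\ell_2}^2 - \|T(v-a)\|_{\mathcal H}^2$ vanishes identically when $a = v = v^*$, one obtains
\[
2\bigl[T^*\bigl(T(u^* + v^*) - y\bigr)\bigr]_\lambda + 2\varepsilon\, v^*_\lambda + \beta q \sgn(v^*_\lambda) |v^*_\lambda|^{q-1} = 0, \qquad \lambda \in \Lambda,
\]
which is precisely $\nabla_v J_{p,q}(u^*,v^*) = 0$. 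The same identity can alternatively be extracted directly from (\ref{FO_NOC_v}) in the proof of Theorem \ref{theorem:weak_conv} after passing to the limit $n\to\infty$.

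The second step is then immediate: a convex, differentiable function whose gradient vanishes at a point attains its global minimum there, and strict convexity moreover makes this minimizer unique. Consequently $J_{p,q}(u^*, v^* + dv) \geq J_{p,q}(u^*, v^*)$ for \emph{every} $dv \in \ell_2(\Lambda)$, not just for $\|dv\|_{\ell_2}\leq \epsilon_2$, so (\ref{eq:min_v_func}) is in fact established in a considerably stronger \emph{global} form.

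No genuine obstacle arises here; the only points requiring care are the algebraic cancellation of the surrogate terms when $a=v=v^*$ and the verification that $|t|^q$ is $C^1$ across $t=0$ for $q>1$, so that the componentwise gradient condition is well defined even at indices $\lambda$ with $v^*_\lambda = 0$. In particular, no support-fixation argument and no FBI hypothesis on $T$ are needed, reflecting the absence of the non-convexity and jump-discontinuity that complicate the $u$-side.
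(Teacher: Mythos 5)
Your argument is correct and rests on the same two ingredients as the paper's proof: the first-order condition $2[T^*(T(u^*+v^*)-y)]_\lambda+2\varepsilon v^*_\lambda+\beta q\,\sgn(v^*_\lambda)|v^*_\lambda|^{q-1}=0$ extracted from the fixed-point property, and the convexity of $v\mapsto J_{p,q}(u^*,v)$ for $q\geq 2$, which together give the global (not merely local) inequality. The only cosmetic difference is that you argue directly on $J_{p,q}(u^*,\cdot)$ via the gradient-vanishing-at-a-critical-point-of-a-convex-function principle, whereas the paper routes the same convexity (in the form of the subgradient inequality $|v^*_\lambda+dv_\lambda|^q-|v^*_\lambda|^q\geq q\,\sgn(v^*_\lambda)|v^*_\lambda|^{q-1}dv_\lambda$) through the surrogate functional $J_v^s(u^*,\cdot\,;v^*)$ and then drops the nonnegative term $\|dv\|_2^2-\|Tdv\|_{\mathcal H}^2$; this sidesteps any discussion of Fr\'echet differentiability of $\|\cdot\|_q^q$ on $\ell_2(\Lambda)$, which your version implicitly assumes.
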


\begin{proof}
First of all, we claim that $J_v^s(u^*,v^* + dv; v^*) - J_v^s(u^*,v^*; v^*)  \geq  \| dv\|^2_2.$
Indeed, a direct calculation shows that
	\begin{eqnarray*}
		J_v^s(u^*,v^* + dv; v^*) - J_v^s(u^*,v^*; v^*)  & = & \| T (u^* + v^* + dv) - y\|^2_{\mathcal H} + \beta \|ï¿½v^* + dv \|^q_q + \varepsilon \|ï¿½v^* + dv\|^2_2 \\
		&& + ï¿½ \| dv\|^2_2 - \| Tdv\|^2_{\mathcal H} 
		- \| T (u^*+ v^*) - yï¿½\|^2_{\mathcal H} - \beta \|ï¿½v^* \|^q_q - \varepsilon \| v^* \|^2_2  \\
		& = &\| dv\|^2_2  + \beta  \sum_{\lambda \in \Lambda} (|ï¿½v^*_\lambda + dv_\lambda |^q - |ï¿½v^*_\lambda |^q) + \varepsilon \sum_{\lambda \in \Lambda} (|ï¿½v^*_\lambda + dv_\lambda |^2 - |ï¿½v^*_\lambda |^2)\\
		& + &  \sum_{\lambda \in \Lambda} dv_\lambda [T^* (T(u^* + v^*) - y)]_\lambda \\
		& \geq & (1+\varepsilon)\| dv\|^2_2  + \sum_{\lambda \in \Lambda} dv_\lambda ( [T^* (T(u^* + v^*) - y)]_\lambda\\
		&  + & \beta  q \sgn(v_\lambda^*) |v_\lambda^*|^{q-1} + 2\varepsilon   v_\lambda^* ). \\
	\end{eqnarray*}

Since by (\ref{eq:min_surr_cw_v}) the term 
\[
	\langle [T^*(T(u^*+v^*) - y)]_\lambda + \beta q \sgn(v^*_\lambda)|v^*_\lambda|^{q-1} + 2 \varepsilon v^*_\lambda, t_\lambda - v^*_\lambda \rangle 
\]
vanishes,  the above claim follows. By using the above claim, we get that
\begin{eqnarray*}
		 J_{p,q}(u^*, v^*+dv) & = &  J_v^s (u^*, v^*+dv; v^*)  - \|dv \|^2_2 + \| Tdv \|^2_{\mathcal H} \\
					  & \geq &  J_v^s (u^*, v^*+dv; v^*) - \|dv^* \|^2_2  \\
					  & \geq &J_v^s (u^*, v^*; v^*)  =  J_{p,q}(u^*,v^*).
	\end{eqnarray*}

\end{proof}

%
%
With the obtained results we are now able to prove Theorem~\ref{theorem:minimizers}. In particular, we shall show that $J_{p,q} (u^*, v^*) \leq J_{p,q} (u^*, v^* + dv) \leq J_{p,q} (u^* + du, v^* + dv).$ The first inequality has been proven in Proposition \ref{propos:min_v}. We  only need to show the second inequality.
 
\begin{proof}[Proof of Theorem \ref{theorem:minimizers}]
Similarly as in Proposition \ref{propos:min_u} we proceed in two steps. First we prove that  $J^{\Gamma_1}_{p,q}(u^*, v^* + dv) \leq J_{p,q}^{\Gamma_1} (u^* + du, v^* + dv).$   Since the functional $J^{\Gamma_1}_{p,q}$ is differentiable, a Taylor expansion at $(u^*, v^* + dv)$ yields
\[
J_{p,q}^{\Gamma_1} (u^* + du, v^* + dv) = J^{\Gamma_1}_{p,q}(u^*, v^* + dv)  + \nabla J^{\Gamma_1}_{p,q}(u^*, v^* + dv)  du + \frac12 du \nabla^2 J^{\Gamma_1}_{p,q}(u^*, v^* + dv) du.
\] 
Due to Proposition \ref{propos:min_u},  $\nabla F^{\Gamma_1}(u^*) = \nabla J^{\Gamma_1}_{p,q}(u^*, v^*) = 0$ and the term $\nabla J^{\Gamma_1}_{p,q}(u^*, v^* + dv) = 2 T^*T dv \approx 0.$ Thus, 
\[
-2 \| T \|^2 \| dv \|_2 \| du\|_2 \leq \nabla J^{\Gamma_1}_{p,q}(u^*, v^* + dv)  du =  2 \langle T dv, Tdu  \rangle \leq 2 \| T \|^2 \| dv \|_2 \| du\|_2.
\]
Moreover, 
\[
		 \nabla^2 J^{\Gamma_1}_{p,q}(u^*, v^* + dv) = \nabla^2  J^{\Gamma_1}_{p,q}(u^*, v^*) + \xi(\| dv \|^2_2),
\]
where $\nabla^2  J^{\Gamma_1}_{p,q}(u^*, v^*) \geq 0$ due to the local convexity of the functional $J^{\Gamma_1}_{p,q}.$
Choosing $\eta \leq \frac{c-p}{2\|T\|^2}$ and $\| dv \|_2 = \eta \| du\|_2$, and combining the above estimates together, we get  
\begin{align*}
	\nabla J^{\Gamma_1}_{p,q}(u^*, v^* + dv)  du &+ \frac12 du \nabla^2 J^{\Gamma_1}_{p,q}(u^*, v^* + dv) du \geq -2 \| T \|^2 \| dv \|_2 \| du\|_2 + (c-p) \| du\|^2_2 \\
						&\geq [(c-p) - 2 \eta \|T\|^2] \|du\|^2_2 \geq 0,
\end{align*}
and thus, $J_{p,q}^{\Gamma_1} (u^* + du, v^* + dv)  \geq J_{p,q}^{\Gamma_1} (u^*, v^* + dv). $ 
The second part of the proof is concerned with the inequality $J_{p,q} (u^* + du, v^* + dv) \geq J_{p,q} (u^*, v^* + dv)$, which works in a completely analogous way to the second part of the proof of Proposition  \ref{propos:min_u}  and is therefore omitted here.

\end{proof}
\subsection{Strong convergence} 

In this subsection we show how the weak convergence established in the preceding subsections can be strengthened into norm convergence, also by a series of lemmas. Since the distinction between weak and strong convergence makes sense only when the index set $\Lambda$ is infinite, we shall prove the strong convergence only for the sequence $v^{(n)},$ since the iterates $u^{(n)}$ are constrained to the finite set after a finite number of iterations. 

For the sake of convenience, we introduce the following notation. 
\begin{align*}
\mu^{n+1} = v^{(n+1)} - v^*,  & \quad\quad \mu^{n+1/2} = v^{(n+1,M-1)} - v^*, & \\
\eta^{n+1} = u^{(n+1)} - u^*,  & \quad\quad  h = v^* + T^*(y - Tu^* - Tv^*), &
\end{align*}
where $v^* = \wlim_{n \rightarrow \infty} v^{(n)}$ and $u^* = \lim_{n \rightarrow \infty} u^{(n)}.$ Here and below, we use $\wlim$ as a shorthand for weak limit.
For the proof of strong convergence we need the following technical lemmas.
\begin{lemma}\cite{DDD04, fora08}
	The operator $\SS_\tau(v)$ is non-expansive, i.e., $\| \SS_\tau(u) - \SS_\tau(v) \|_2  \leq \|u - v \|_2. $
\end{lemma}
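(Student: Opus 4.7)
The plan is to reduce the claim to a one-dimensional statement and then exploit the explicit form of $S_{\beta,\varepsilon}^q = (F_{\beta,\varepsilon}^q)^{-1}$. Since $\SS_\tau$ acts component-wise by $(\SS_\tau(v))_\lambda = S_{\beta,\varepsilon}^q(v_\lambda)$, it suffices to prove the pointwise Lipschitz bound
\[
|S_{\beta,\varepsilon}^q(a) - S_{\beta,\varepsilon}^q(b)| \;\leq\; |a-b|, \qquad a,b \in \RR,
\]
for then one sums squares over $\lambda \in \Lambda$ and takes square roots to obtain the $\ell_2$-bound.

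To prove the pointwise bound, I would work with $F := F_{\beta,\varepsilon}^q$ directly. Write $F(x) = x + g(x)$ where $g(x) = \frac{\beta q}{2(1+\varepsilon)^{q-1}} \sgn(x)|x|^{q-1}$. For $q \geq 2$, the function $g$ is continuous and monotone non-decreasing on $\RR$ (its derivative, where defined, is $\frac{\beta q (q-1)}{2(1+\varepsilon)^{q-1}}|x|^{q-2} \geq 0$, and $g$ is odd and continuous through $0$). Consequently, for any $x \geq y$ one has $g(x) \geq g(y)$, so
\[
F(x) - F(y) \;=\; (x-y) + (g(x)-g(y)),
\]
where both summands are non-negative. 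Hence $|F(x) - F(y)| \geq |x-y|$, i.e., $F$ is expansive.

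Since $F$ is strictly increasing from $\RR$ onto $\RR$, its inverse $S_{\beta,\varepsilon}^q$ is well-defined, and setting $x = S_{\beta,\varepsilon}^q(a)$, $y = S_{\beta,\varepsilon}^q(b)$ in the previous display gives $|a-b| \geq |S_{\beta,\varepsilon}^q(a) - S_{\beta,\varepsilon}^q(b)|$, which is the pointwise claim. Summing the squared inequalities over $\lambda$ and taking square roots yields
\[
\|\SS_\tau(u) - \SS_\tau(v)\|_2^2 \;=\; \sum_{\lambda \in \Lambda} |S_{\beta,\varepsilon}^q(u_\lambda) - S_{\beta,\varepsilon}^q(v_\lambda)|^2 \;\leq\; \sum_{\lambda \in \Lambda} |u_\lambda - v_\lambda|^2 \;=\; \|u-v\|_2^2.
\]

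There is no real obstacle here: the proof is essentially a one-line observation once one recognizes that inverses of expansive monotone maps are non-expansive, and the expansivity of $F$ is immediate from the monotonicity of the nonlinear part $g$. The only subtlety worth flagging is the degenerate case $x = 0$ or $y = 0$, where $g$ is not differentiable, but this is handled by the continuity of $g$ and the sign function through zero, so the additive decomposition remains valid throughout $\RR$.
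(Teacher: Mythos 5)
Your proof is correct. Note that the paper itself does not prove this lemma; it is cited from \cite{DDD04, fora08}, so there is no in-paper argument to compare against. Your reduction to the scalar statement is valid because $\mathbb S_\tau$ acts component-wise, and the key observation --- that $F_{\beta,\varepsilon}^q(x)-F_{\beta,\varepsilon}^q(y)=(x-y)+(g(x)-g(y))$ with $g$ odd, continuous, and non-decreasing for $q\geq 2$, so that $F$ is expansive and hence its inverse $S_{\beta,\varepsilon}^q$ is $1$-Lipschitz --- is exactly the standard argument behind the cited results. It is also consistent with how the authors use the same structure elsewhere: in the proof of Lemma~\ref{lemma:embarace} they invoke the derivative bound $(F_{\beta,\varepsilon}^q)'(x)=1+\tfrac{\beta q(q-1)}{2(1+\varepsilon)^{q-1}}|x|^{q-2}\geq 1$ together with the mean value theorem, which is the differentiable version of your monotonicity argument; your route is marginally cleaner in that it sidesteps the non-differentiability of $|x|^{q-2}$ at the origin when $2\leq q<3$ (actually at $q>2$ the derivative formula is fine at $0$ only for $q\ge 2$ in the limiting sense, and your continuity-plus-monotonicity handling avoids fussing over this entirely). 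The final summation over $\lambda$ to pass from the scalar bound to the $\ell_2$ bound is immediate, so the proof is complete.
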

\begin{lemma}
	Assume $\| \mu^{n+1/2} \|_2 > \gamma$ for all $n$ and for a fixed $\gamma>0.$ Then $\| T \mu^{n+1/2} \|^2_{\mathcal H} \rightarrow 0$ as $n \rightarrow \infty.$
\end{lemma}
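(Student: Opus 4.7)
My plan is to use the non-expansivity of $\SS_\tau$ together with the fixed-point identity for $v^*$ supplied by Theorem~\ref{theorem:weak_conv} in order to link $\|\mu^{n+1}\|_2$ to $\|\mu^{n+1/2}\|_2$, and then combine with Lemma~\ref{lemma1_general_surrigate} to force the conclusion. Concretely, I would write out the last inner step of the $v$-iteration in (\ref{algorithm_thresholding_fun}),
\begin{equation*}
v^{(n+1,M)} = (1+\varepsilon)^{-1}\,\SS_\tau\!\bigl(v^{(n+1,M-1)} + T^*(y - Tu^{(n+1)} - Tv^{(n+1,M-1)})\bigr),
\end{equation*}
alongside the corresponding fixed-point equation $v^* = (1+\varepsilon)^{-1}\SS_\tau(h)$ that follows from Theorem~\ref{theorem:weak_conv}. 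Subtracting these identities and applying the preceding non-expansivity lemma yields
\begin{equation*}
(1+\varepsilon)\|\mu^{n+1}\|_2 \leq \bigl\|(I - T^*T)\mu^{n+1/2} - T^*T\eta^{n+1}\bigr\|_2.
\end{equation*}

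Next, I would square both sides, expand, and use $\|T^*T\mu^{n+1/2}\|_2^2 \leq \|T\|^2\|T\mu^{n+1/2}\|_{\mathcal H}^2$ (which holds because $\|T\|<1$) to obtain
\begin{equation*}
(1+\varepsilon)^2\|\mu^{n+1}\|_2^2 \leq \|\mu^{n+1/2}\|_2^2 - (2-\|T\|^2)\|T\mu^{n+1/2}\|_{\mathcal H}^2 + R_n,
\end{equation*}
where $R_n$ collects all cross-terms containing $\eta^{n+1}$. Lemma~\ref{lemma:disc_set} confines the $u$-iterates to the finite set $\Gamma_1$ after a finite number of iterations, and weak convergence coincides with strong convergence on the finite-dimensional $\ell_2(\Gamma_1)$, so along the subsequence realizing the weak limits from Theorem~\ref{theorem:weak_conv} we have $\eta^{n+1}\to 0$ in norm; combined with the uniform boundedness of $(\mu^{n+1/2})$ this gives $R_n = o(1)$. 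Moreover, Lemma~\ref{lemma1_general_surrigate} yields $\|v^{(n+1,M)} - v^{(n+1,M-1)}\|_2 \to 0$, hence $\|\mu^{n+1}\|_2^2 - \|\mu^{n+1/2}\|_2^2 = o(1)$ as well. Substituting and rearranging,
\begin{equation*}
\bigl[(1+\varepsilon)^2 - 1\bigr]\|\mu^{n+1/2}\|_2^2 + (2-\|T\|^2)\|T\mu^{n+1/2}\|_{\mathcal H}^2 \leq o(1),
\end{equation*}
and since both summands are non-negative and $2-\|T\|^2 > 1 > 0$, the second summand must tend to zero, which is exactly the desired conclusion.

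The main obstacle I anticipate is the careful bookkeeping of the $o(1)$ terms, in particular the justification that $\eta^{n+1}\to 0$ strongly: while the $u$-iterates live eventually in the finite-dimensional subspace $\ell_2(\Gamma_1)$ by Lemma~\ref{lemma:disc_set} (where weak and strong topologies agree), the full sequence may admit several accumulation points, so the argument must be run along the appropriate subsequence dictated by the weak limit of $(v^{(n)})$. Worth noting is that the hypothesis $\|\mu^{n+1/2}\|_2 > \gamma$ is not explicitly used to derive $\|T\mu^{n+1/2}\|_{\mathcal H}^2 \to 0$ itself; rather, the lower bound is likely invoked downstream to extract a contradiction (e.g., the first summand above would force $\gamma = 0$ when $\varepsilon > 0$) and thereby promote weak to strong convergence of $(v^{(n)})$.
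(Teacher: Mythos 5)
Your proof is correct, and it uses the same essential ingredients as the paper's: the identity $(1+\varepsilon)\mu^{n+1}=\SS_\tau\bigl(h+(I-T^*T)\mu^{n+1/2}-T^*T\eta^{n+1}\bigr)-\SS_\tau(h)$, non-expansivity of $\SS_\tau$, strong convergence of the $u$-iterates (via the fixation of the finite set $\Gamma_1$) to control the $\eta^{n+1}$ cross-terms, Lemma~\ref{lemma1_general_surrigate} to identify $\|\mu^{n+1}\|_2$ with $\|\mu^{n+1/2}\|_2$ up to $o(1)$, and the algebraic fact $\|\mu\|_2^2-\|(I-T^*T)\mu\|_2^2=2\|T\mu\|^2_{\mathcal H}-\|T^*T\mu\|_2^2\geq\|T\mu\|^2_{\mathcal H}$. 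The packaging differs, and yours is the cleaner one: the paper first establishes $\lim_n\bigl[\|\mu^{n+1/2}\|_2^2-\|(I-T^*T)\mu^{n+1/2}-T^*T\eta^{n+1}\|_2^2\bigr]=0$ through a reverse-triangle-inequality argument that genuinely invokes the hypothesis $\|\mu^{n+1/2}\|_2\geq\gamma$ (to absorb an additive error $\bar\epsilon\leq\varepsilon\gamma$ into the factor $(1+\varepsilon)^{-1}$), and only then extracts $\|T\mu^{n+1/2}\|^2_{\mathcal H}\to 0$. You instead square the non-expansivity bound directly against the fixed-point equation and rearrange, obtaining $[(1+\varepsilon)^2-1]\|\mu^{n+1/2}\|_2^2+(2-\|T\|^2)\|T\mu^{n+1/2}\|^2_{\mathcal H}\leq o(1)$, from which the conclusion follows without using the lower bound $\gamma$ at all. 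Your closing observation is accurate and worth emphasizing: for $\varepsilon>0$ your inequality in fact forces $\|\mu^{n+1/2}\|_2\to 0$ outright, which would contradict the standing hypothesis and thereby short-circuit the subsequent Lemmas~\ref{lemma_str_conv_0} and~\ref{lemma:embarace} in the paper's contradiction scheme; the paper's longer route keeps the conclusion in the weaker form $\|T\mu^{n+1/2}\|_{\mathcal H}\to 0$, which is what its later lemmas are built to consume.
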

\begin{proof}
Since
\[
	\mu^{n+1}  - \mu^{n+1/2} = (1+\varepsilon)^{-1} [\mathbb S_\tau (h + (I-T^*T)\mu^{n+1/2} - T^*T \eta^{n+1}) - \mathbb S_\tau(h) ] -  \mu^{n+1/2},
\]
and
\[
	\|ï¿½\mu^{n+1}  - \mu^{n+1/2} \|_2 = \|ï¿½v^{(n+1,M)} -   v^{(n+1,M-1)} \|_2 \rightarrow 0,
\]
by Lemma \ref{lemma1_general_surrigate}, we get the following
\begin{equation}
		 \label{eq_carrot_StCo}
\begin{array}{c l}
	\| (1+\varepsilon)^{-1} & [\mathbb S_\tau (h + (I-T^*T)\mu^{n+1/2} - T^*T \eta^{n+1}) - \mathbb S_\tau(h) ] -  \mu^{n+1/2} \|  \\
	 & \geq | (1+\varepsilon)^{-1}   \| \mathbb S_\tau (h + (I-T^*T)\mu^{n+1/2} - T^*T \eta^{n+1}) - \mathbb S_\tau(h)  \|  - \| \mu^{n+1/2} \| | \\
	& \rightarrow 0.  
	\end{array}
	\end{equation}
By non-expansiveness of $\mathbb S_\tau,$ we have the estimate
\[
  \| \mathbb S_\tau (h + (I-T^*T)\mu^{n+1/2} - T^*T \eta^{n+1}) - \mathbb S_\tau(h)  \|_2 \leq \| (I-T^*T) \mu^{n+1/2} - T^*T\eta^{n+1}ï¿½\|_2.
\]
Consider now
\begin{align}
	\| (I-T^*T) \mu^{n+1/2} - &T^*T\eta^{n+1}ï¿½\|^2_2 \nonumber \\
	& \leq  \|ï¿½ (I-T^*T) \mu^{n+1/2} \|^2_2 + \| T^*T\eta^{n+1}ï¿½ \|^2_2  - 2 \langle (I-T^*T) \mu^{n+1/2}, T^*T\eta^{n+1} \rangle \nonumber \\
									 & \leq  \|ï¿½ (I-T^*T) \mu^{n+1/2} \|^2_2 + \| T^* T \eta^{n+1}\|_2 + 2 \| (I-T^*T) \mu^{n+1/2}\|_2 \| T^*T\eta^{n+1} \|_2 \nonumber \\
									  & \leq   \|ï¿½ (I-T^*T) \mu^{n+1/2} \|^2_2 + \delta + 2 C \delta \nonumber \\
									 & \leq   \|ï¿½  \mu^{n+1/2} \|^2_2 + \epsilon, 
									\label{eq:bound_diff_str_conv}
\end{align}
for large enough $n$  so that $\| u^{(n+1,L)} - u^* \|_2ï¿½\leq \delta.$ The constant $C>0$ is due to the boundedness of $\| \mu^{n+1/2} \|.$ 
Due to estimate (\ref{eq:bound_diff_str_conv}),  we  have
\begin{eqnarray*}
   \| \mathbb S_\tau (h + (I-T^*T)\mu^{n+1/2} - T^*T \eta^{n+1}) - \mathbb S_\tau(h)  \|_2 & \leq &
   	 \| (I-T^*T) \mu^{n+1/2} - T^*T\eta^{n+1}ï¿½\|_2 \\
	 &\leq &   \sqrt{\|ï¿½\mu^{(n + 1/2)} \|_2 + \epsilon} \\
	 &\leq &\|ï¿½\mu^{(n + 1/2)} \|_2 + \bar\epsilon.
\end{eqnarray*}

By assumption of the lemma there exists a subsequence $(\mu^{n_k+1/2})$ such that  $\| \mu^{n_k+1/2} \|_2 \geq \gamma $ for all $k.$ For simplicity, we rename such subsequence as $(\mu^{n+1/2})$ again. Then 
$$
   (1+\varepsilon)^{-1}  \| \mathbb S_\tau (h + (I-T^*T)\mu^{n+1/2} - T^*T \eta^{n+1}) - \mathbb S_\tau(h)  \|_2 \leq \frac{1}{1+\varepsilon}  \|ï¿½\mu^{(n + 1/2)} \|_2 +  \frac{1}{1+\varepsilon} \bar\epsilon.
$$
For $\bar\epsilon \leq \varepsilon \gamma$ we obtain 
\begin{align*}
   (1+\varepsilon)^{-1}  \| \mathbb S_\tau (h + (I-T^*T)\mu^{n+1/2} - & T^*T \eta^{n+1}) - \mathbb S_\tau(h)  \|_2 \\
    & \leq  \frac{1}{1+\varepsilon}  \|ï¿½\mu^{(n + 1/2)} \|_2+  \frac{1}{1+\varepsilon}(1+ \varepsilon - 1) \gamma \\
   & \leq   \frac{1}{1+\varepsilon}  \|ï¿½\mu^{(n + 1/2)} \|_2  + \left(1- \frac{1}{1+\varepsilon} \right) \|ï¿½\mu^{(n + 1/2)} \|_2 \\
	&\leq  \|ï¿½\mu^{(n + 1/2)} \|_2.
\end{align*}

Combining the  above inequalities, we get 
	\begin{align*}	\|\mu^{(n + 1/2)} \|^2_2  & -   \|ï¿½ (I-T^*T) \mu^{n+1/2} - T^*T \eta^{n+1} \|^2_2 \\
	  & \leq  \| \mu^{(n + 1/2)} \|^2_2  - (1+\varepsilon)^{-1}   \| \mathbb S_\tau (h + (I-T^*T)\mu^{n+1/2} - T^*T \eta^{n+1}) - \mathbb S_\tau(h)  \|^2_2. 
	\end{align*}
This implies from (\ref{eq_carrot_StCo}) that
\[
	\lim_{n \rightarrow \infty} [ \| \mu^{(n + 1/2)} \|^2_2 -   \| (I-T^*T)\mu^{n+1/2} - T^*T \eta^{n+1} \|^2_2 ] = 0.
\]
Using (\ref{eq:bound_diff_str_conv}) we get
	\begin{align*}
	  \| \mu^{(n + 1/2)} \|^2_2 & - \|ï¿½ (I-T^*T) \mu^{n+1/2} - T^*T \eta^{n+1} \|^2_2 \\
	   & \geq \| \mu^{(n + 1/2)} \|^2_2 - \|ï¿½ (I-T^*T) \mu^{n+1/2} \|^2_2- \epsilon \\
	  &  = 2 \| T \mu^{n+1/2} \|^2_{\mathcal H} - \|T^*T \mu^{n+1/2} \|^2_2 - \epsilon \\
	  & \geq \| T \mu^{n+1/2} \|^2_{\mathcal H}   - \epsilon.
	\end{align*}
This yields $\| T \mu^{n+1/2} \|^2_{\mathcal H} \rightarrow 0$ for $n \rightarrow \infty.$
\end{proof}

\begin{lemma}
	\label{lemma_str_conv_0}
	For $h = v^* + T^*(y - Tu^* - Tv^*),$ 
	\[
		\|ï¿½(1+\varepsilon)^{-1} [\mathbb S_\tau (h + \mu^{n+1/2}) - \mathbb S_\tau(h)] - \mu^{n+1/2} \|_2 \rightarrow 0.
	\]
\end{lemma}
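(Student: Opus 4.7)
The plan is to express the quantity of interest via the alternating iteration itself, and reduce its control to two pieces that are already known to vanish: the tail difference $\mu^{n+1} - \mu^{n+1/2}$, and the perturbation $T^*T\mu^{n+1/2} + T^*T\eta^{n+1}$ inside the thresholding operator.

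First I would unfold the definition of $v^{(n+1,M)}$ from the iteration (\ref{algorithm_thresholding_fun}) and subtract the fixed-point identity $v^* = (1+\varepsilon)^{-1}\mathbb S_\tau(h)$ (which holds by Theorem~\ref{theorem:weak_conv} applied to the $v$-component), to obtain the explicit formula
\[
\mu^{n+1} = (1+\varepsilon)^{-1}\bigl[\mathbb S_\tau\bigl(h + (I-T^*T)\mu^{n+1/2} - T^*T\eta^{n+1}\bigr) - \mathbb S_\tau(h)\bigr].
\]
Then I would add and subtract $\mu^{n+1}$ inside the norm to be bounded, yielding the decomposition
\begin{align*}
&(1+\varepsilon)^{-1}\bigl[\mathbb S_\tau(h+\mu^{n+1/2}) - \mathbb S_\tau(h)\bigr] - \mu^{n+1/2} \\
&\quad = (1+\varepsilon)^{-1}\bigl[\mathbb S_\tau(h+\mu^{n+1/2}) - \mathbb S_\tau\bigl(h+(I-T^*T)\mu^{n+1/2} - T^*T\eta^{n+1}\bigr)\bigr] + (\mu^{n+1} - \mu^{n+1/2}).
\end{align*}

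The second summand tends to zero in $\ell_2$ by Lemma~\ref{lemma1_general_surrigate}, since the proof of that lemma in particular yields $\|v^{(n+1,M)} - v^{(n+1,M-1)}\|_2 \to 0$. For the first summand, non-expansiveness of $\mathbb S_\tau$ gives
\[
\bigl\|(1+\varepsilon)^{-1}\bigl[\mathbb S_\tau(h+\mu^{n+1/2}) - \mathbb S_\tau\bigl(h+(I-T^*T)\mu^{n+1/2} - T^*T\eta^{n+1}\bigr)\bigr]\bigr\|_2 \leq (1+\varepsilon)^{-1}\bigl(\|T^*T\mu^{n+1/2}\|_2 + \|T\|^2\|\eta^{n+1}\|_2\bigr).
\]
The term $\|\eta^{n+1}\|_2$ tends to zero because, by Lemma~\ref{lemma:disc_set}, after finitely many iterations the support of $u^{(n)}$ is confined to the finite set $\Gamma_1$, so the already-established weak convergence of $u^{(n)}$ to $u^*$ upgrades automatically to norm convergence on that finite-dimensional subspace.

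The delicate piece, and the main obstacle, is $\|T^*T\mu^{n+1/2}\|_2$, since $\mu^{n+1/2}$ is at this stage only known to be weakly null; norm smallness cannot be read off directly. Here I would use a subsequence-of-every-subsequence argument based on the previous lemma. Given any subsequence, either it has a further subsequence along which $\|\mu^{n+1/2}\|_2 \to 0$, in which case the bound is trivial, or it has a further subsequence bounded below by some $\gamma>0$, to which the previous lemma applies to give $\|T\mu^{n+1/2}\|_{\mathcal H} \to 0$ and consequently $\|T^*T\mu^{n+1/2}\|_2 \leq \|T\|\,\|T\mu^{n+1/2}\|_{\mathcal H} \to 0$. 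Since every subsequence has a further subsequence on which the quantity vanishes, the entire sequence $\|T^*T\mu^{n+1/2}\|_2$ tends to zero, which completes the proof.
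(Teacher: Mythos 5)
Your proof is correct and follows essentially the same route as the paper: the same decomposition of the target quantity into the iteration residual $\mu^{n+1}-\mu^{n+1/2}$ (which vanishes by Lemma~\ref{lemma1_general_surrigate}) plus a perturbation term $(1+\varepsilon)^{-1}\|T^*T(\mu^{n+1/2}+\eta^{n+1})\|_2$ controlled by non-expansiveness of $\mathbb S_\tau$, with $\|T\mu^{n+1/2}\|_{\mathcal H}\to 0$ supplied by the preceding lemma and $\|\eta^{n+1}\|_2\to 0$ by the support fixation. Your explicit subsequence-of-every-subsequence argument is merely a more careful handling of that lemma's hypothesis $\|\mu^{n+1/2}\|_2>\gamma$, which the paper instead satisfies implicitly by working throughout on the subsequence fixed in the final convergence theorem.
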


\begin{proof}
\begin{align*}
	\|ï¿½(1+\varepsilon)^{-1}& [\mathbb S_\tau (h + \mu^{n+1/2}) - \mathbb S_\tau(h)] - \mu^{n+1/2} \|_2 \\
	& \leq  \|ï¿½(1+\varepsilon)^{-1} \mathbb S_\tau (h + \mu^{n+1/2} - T^*T \mu^{n+1/2} - T^* T \eta^{n+1}) -  (1+\varepsilon)^{-1} \mathbb S_\tau(h) - \mu^{n + 1/2} \|_2 \\
	& + \| (1+\varepsilon)^{-1} [\mathbb S_\tau (h + \mu^{n+1/2})  -  \mathbb S_\tau (h + \mu^{n+1/2} - T^*T \mu^{n+1/2} - T^* T \eta^{n+1})] \|_2 \\
	& \leq  \|ï¿½(1+\varepsilon)^{-1} [\mathbb S_\tau (h + (I - T^* T) \mu^{n+1/2} - T^*T\eta^{n+1} - \mathbb S_\tau(h)] -  \mu^{n + 1/2} \|_2  \\
	& +  (1 + \varepsilon)^{-1} \| T^*T(\mu^{n+1/2} + \eta^{n+1}) \|_2,
\end{align*}
where we used the non-expansivity of the operator. The result follows since both terms in the last bound tend to 0 for $n \rightarrow \infty$ because of the previous lemma and Theorem \ref{theorem:weak_conv}.
\end{proof}

\begin{lemma}
	\label{lemma:embarace}
If for some $a \in \ell_2$ and some sequence $(v^{(n)}),~\wlim_{n \rightarrow \infty} v^{(n)} = 0$ and $\lim_{n \rightarrow \infty} \| (1+\varepsilon)^{-1}[\mathbb S_\tau (a + v^{(n)}) - \mathbb S_\tau (a)] - v^{(n)} ï¿½\|_2 =0,$ then $\|ï¿½v^{(n)} \|_2 =  0$ for $n \rightarrow \infty.$
\end{lemma}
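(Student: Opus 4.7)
My plan is to exploit the $\ell_2$-coercivity term $\varepsilon \|v\|_2^2$ in $J_{p,q}$, which is exactly what makes the scaled thresholding $(1+\varepsilon)^{-1}\mathbb{S}_\tau$ into a strict contraction. Once that contraction property is in place, the conclusion will follow from a one-line triangle inequality, and the weak-convergence hypothesis will turn out to be a bystander.

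First, I would record the contraction constant. Componentwise, $S_{\beta,\varepsilon}^q = (F_{\beta,\varepsilon}^q)^{-1}$, and
\[
(F_{\beta,\varepsilon}^q)'(t) = 1 + \frac{\beta q (q-1)}{2(1+\varepsilon)^{q-1}} |t|^{q-2} \geq 1
\]
for every $t \in \mathbb{R}$ and every $q \geq 2$, so $S_{\beta,\varepsilon}^q$ is $1$-Lipschitz on $\mathbb{R}$. Summing componentwise squares, $\mathbb{S}_\tau$ is non-expansive on $\ell_2(\Lambda)$, so the prefactor $(1+\varepsilon)^{-1}$ upgrades this to a genuine contraction with constant $(1+\varepsilon)^{-1} < 1$.

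Next, I would apply this estimate with arguments $a + v^{(n)}$ and $a$, giving
\[
\bigl\| (1+\varepsilon)^{-1} [\mathbb{S}_\tau(a + v^{(n)}) - \mathbb{S}_\tau(a)] \bigr\|_2 \leq \frac{1}{1+\varepsilon} \|v^{(n)}\|_2,
\]
and combine it with the standing hypothesis via the triangle inequality
\[
\|v^{(n)}\|_2 \leq \bigl\| (1+\varepsilon)^{-1}[\mathbb{S}_\tau(a + v^{(n)}) - \mathbb{S}_\tau(a)] - v^{(n)} \bigr\|_2 + \bigl\| (1+\varepsilon)^{-1}[\mathbb{S}_\tau(a + v^{(n)}) - \mathbb{S}_\tau(a)] \bigr\|_2.
\]
The first summand vanishes as $n \to \infty$ by hypothesis and the second is bounded by $(1+\varepsilon)^{-1}\|v^{(n)}\|_2$. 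Rearranging yields $\frac{\varepsilon}{1+\varepsilon}\|v^{(n)}\|_2 \to 0$, so $\|v^{(n)}\|_2 \to 0$ as required.

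The only place where real work is done is in securing a contraction constant strictly below $1$; this is precisely why the $\varepsilon\|v\|_2^2$ summand was inserted into $J_{p,q}$ at the outset, as was foreshadowed just after the definition of the functional. I do not see the weak-convergence assumption $\wlim v^{(n)} = 0$ entering this argument at all. If instead one tried to work with $\varepsilon = 0$, one would genuinely need the weak convergence together with a more delicate truncation argument using $a \in \ell_2$ to localize the thresholding (exploiting that $S_{\beta,0}^q$ behaves like the identity at zero only when $q > 2$) to regain strictness on the tail. That softer case would be the main technical obstacle, but under the standing assumption $\varepsilon > 0$ it is avoided entirely.
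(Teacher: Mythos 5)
Your proof is correct, and it is leaner than the paper's. Both arguments ultimately run on the same engine: the prefactor $(1+\varepsilon)^{-1}$ turns the non-expansive map $\mathbb S_\tau$ into a strict contraction with constant $(1+\varepsilon)^{-1}<1$, and the hypothesis forces $\frac{\varepsilon}{1+\varepsilon}\|v^{(n)}\|_2\to 0$. The difference is in how that contraction is deployed. The paper follows the template of the corresponding lemma in Daubechies--Defrise--De Mol: it first uses weak convergence to bound $\|v^{(n)}\|_2\le K$, splits $\Lambda$ into the finite set $\Gamma_0=\{\lambda:|a_\lambda|\ge K\}$ and its complement, runs the componentwise mean-value/contraction estimate on $\Gamma_1$, and disposes of $\Gamma_0$ by componentwise weak convergence. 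You observe, correctly, that the derivative bound $(F^q_{\beta,\varepsilon})'\ge 1$ holds for \emph{all} arguments when $q\ge 2$, so the contraction estimate is global and one triangle inequality in $\ell_2$ finishes the proof --- no splitting, no boundedness, and no use of $\wlim v^{(n)}=0$ at all. What your route buys is brevity and the insight that the weak-convergence hypothesis is redundant once $\varepsilon>0$; what the paper's route buys is robustness: its finite-set decomposition is the argument one would actually need in the degenerate regime (e.g.\ $\varepsilon=0$ with $q>2$, or genuine soft thresholding) where the Lipschitz constant degenerates to $1$ near the origin, which is essentially the point of your closing remark. One small caveat: you can either cite the paper's stated non-expansiveness lemma for $\mathbb S_\tau$ directly or rederive it as you do; either way the step is sound.
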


\begin{proof}
In the proof of the lemma we mainly follow the arguments in \cite{DDD04}. Since the sequence $(v^{(n)})$ is weakly convergent, it has to be bounded: there is a constant $K$ such that for all $n,~\|ï¿½v^{(n)}\|_2 \leq K.$ Reasoning component-wise we can write $|v_\lambda^{(n)}| < K$ for all $\lambda \in \Lambda.$

Let us define the set $\Gamma_0 = \{ \lambda \in \Lambda: |a_\lambda| \geq K \}$ and since $a \in \ell_2(\Lambda),$ this is a finite set. We then have $\forall \lambda \in \Gamma_1 = \Gamma \setminus \Gamma_0,$ that $|a_\lambda|$ and $|a_\lambda + v_\lambda^{(n)}|$ are bounded above by $2K.$ Recalling the definition of $S_{\beta, \varepsilon}^q = (F_{\beta, \varepsilon}^q )^{-1},$ we observe that for $q\geq 1$ and $|x| \leq 2K$,
\[
	(F_{\beta, \varepsilon}^q )'(x) = 1+ \frac{\beta q (q-1)}{2(1+\varepsilon)^{q-1}} |x|^{q-2}  \geq 1,
\]

and therefore 
\begin{align*}
|(1+\varepsilon)^{-1}[S_{\beta, \varepsilon}^q (a_\lambda + v^{(n)}_\lambda) - S_{\beta, \varepsilon}^q (a_\lambda)]|
	 & \leq (1+\varepsilon)^{-1} (\max_{x} |(S_{\beta, \varepsilon}^q)'(x)|) |v^{(n)}_\lambda| \\
	 & \leq (1+\varepsilon)^{-1} (\min_{x} |(F_{\beta, \varepsilon}^q)'(x)|)^{-1} |v^{(n)}_\lambda|  \leq  (1+\varepsilon)^{-1}|v^{(n)}_\lambda|.
\end{align*}
In the first inequality, we have used the mean value theorem and in the second inequality we  have used the lower bound for $(F_{\beta, \varepsilon}^q )'$ to upper bound the derivative $(S_{\beta, \varepsilon}^q)'$ since $S_{\beta, \varepsilon}^q = (F_{\beta, \varepsilon}^q )^{-1}$. By subtracting $|v^{(n)}_\lambda|$ from the upper inequality and rewriting $\left( 1 - (1+\varepsilon)^{-1}  \right) = C' \leq 1$, we have for all $\lambda \in \Gamma_1,$ that
\begin{align}
C' |v^{(n)}_\lambda| &\leq |v^{(n)}_\lambda| -  (1+\varepsilon)^{-1}|S_{\beta, \varepsilon}^q (a_\lambda + v^{(n)}_\lambda) - S_{\beta, \varepsilon}^q (a_\lambda)| \\
&\leq |v^{(n)}_\lambda -  (1+\varepsilon)^{-1}[S_{\beta, \varepsilon}^q (a_\lambda + v^{(n)}_\lambda) - S_{\beta, \varepsilon}^q (a_\lambda)]|,
\end{align}
by the triangle inequality which implies
\[
\sum_{\lambda \in \Gamma_1} |v_\lambda^n|^2 \leq \left(\frac{1}{C'}\right)^2\sum_{\lambda \in \Gamma_1}  |v_\lambda^n -  (1+\varepsilon)^{-1}[S_{\beta, \varepsilon}^q (a_\lambda + v^{(n)}_\lambda) - S_{\beta, \varepsilon}^q (a_\lambda)] | \rightarrow 0,~n \rightarrow \infty.
\]

On the other hand, since $\Gamma_0$ is a finite set, and $(v^{(n)})$ tends to 0 weakly as $n$ tends to $\infty,$ we also obtain
\[
	\sum_{\lambda \in \Lambda} |v_\lambda^{(n)}|^2 \rightarrow 0 \mbox{ as } n \rightarrow \infty.
\]
\end{proof}

\begin{theorem}
	The algorithm (\ref{algorithm_surrogate_func}) produces sequences $(u^{(n)})$ and $(v^{(n)})$ in $\ell_2(\Lambda)$ that converge strongly to the vectors $u^*,v^*$ respectively. In particular, the sets of strong accumulation points are non-empty.
\end{theorem}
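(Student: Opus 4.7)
The plan is to handle the two sequences separately, leveraging the structural difference between them. The $u^{(n)}$ piece is essentially free: by Lemma \ref{lemma:disc_set} the support indices of $u^{(n)}$ are eventually contained in the fixed finite set $\Gamma_1$, so the tails of $(u^{(n)})$ live in a finite-dimensional subspace of $\ell_2(\Lambda)$. On this subspace, weak and strong topologies coincide, and Theorem \ref{theorem:weak_conv} already gives weak convergence (along subsequences) to a fixed point $u^*$; hence $\|u^{(n)} - u^*\|_2 \to 0$ along these subsequences, and non-emptiness of strong accumulation points for $(u^{(n)})$ is immediate.

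For $(v^{(n)})$ the issue is genuine because $\Lambda$ may be infinite. Let $v^*$ be a weak accumulation point (which exists by the uniform $\ell_2$-bound established in the proof of Theorem \ref{theorem:weak_conv}), and pass to a subsequence so that $v^{(n)} \rightharpoonup v^*$ and $u^{(n)} \to u^*$ strongly. Set $\mu^{n+1/2} = v^{(n+1,M-1)} - v^*$ as in the preceding lemmas. By Lemma \ref{lemma1_general_surrigate} we have $\|v^{(n+1)} - v^{(n+1,M-1)}\|_2 \to 0$, so it suffices to show that $\mu^{n+1/2} \to 0$ strongly: indeed, $\|v^{(n+1)} - v^*\|_2 \le \|\mu^{n+1/2}\|_2 + \|v^{(n+1)} - v^{(n+1,M-1)}\|_2$.

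The argument I would give for $\mu^{n+1/2} \to 0$ is by contradiction. Assume there is $\gamma > 0$ and a (relabeled) subsequence with $\|\mu^{n+1/2}\|_2 \geq \gamma$. Then Lemma \ref{lemma_str_conv_0} applies and yields
\[
\|(1+\varepsilon)^{-1}\left[\mathbb S_\tau(h + \mu^{n+1/2}) - \mathbb S_\tau(h)\right] - \mu^{n+1/2}\|_2 \longrightarrow 0,
\]
where $h = v^* + T^*(y - Tu^* - Tv^*)$. On the other hand, since $v^{(n+1,M-1)} \rightharpoonup v^*$ (the $M-1$ intermediate step also converges weakly to $v^*$ because its distance to $v^{(n+1,M)}$ tends to zero by Lemma \ref{lemma1_general_surrigate}), we have $\mu^{n+1/2} \rightharpoonup 0$. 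Applying Lemma \ref{lemma:embarace} with $a = h$ and the sequence $\mu^{n+1/2}$ in place of $v^{(n)}$ forces $\|\mu^{n+1/2}\|_2 \to 0$, contradicting the lower bound $\gamma$. Therefore $\mu^{n+1/2} \to 0$ strongly, which combined with the triangle inequality above gives $\|v^{(n+1)} - v^*\|_2 \to 0$ along the subsequence.

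The main obstacle is ensuring the applicability of Lemma \ref{lemma:embarace}: one must verify that $\mu^{n+1/2}$ indeed tends weakly to $0$ (not only that $v^{(n)} \rightharpoonup v^*$), which uses the vanishing of inner-loop increments from Lemma \ref{lemma1_general_surrigate}. Once this is in place, the contradiction argument closes the proof along the subsequence; since the same reasoning applies to every weakly convergent subsequence of $(v^{(n)})$, and all such subsequences share the limit $v^*$ by Theorem \ref{theorem:weak_conv} (up to the freedom in the fixed point set), the whole sequence converges strongly and the set of strong accumulation points is non-empty.
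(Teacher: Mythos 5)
Your argument follows the paper's own proof essentially step for step: strong convergence of $(u^{(n)})$ is free because the iterates are eventually supported on the finite set $\Gamma_1$, and strong convergence of $(v^{(n)})$ is obtained by the same contradiction argument, assuming $\|\mu^{n+1/2}\|_2\geq\gamma$ along a subsequence and chaining Lemma \ref{lemma_str_conv_0} with Lemma \ref{lemma:embarace} to force $\|\mu^{n+1/2}\|_2\to 0$. Your explicit verification that $\mu^{n+1/2}\rightharpoonup 0$ (via the vanishing inner-loop increments from Lemma \ref{lemma1_general_surrigate}) is a hypothesis of Lemma \ref{lemma:embarace} that the paper applies without comment, so this is a welcome clarification rather than a deviation.
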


\begin{proof}
	Let $u^*$ and $v^*$ be weak accumulation points and let $(u^{(n_j)})$ and $(v^{(n_j)})$  be subsequences weakly convergent to $u^*$ and $v^*$ respectively. Let us denote the latter subsequences $(u^{(n)})$ and $(v^{(n)})$ again. 

If $\mu^{n+1/2}$ is such that $\| \mu^{n+1/2}\|_2 \rightarrow 0,$ then the statement of the theorem follows from  Lemma \ref{lemma1_general_surrigate}.
If, instead, there exists a subsequence, denoted by the same index, that $\| \mu^{n+1/2} \|_2 \geq \gamma$, then by Lemma \ref{lemma_str_conv_0} we get that $\| T \mu^{n+1/2} \|_{\mathcal H} \rightarrow 0.$     Subsequently applying Lemma \ref{lemma_str_conv_0} and Lemma \ref{lemma:embarace}, we get $\| \mu^{n+1/2} \|_2 = 0,$ which yields a contradiction to the assumption. 
Thus, by  Lemma \ref{lemma1_general_surrigate} we have that $(v^{(n)})$ converges to $v^{*}$ strongly. The strong convergence of $(u^{(n)})$ is already guaranteed by Theorem  \ref{theorem:weak_conv} .
\end{proof}


\section{Numerical realization and testing}

In this section, we continue the discussion started in the introduction on the geometry of the solution sets for fixed $p,q$  and regularization parameters chosen from the prescribed grids. 
However, we not only extend our preliminary geometrical observations on the sets of computed solutions  to the high-dimensional case, but also provide an a priori goal-oriented parameter choice rule. 

We also compare results for multi-penalty regularization with $0\leq p\leq1,~2 \leq q\leq \infty$ and the corresponding one-penalty regularization schemes. This specific comparison has been motivated by encouraging results obtained in  \cite{fonapeXX, NP13} for the Hilbert space setting, where the authors have shown the superiority and robustness of the multi-penalty regularization scheme compared to the \enquote{classical} one-parameter regularization methods.

\subsection{Problem formulation and experiment data set}
In our numerical experiments we consider the model problem of the type
\[
	y= T (u^\dag + v^\dag),
\]
where $T \in \RR^{m \times N}$ is an i.i.d Gaussian matrix, $u^\dag$ is a sparse vector and $v^\dag$ is a noise vector. The choice of $T$ corresponds to compressed sensing measurements \cite{FoRa13}.
In the experiments, we consider 20 problems of this type with $u^\dag$ randomly generated with values on $[-3,3]$ and $\#\supp(u^\dag) = 7,$ and $v^\dag$ is a random vector whose components are uniformly distributed on $[-1,1]$, and normalised such that $\|v^\dag \|_2 =0.7,$ corresponding to  a signal to noise ratio of ca. 10 \%. In our numerical experiments, we are keeping such a noise level fixed.

In order to create an experimental data set, we were considering for each of the problems the minimization of the functional \eqref{eq:funct_general} for $p\in\{0,0.3,0.5,0.8,1\}$ and $q\in\{2,4,10,\infty\}$. The regularization parameters $\alpha$ and $\beta$ were chosen from the grid $Q_{\alpha_0}^k\times Q_{\beta_0}^k$, where $Q_{\alpha_0}^k:=\{\alpha= \alpha_i = \alpha_0 k^i\, ,\alpha_0 = 0.0009,k=1.25,i=0,\ldots,30\}$, and $Q_{\beta_0}^k:=\{\beta = \beta_i = \beta_0 k^i,\beta_0 = 0.0005,k=1.25,i=0,\ldots,30\}$. For all possible combinations of $p$ and $q$ and $(\alpha,\beta)$   we run algorithm \eqref{algorithm_thresholding_fun} with number  $L=M=20$ of inner loop iterations and starting values $u^{(0)}=v^{(0)}=0$. 
Furthermore, we set $\varepsilon=0$ since the additional term $\varepsilon \| v\|_2^2$ is necessary for coercivity  only in the infinite-dimensional setting. Due to the fact that the thresholding functions for $p\in\{0.3,0.8\}$ are not given explicitly, we, at first, precomputed them on a grid of points in $[0,5]$ and interpolated in between, taking also into consideration the jump discontinuity. Respectively, we did the same precomputations for $q\in\{4,10\}$ on a grid of points in $[0,1]$.

\subsection{Clustering of solutions}
As we have seen in the introduction of this paper, the 2D experiments revealed certain regions of computed solutions for $u^{\dagger}$ and $v^{\dagger}$ with very particular shapes, depending on the parameters $p$ and $q$. We question if similar clustering of the solutions can also be found for problems in high dimension. To this end, the challenge is the proper geometrical representation  of the computed high dimensional solutions, which can preserve the geometrical structure in terms of mutual distances. We consider the set of the computed solutions  for fixed $p$ and $q$ in the grid $Q_{\alpha_0}^k\times Q_{\beta_0}^k$ as  point clouds which we investigate independently with respect to the components $u^\dagger$ and $v^\dagger$ respectively. As the solutions are depending on the two scalar parameters $(\alpha, \beta),$ it is legitimate to assume that they form a 2-dimensional manifold embedded in the higher-dimensional space. Therefore, we expect to be able 
to visualize the point clouds and analyze their clustering  by employing suitable dimensionality reduction techniques. 
A broad and nearly complete overview although not extended in its details on existing dimensionality reduction techniques as well as a MATLAB toolbox is provided in~\cite{maaten1,maaten2,maatenSW}. \\

For our purposes, we have chosen the Principal Component Analysis (PCA) technique because we want to verify that calculated minimizers $u^*$ and $v^*$ form clusters around the original solutions.
In the rest of the subsection, we only consider one fixed problem from the previously generated data set. In the following figures, we report the estimated regions of the solutions $u^*$ and $v^*$, as well as the corresponding regularization parameters chosen from the grids $Q_{\alpha_0}^k\times Q_{\beta_0}^k$. We only present \emph{feasible} solutions, i.e., the ones that satisfy the discrepancy condition
\begin{equation}
\label{feaible_sol_carrot}
\#\supp(u^*)\leq\#\supp(u^{\dagger}) \text{, and } \|T(u^*+ v^*) - y\|_2 <0.1.
\end{equation}
 In Figure \ref{fig:PCA05} we consider the cases $p=0.5$ and $q \in \{ 2,4 \}$, and in Figure  \ref{fig:PCA03}  the corresponding results for  $p=0.3$ and $q \in \{ 2,4 \}$ are displayed. \\%

First of all, we observe that the set of solutions $u^*$ forms certain structures, visible here as one-dimensional manifolds, as we also observed in the 2D experiments of the introduction. Likewise, the set of solutions $v^*$ are more unstructured, but still clustered. The effect of modifying $q$ from 2 to 4 increases the number of feasible solutions according to  (\ref{feaible_sol_carrot}). Concerning the parameter $p$, by modifying it from $0.5$ to $0.3$, the range of $\alpha'$s which  provide feasible solutions is growing. Since it is still hard from this geometrical analysis on a single problem to extract any qualitative information concerning the accuracy of the reconstruction, we defer the discussion on multiple problems to the following subsection. 

\begin{figure}[hpt]
\includegraphics[width=0.32\textwidth]{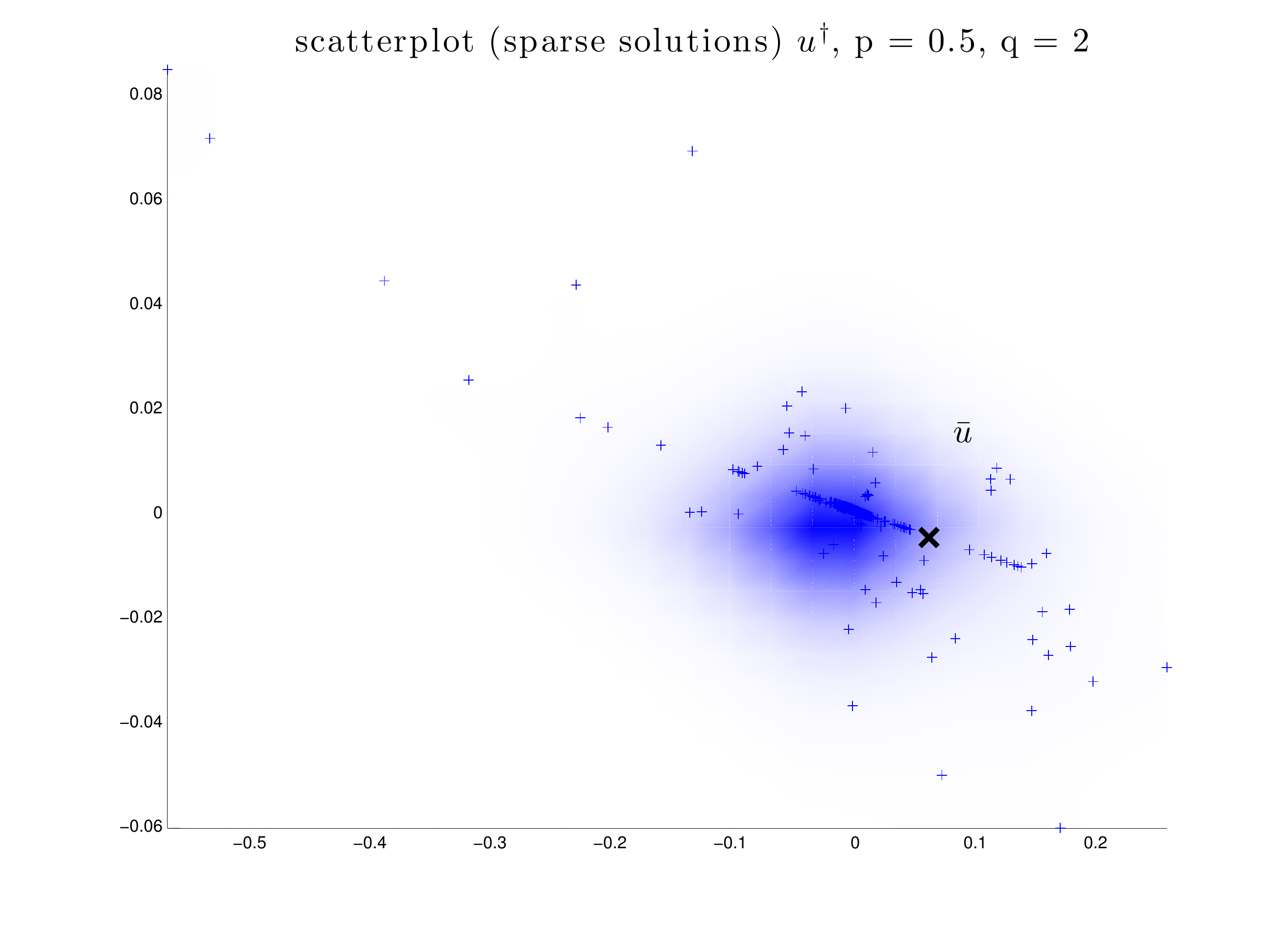}
\includegraphics[width=0.32\textwidth]{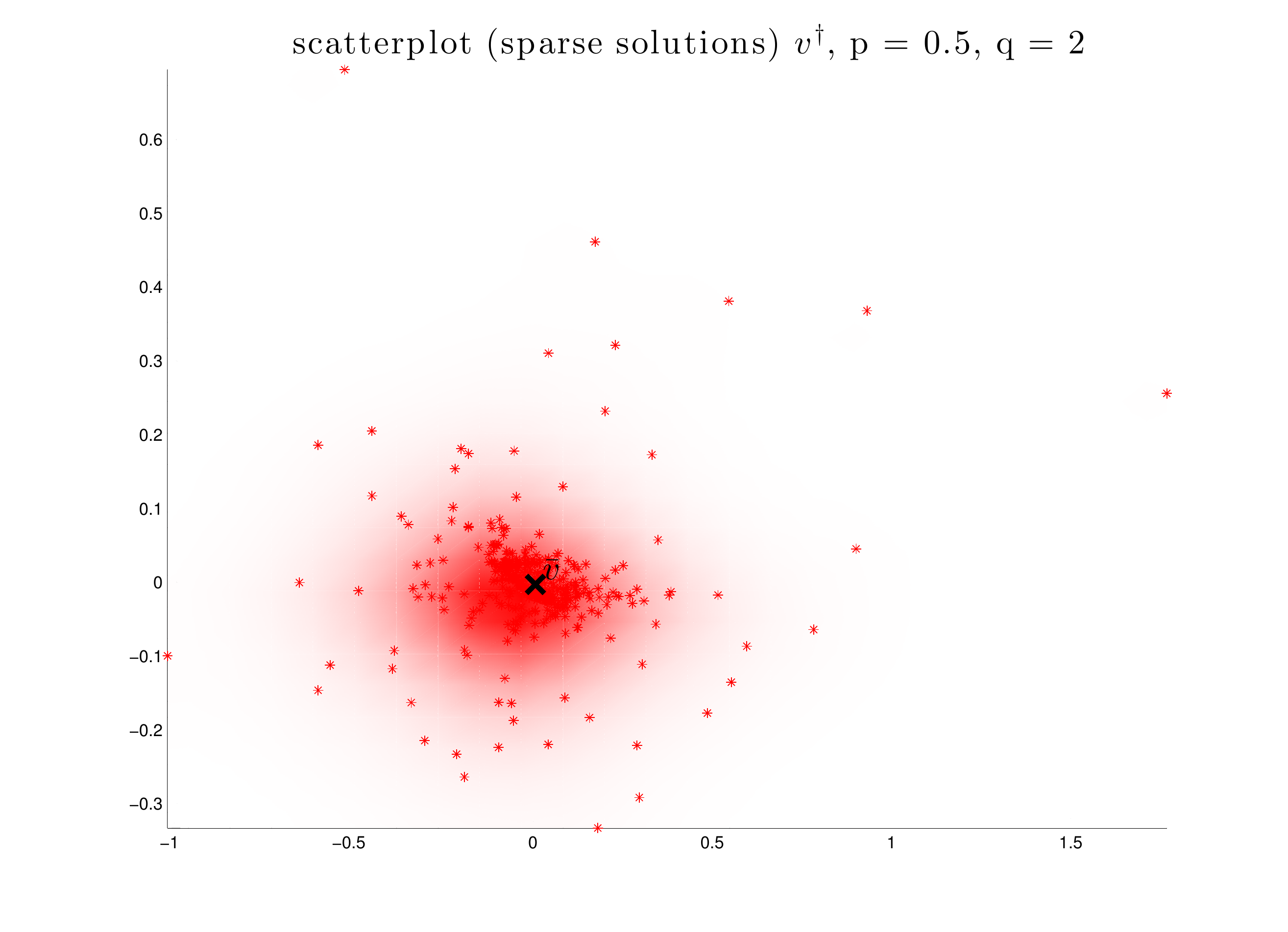}
\includegraphics[width=0.32\textwidth]{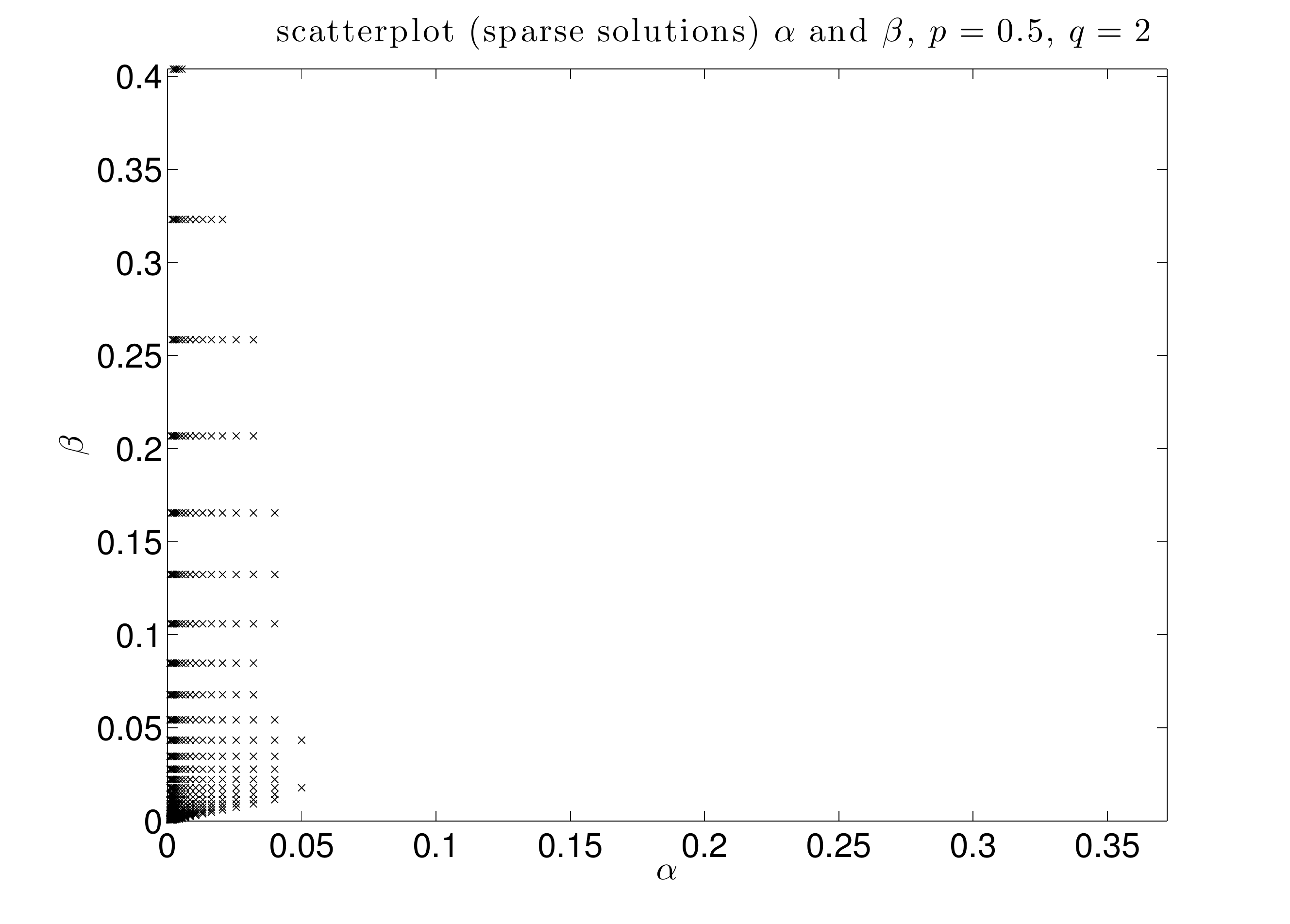}
\includegraphics[width=0.32\textwidth]{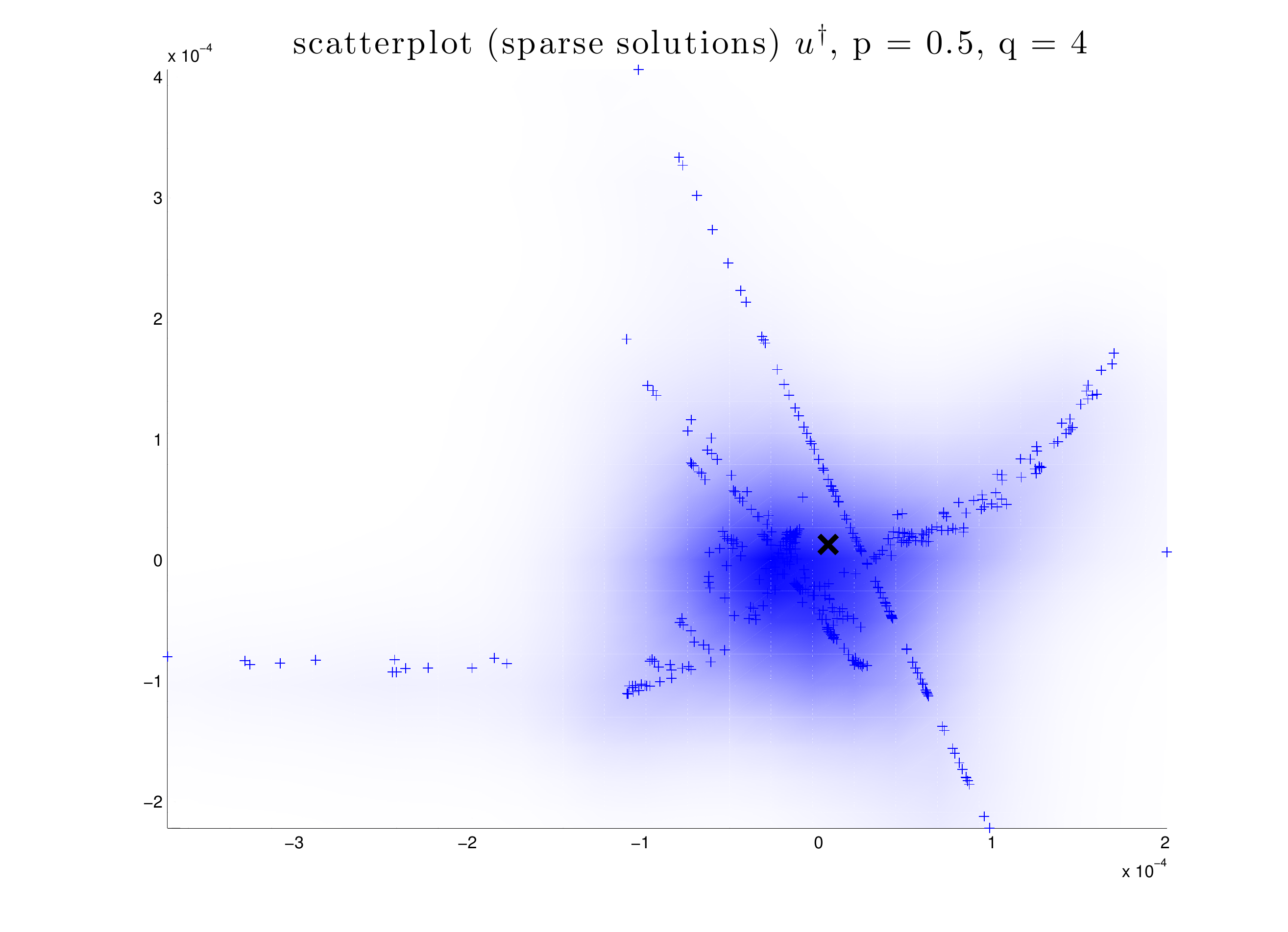}
\includegraphics[width=0.32\textwidth]{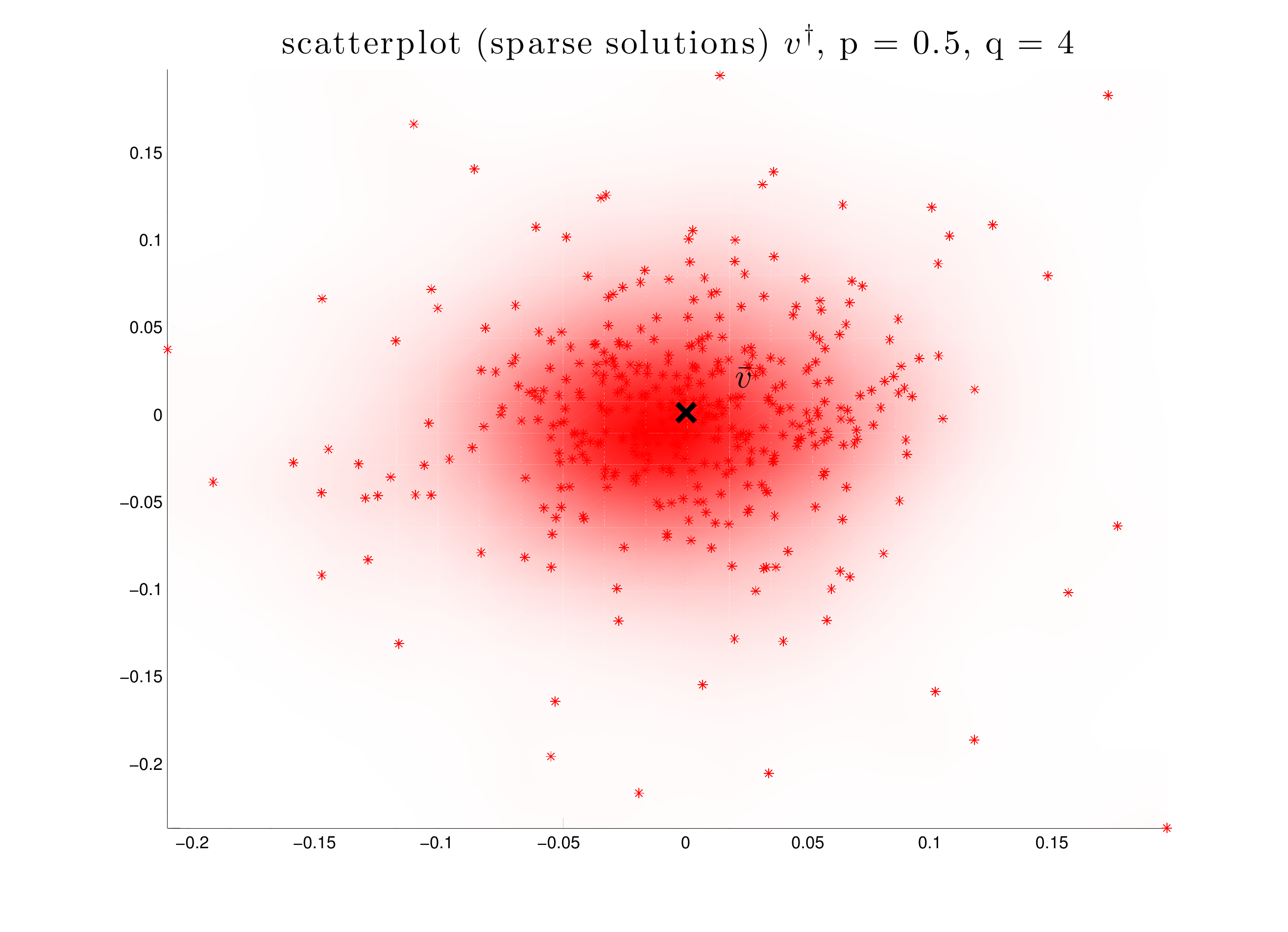}
\includegraphics[width=0.32\textwidth]{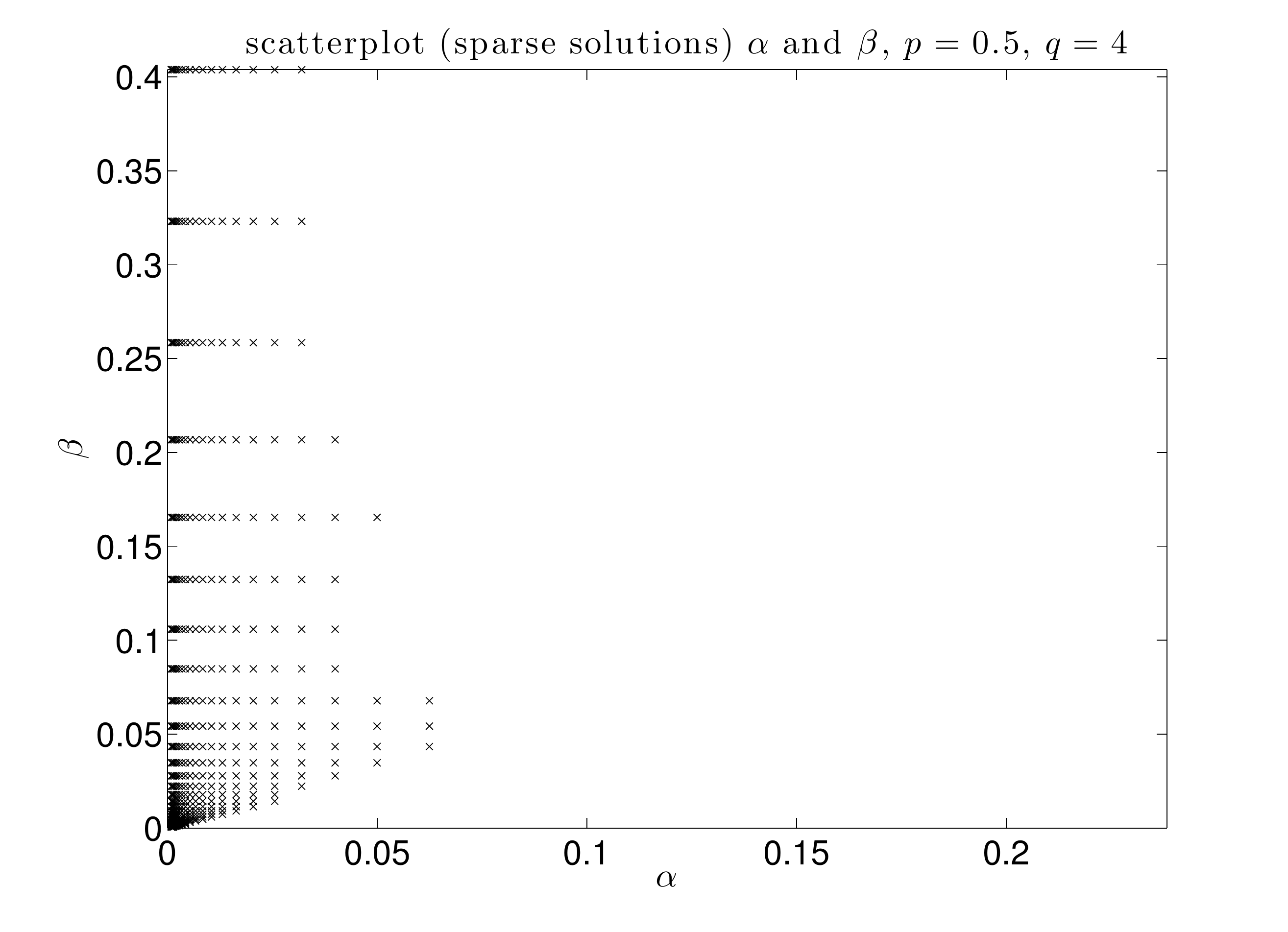}
\caption{Estimated regions of the regularization parameters (right panel) and the corresponding solution $u^*$ (left panel) and $v^*$ (middle panel) for $p=0.5$, and $q=2$ (top), and $q=4$ (bottom) repectively using PCA. The black crosses indicate the real solutions.} \label{fig:PCA05}
\end{figure}

\begin{figure}[hpt]
\includegraphics[width=0.32\textwidth]{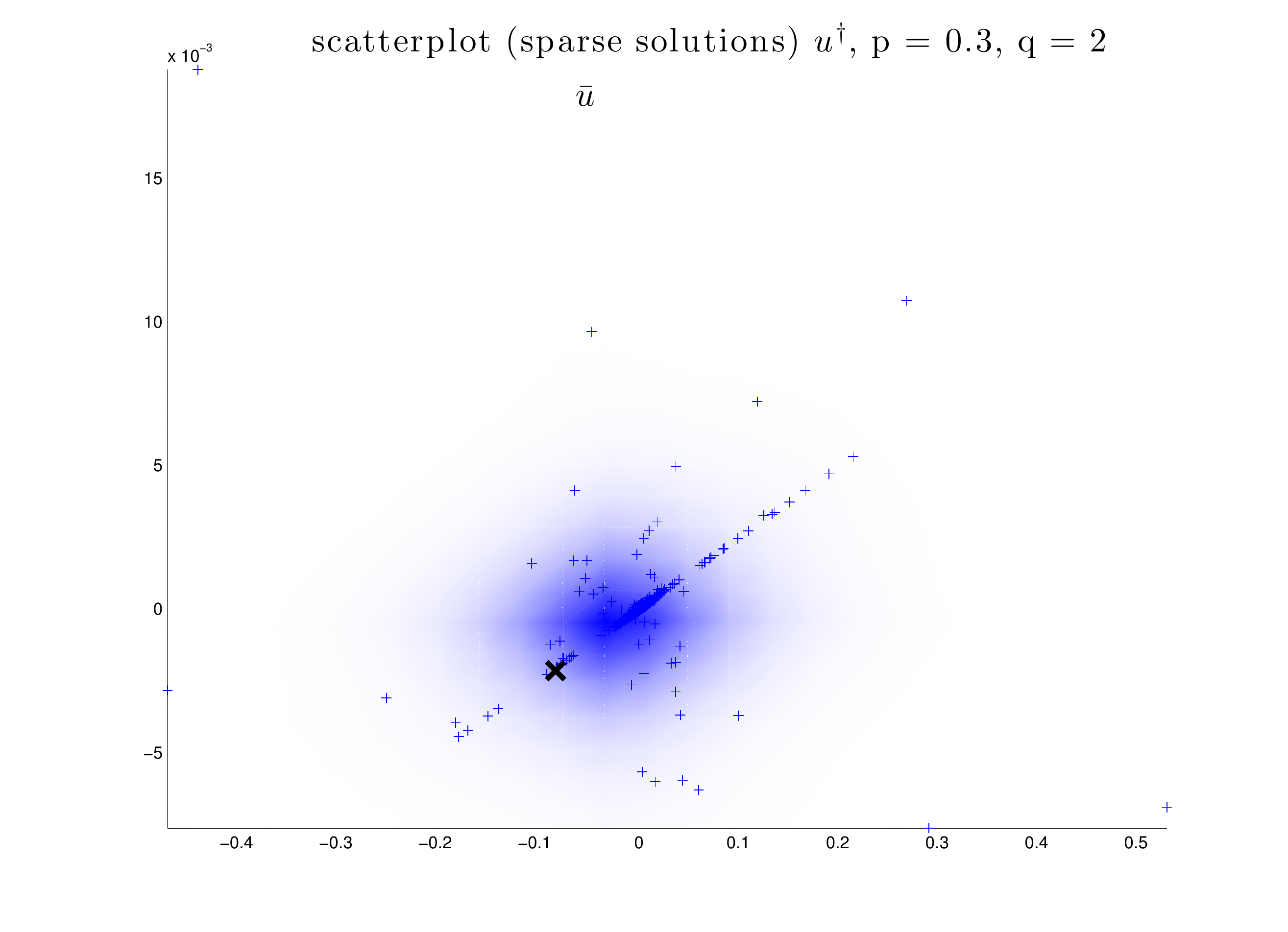}
\includegraphics[width=0.32\textwidth]{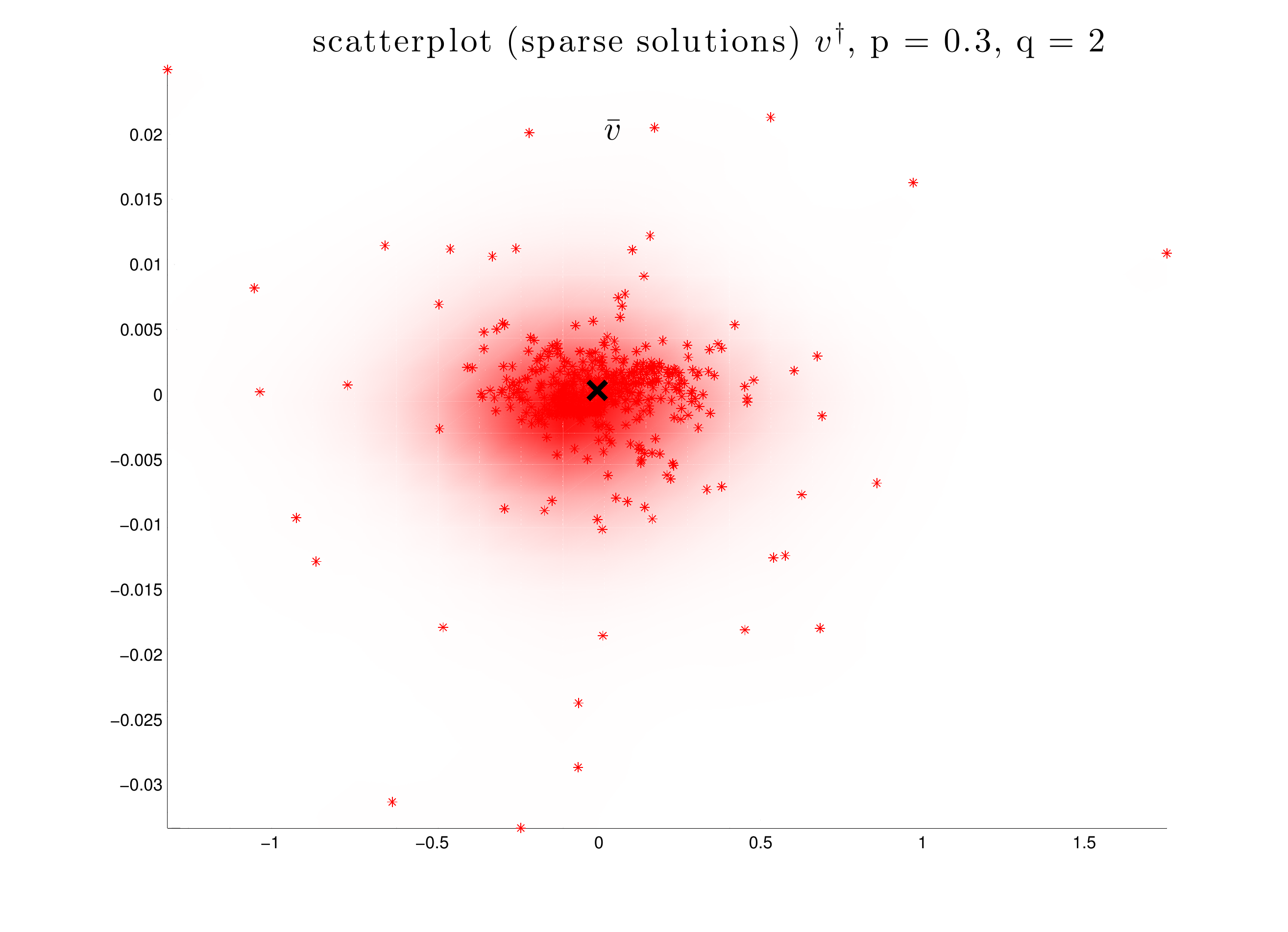}
\includegraphics[width=0.32\textwidth]{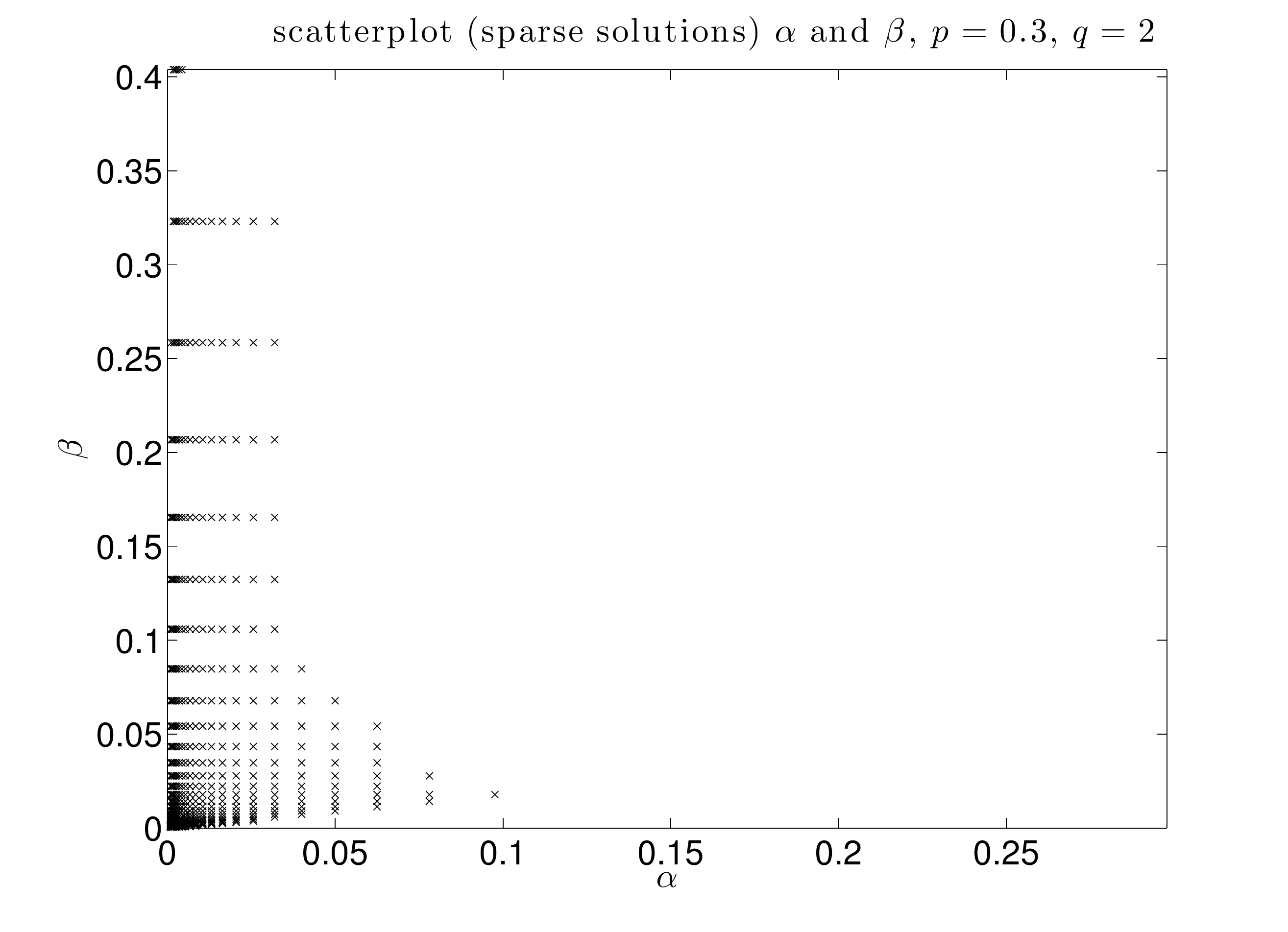}
\includegraphics[width=0.32\textwidth]{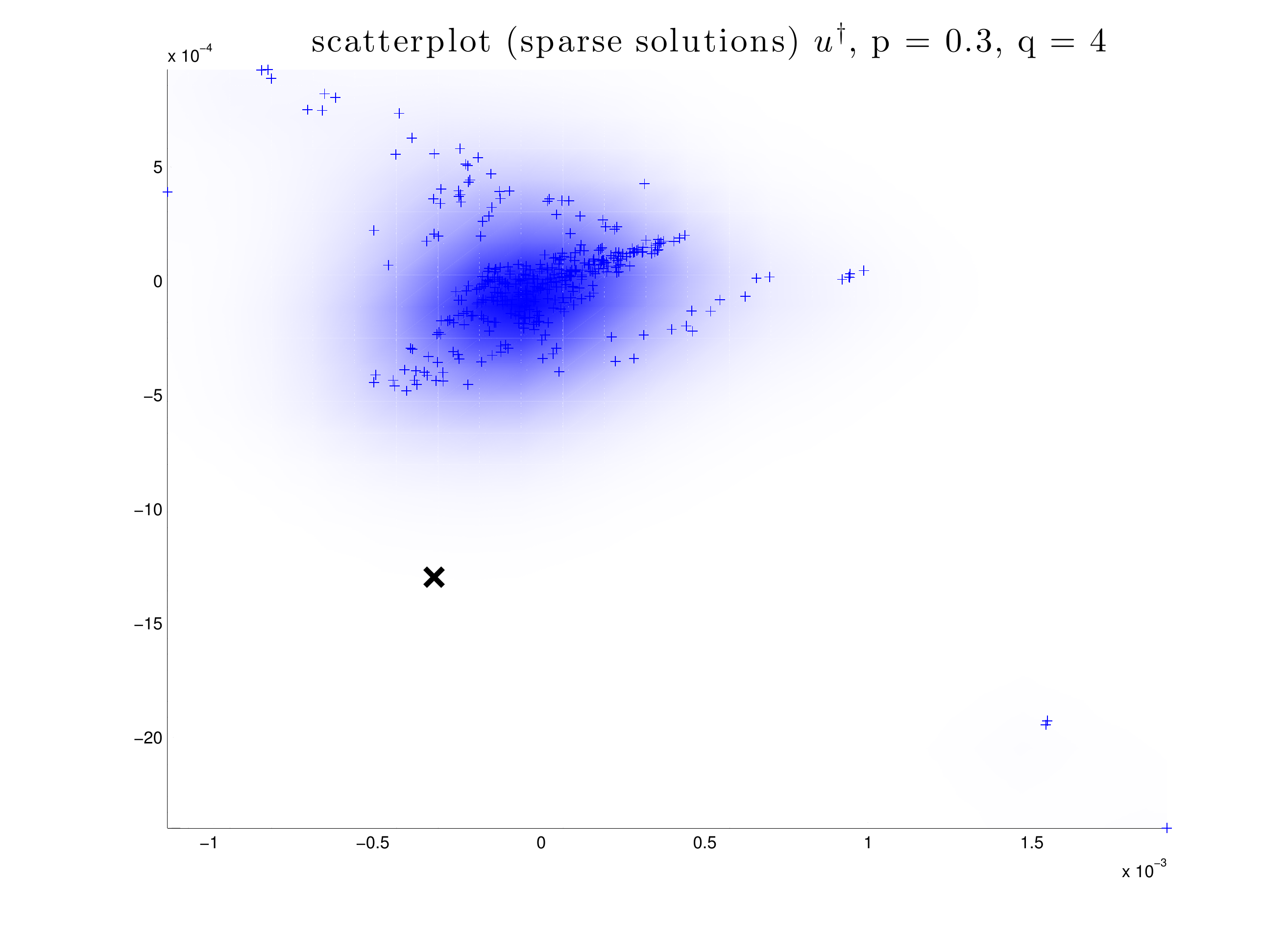}
\includegraphics[width=0.32\textwidth]{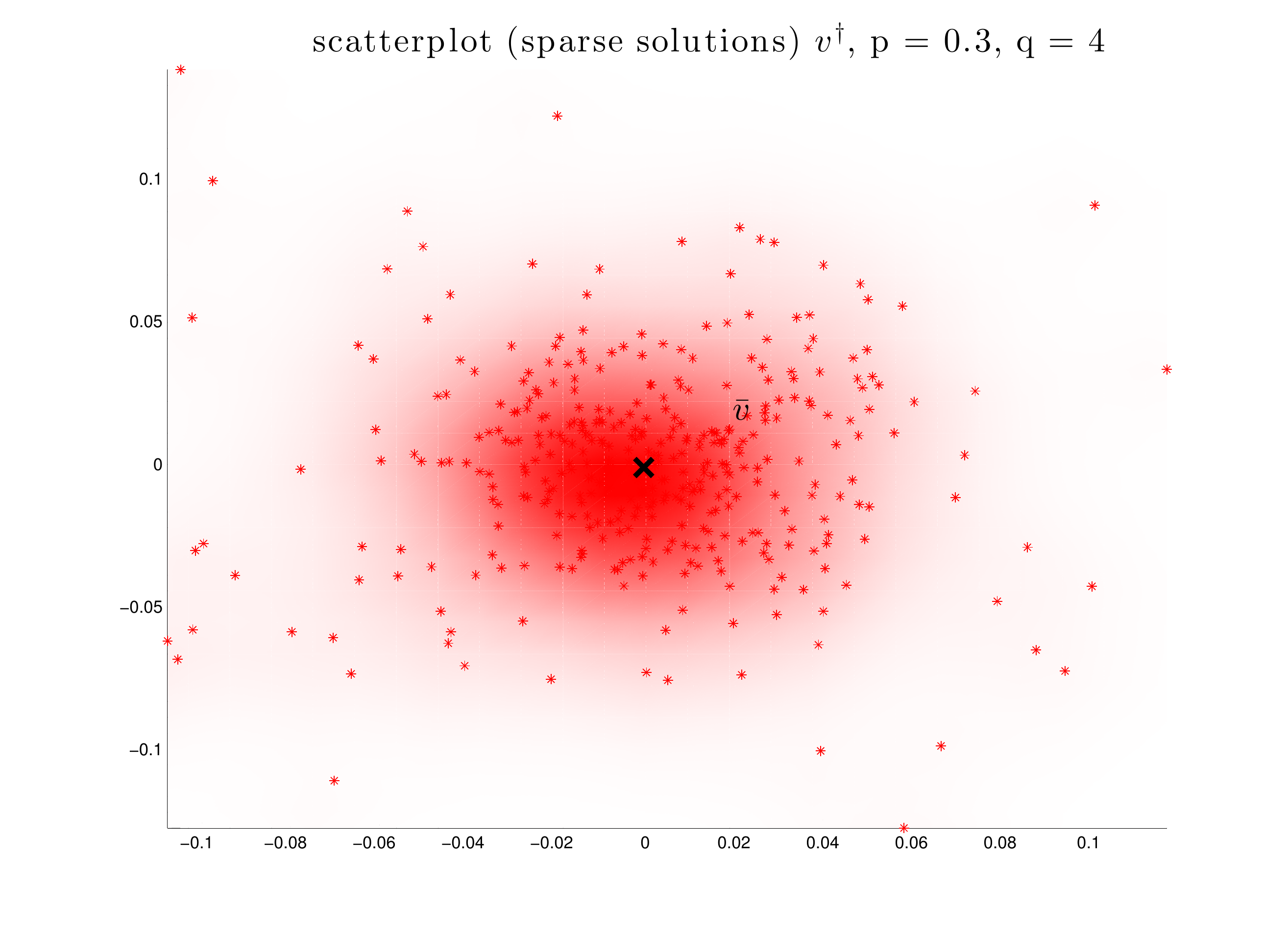}
\includegraphics[width=0.32\textwidth]{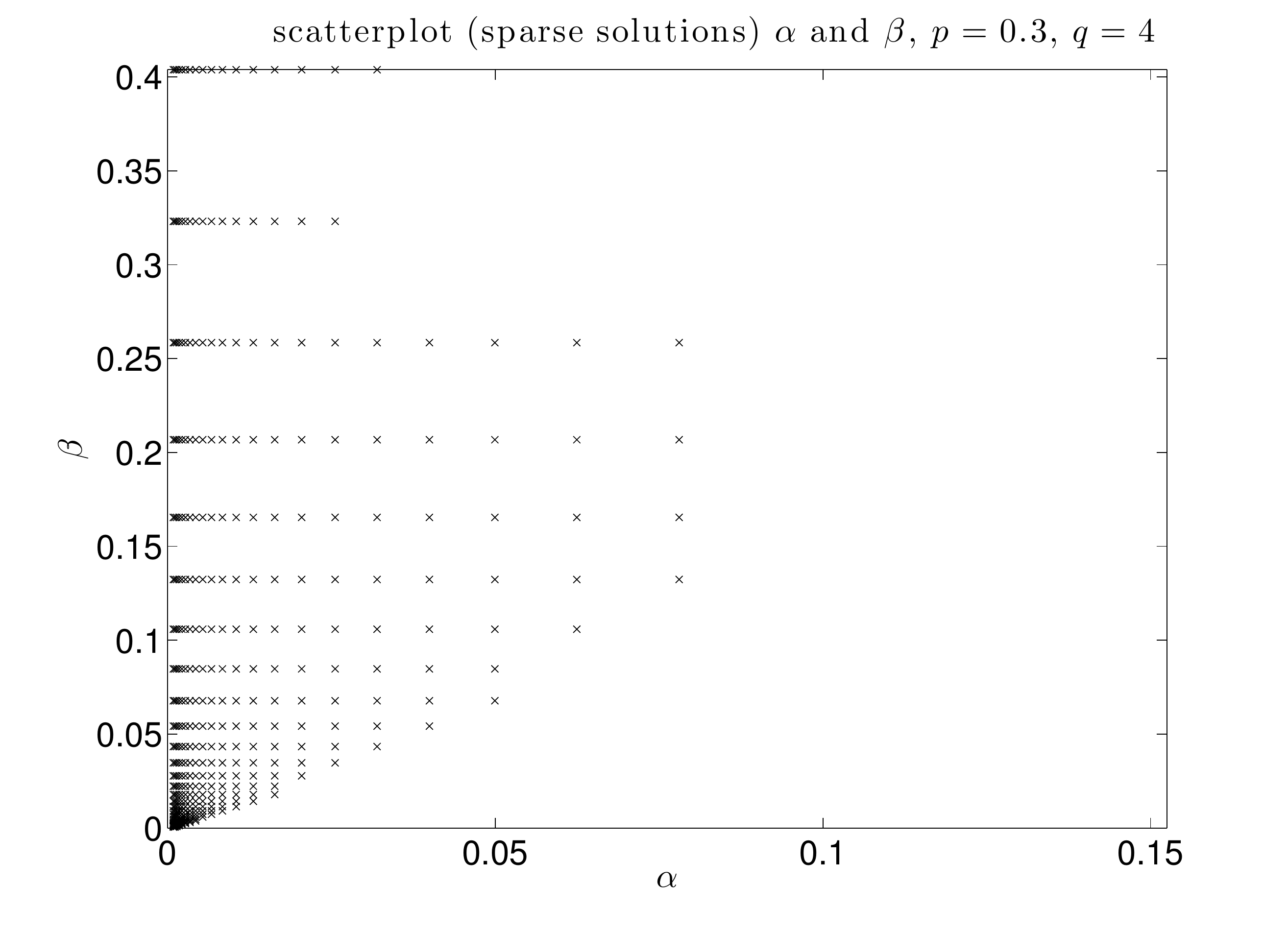}
\caption{Estimated regions of the regularization parameters (right panel) and the corresponding solution $u^*$ (left panel) and $v^*$ (middle panel) for $p=0.3$, and $q=2$ (top), and $q=4$ (bottom) repectively using PCA. The black crosses indicate the real solutions.} \label{fig:PCA03}
\end{figure}

\subsection{Comparison with the one-parameter counterpart}
Motivated by some positive results  \cite{NP13, fonapeXX} showing the superiority of multi-penalty regularization against classical single-parameter regularization schemes in Hilbert spaces, in this subsection we compare the performance of multi-penalty regularization and its one-parameter counterpart, to which we further refer as \emph{mono-penalty minimization}.

It is now well-established that a sparse solution can be reconstructed by minimizing the functional of the type
\begin{equation}
\label{eq:one_param}
	J_p (u) : = \|T u - y\|^2_{2} + \alpha \| u \|_{p}^p,
\end{equation}
with $p \in [0,1].$ A local minimizer $u_{\alpha,p}^*$ of this functional can be computed by the iterations
\[
	u^{(n+1)}_\lambda =  H_\alpha^p (u^{(n)}_\lambda + [T^* (y-Tu^{(n)}) ]_\lambda), \quad n\geq 0,
\] 
where $H_\alpha^p$ is the thresholding operator, defined as in Proposition 1. \\

In order to assess the obtained results, we compare the performance of the considered regularization schemes. We measure the approximation error (AE) by $\| u - u^\dagger \|_{2}$, as well as the number of elements in the symmetric difference (SD) by $\#(\supp(u) \Delta \supp(u^\dagger))$. The SD is defined as follows: $\lambda \in \supp(u) \Delta \supp(u^\dagger)$ if and only if  either $\lambda \notin \supp(u)$ and $\lambda\in\supp(u^\dagger)$ or $\lambda\in\supp(u)$ and $\lambda\notin\supp(u^\dagger)$. \\

For each problem and each $(p,q)-$combination we compute the best multi-penalty solution $u^* = u^*(\alpha,\beta),$ meaning that no  other pairs $(\alpha,\beta)\in Q_{\alpha_0}^k\times Q_{\beta_0}^k$ can improve the accuracy of the algorithm. 
Simultaneously, for each value of $p\in \{0, 0.3, 0.5, 0.8, 1\}$ and each problem from our data set we compute the best mono-penalty solution.
Then,  for each pair of $p$ and $q$ (multi-penalty) and for each $p$ (mono-penalty), we compute the mean value of the AE and SD. The respective results are shown in Figure~\ref{fig:accuracy_support} on the left. We observe from these results, that
\begin{itemize}
\item comparing only the multi-penalty schemes, the choice of $q=2$ allows to achieve the smallest AE, in general, followed by $q=\infty$ and $q=4$; in SD the best choice is also $q=2$, but followed first by $q=4$ and then $q=\infty$;
\item AE for the mono-penalty approach is always larger than for the best multi-penalty scheme, and this negative gap is even more significant in SD.
\end{itemize}
Since mean value statistics can be corrupted by single large outliers, we provide an additional more individual comparison of multi-penalty and mono-penalty minimization on the right panel of Figure~\ref{fig:accuracy_support}. We compare problem-wise the values of AE and SD for each multi-penalty solution with $p$ and $q$ and the respective mono-penalty solution. The colored bar shows the relative number of problems for which multi-penalty minimization performed better or equal than mono-penalty minimization. The results confirm the above stated interpretation of the panel on the left. In particular multi-penalty minimization appears to be particularly superior in terms of support recovery.

\begin{figure}[hpt]
\includegraphics[width=0.48\textwidth]{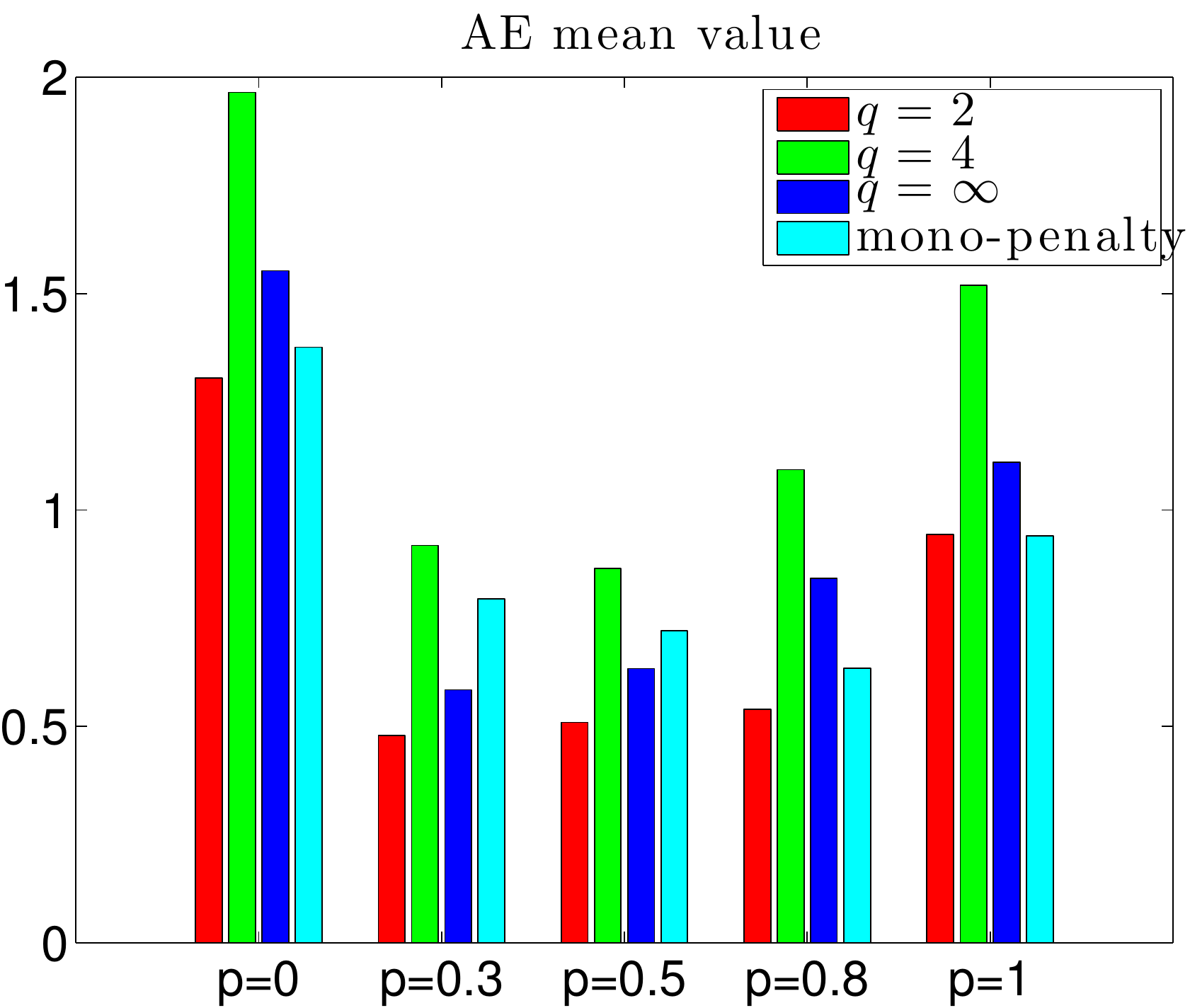}
\includegraphics[width=0.48\textwidth]{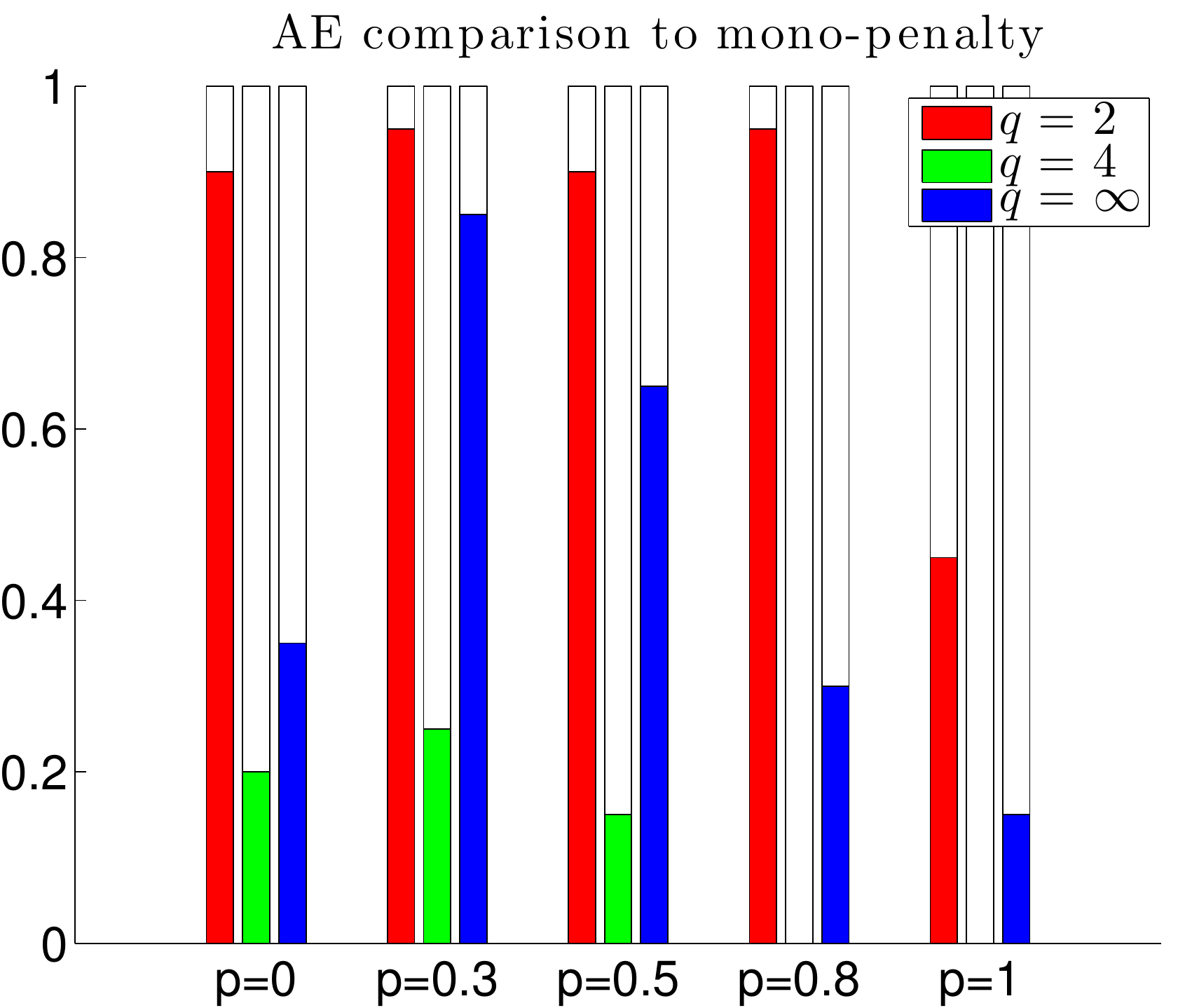}
\includegraphics[width=0.48\textwidth]{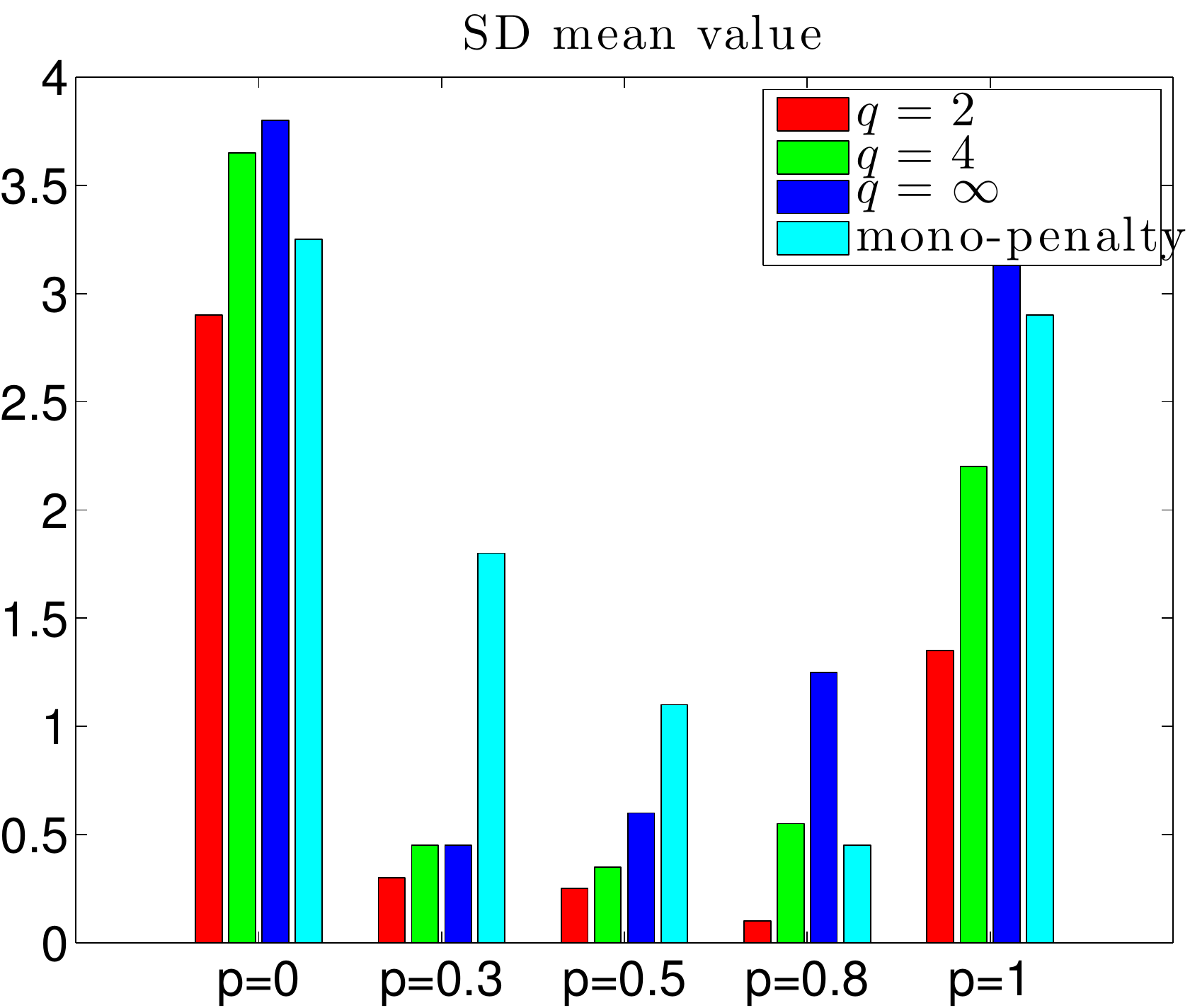}
\includegraphics[width=0.48\textwidth]{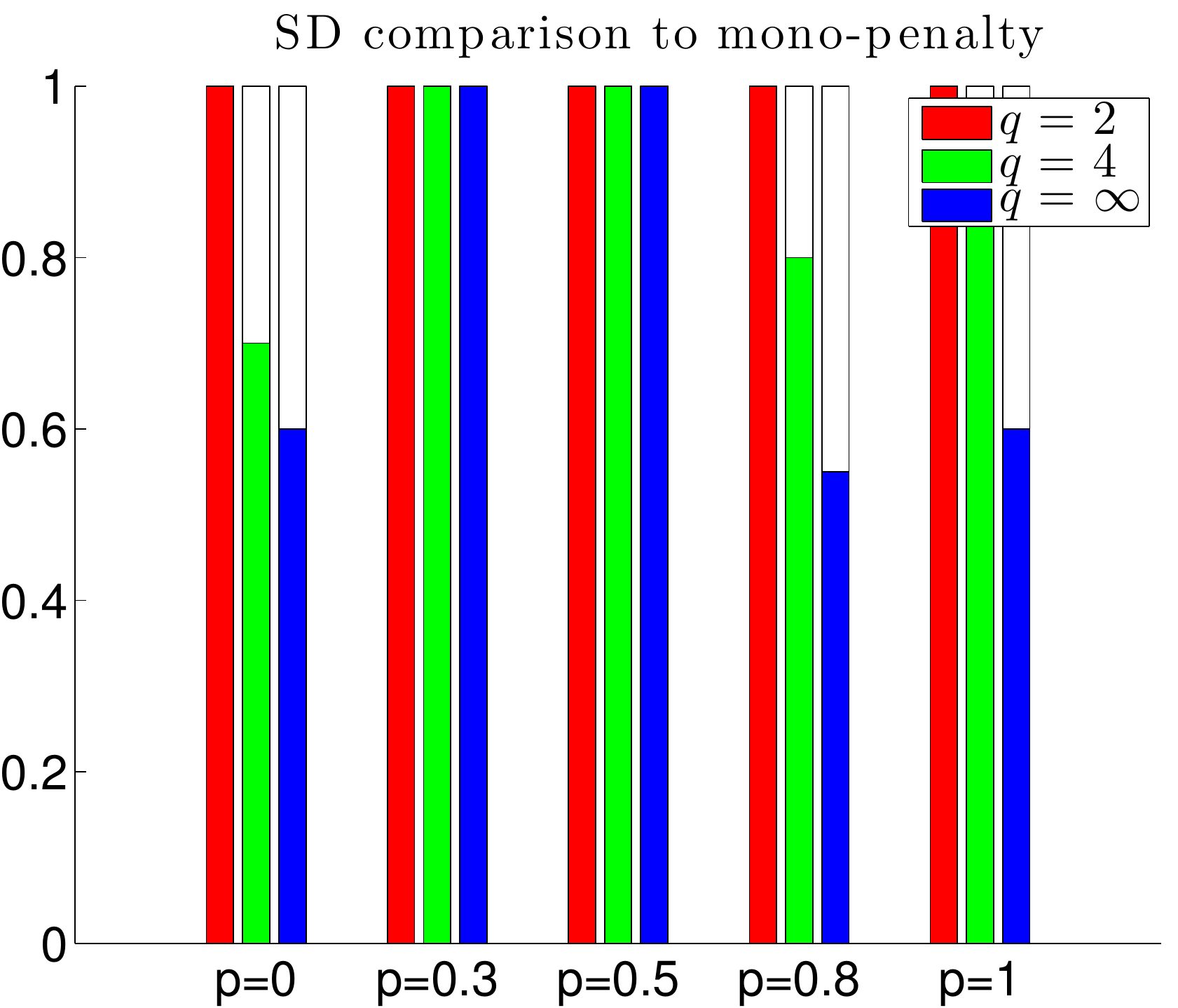}
\caption{The left panel presents for each $p\in\{0,0.3,0.5,0.8,1\}$ the mean of the AE (top) as well as the SD (bottom) for the solution $u^*$ for 20 problems for different parameter values $q\in\{2,4,\infty\}$ as well as for the mono-penalty solution $u_{\alpha,p}$. For each of the 20 problems and each pair $(p,q)$, the best individual parameter pair $(\alpha,\beta)$ was chosen for comparison. On the right panel a coloured bar indicates the empirical probability of better performance by the multi-penalty approach in terms of AE (top) and SD (bottom) with respect to the mono-penalty approach.} \label{fig:accuracy_support}
\end{figure}

\subsection{Goal-oriented parameter choice}

In the previous sections, we provided a very detailed description of the iterative thresholding algorithms for multi-penalty regularization with non-convex and non-smooth terms. However, till now we have not touched upon the topic on the parameters choice for the concrete implementation of the presented algorithm. At the same time,  from the extensive numerical testing presented above, we have observed that, firstly, some $(p,q)-$combinations allow the best performance according to the prescribed goal, and, secondly, due to the clustering of the solutions the regularization parameters $(\alpha, \beta)$ can be chosen a priori for any fixed $p,q$ (see again Figures \ref{fig:PCA05} and \ref{fig:PCA03}). 

As it can be observed in the quality measures of  Figure \ref{fig:accuracy_support}  the performance of the method depends on $p$ and $q$: in fact, one can conclude that the  minimization of (\ref{eq:funct_general}) by means of algorithm (\ref{algorithm_thresholding_fun}) performs  unsatisfactory for the \enquote{classical} choices $p=0$ and $p=1$ with respect to both AE and SD. Our most relevant regularization is that  multi-penalty regularization with $p \in (0,1)$ and $q=2$ performs best with respect to both the reconstruction accuracy and the support reconstruction. The support reconstruction for $p \in (0,1)$ and $q=4$ is not significantly worse than for $q=2.$ However, one can observe a very poor reconstruction accuracy for $q=4$ and this is even more relevant when $p=0$ or $p=1.$
Multi-penalty regularization with $p\in (0,1)$ and $q= \infty$ performs reasonably but  worse  in terms of accuracy and support reconstruction than $q=2$.

Very surprisingly for a fixed pair $(p,q)$ there exist a large set of regularization parameters  $(\alpha, \beta)$ which perform equally good in terms of AE and SD. In Figures \ref{fig:parameters0} and \ref{fig:parameters} we present for each of the 20 problems from our data set the twenty best pairs of regularization parameters $(\alpha, \beta)$ that allows for the best AE and SD with $p=0.3$ and $q \in \{2,4, \infty\}.$ The visual analysis presented in the figures show the geometrical properties of the sets of the best parameters, namely they lay within a cone, whose width is governed by $p,q$. Clearly, these observations furnish a guideline for choosing in a stable way effective regularization parameters.

\begin{figure}[hpt]
\includegraphics[width=0.32\textwidth]{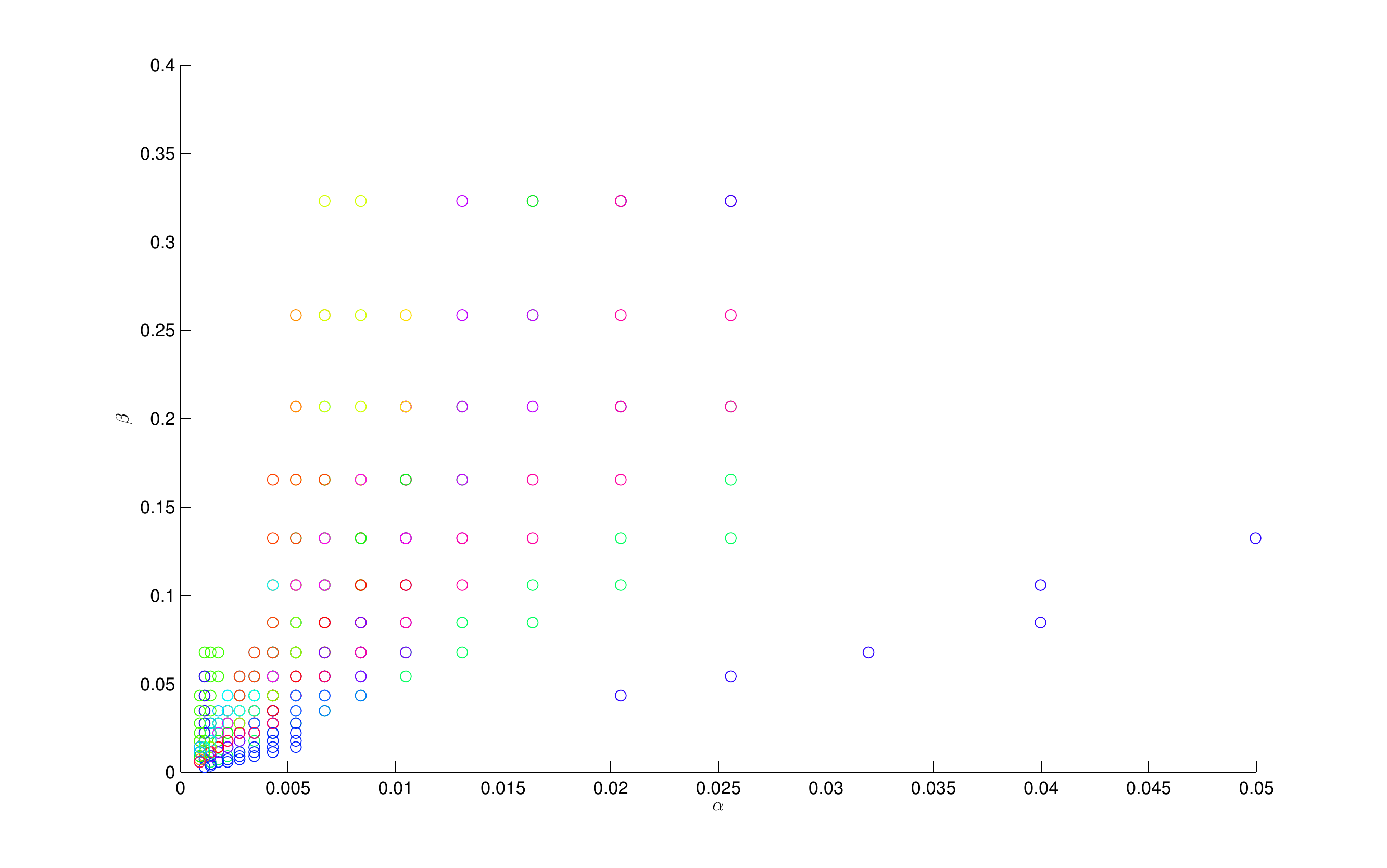}
\includegraphics[width=0.32\textwidth]{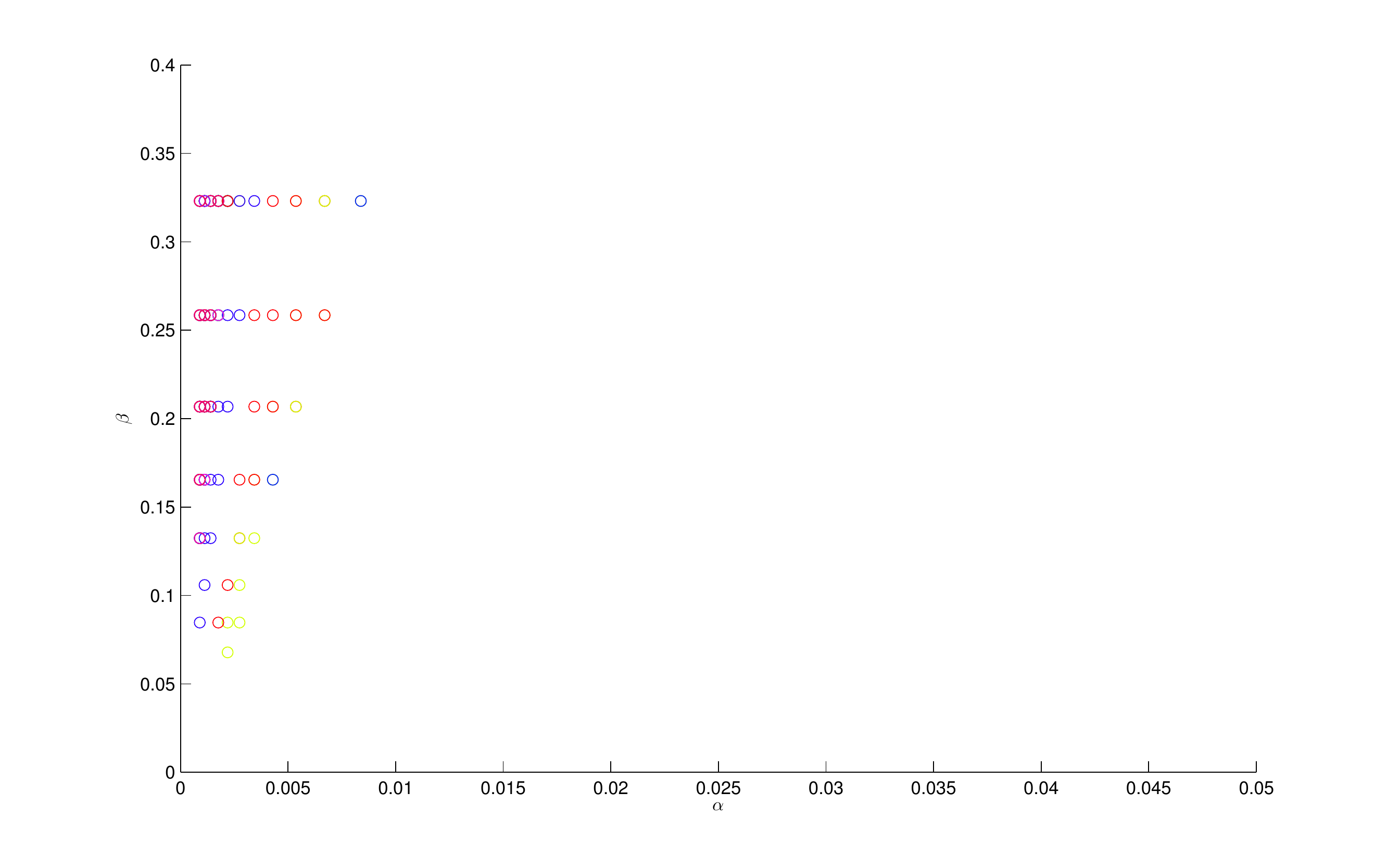}
\includegraphics[width=0.32\textwidth]{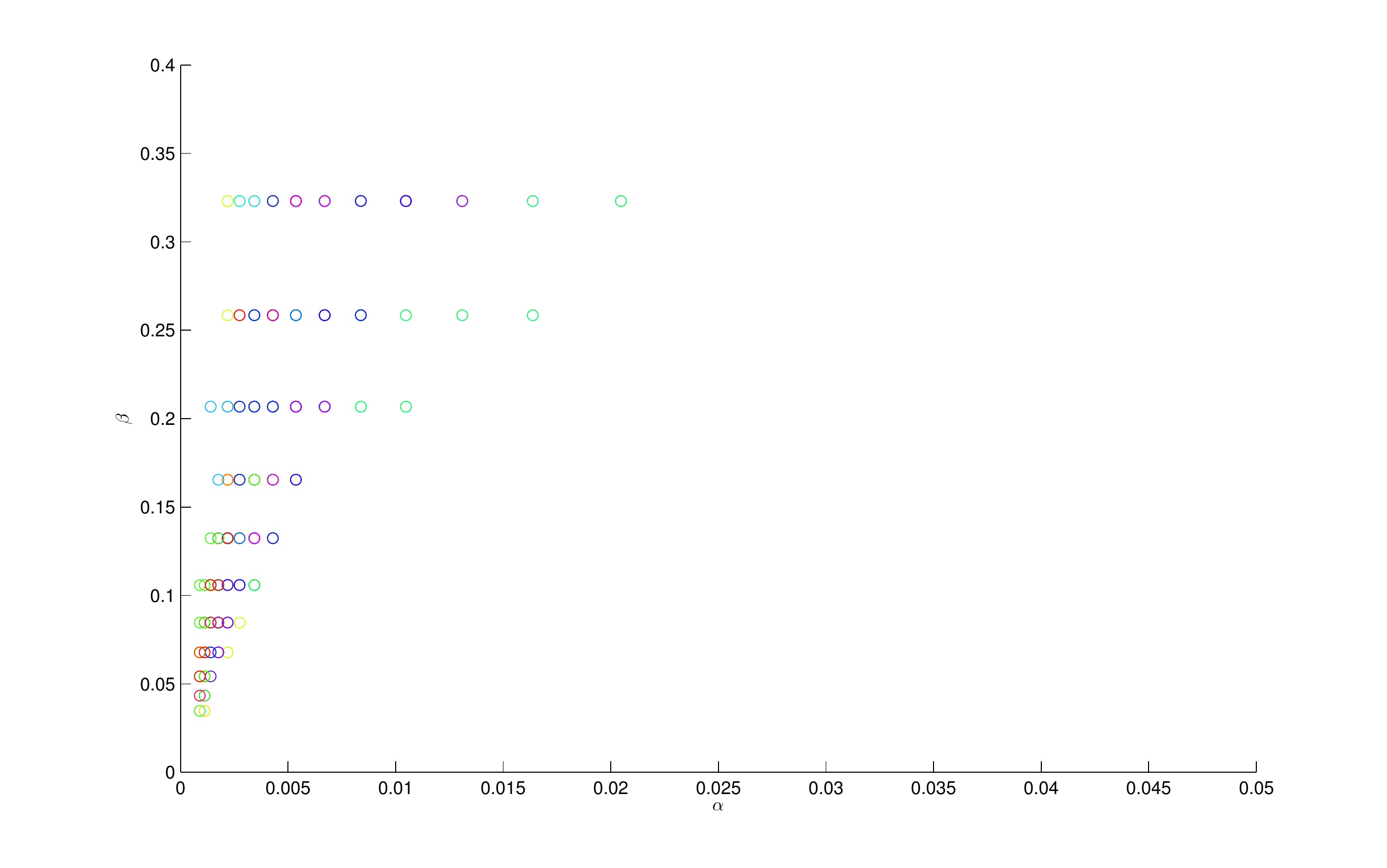}
\caption{The twenty best pairs of regularization parameters for each problem from the data set that for $p=0.3$ and $q \in \{2,4, \infty\}$ allows for the best AE.}
\label{fig:parameters0}
\end{figure}

\begin{figure}[hpt]
\includegraphics[width=0.32\textwidth]{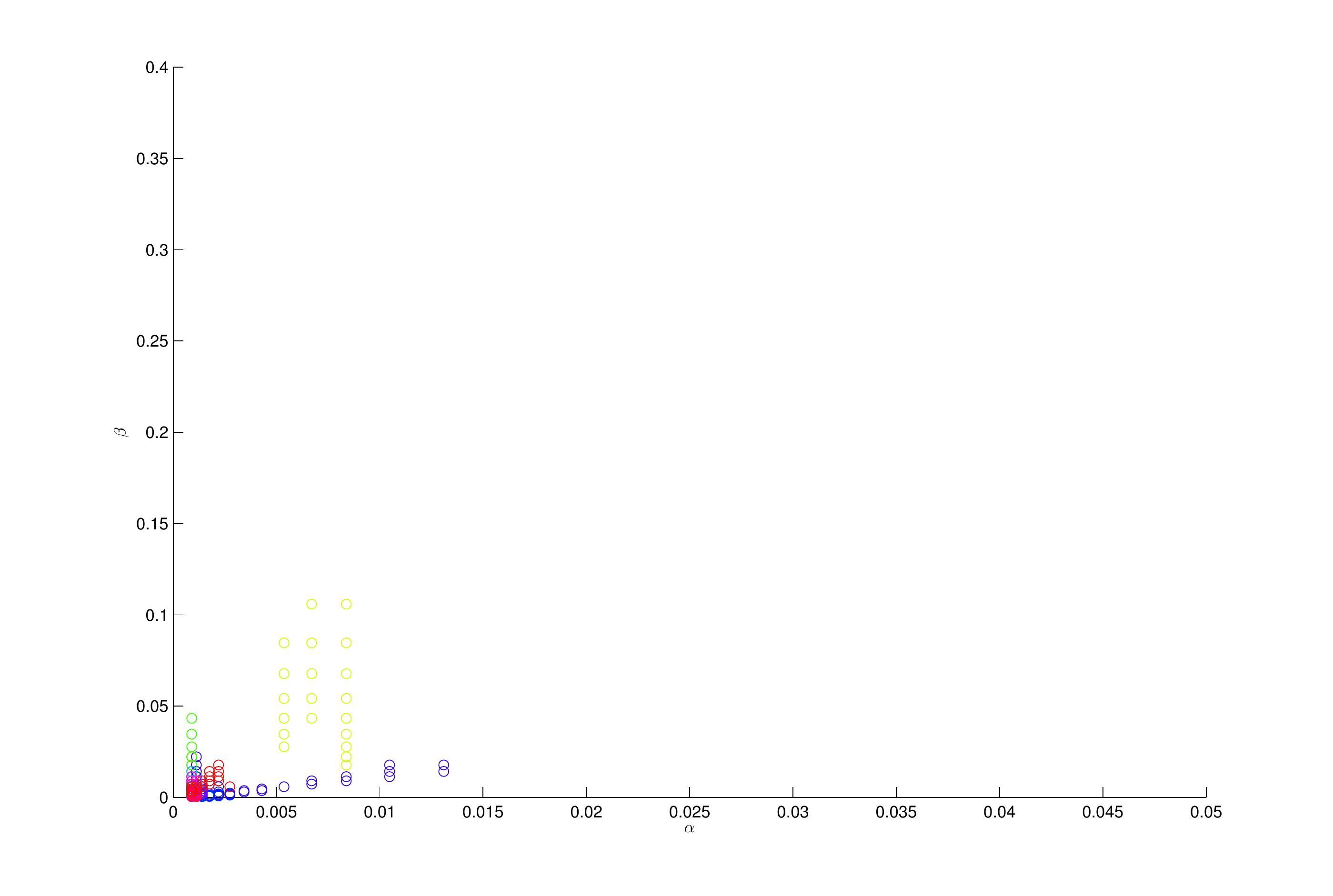}
\includegraphics[width=0.32\textwidth]{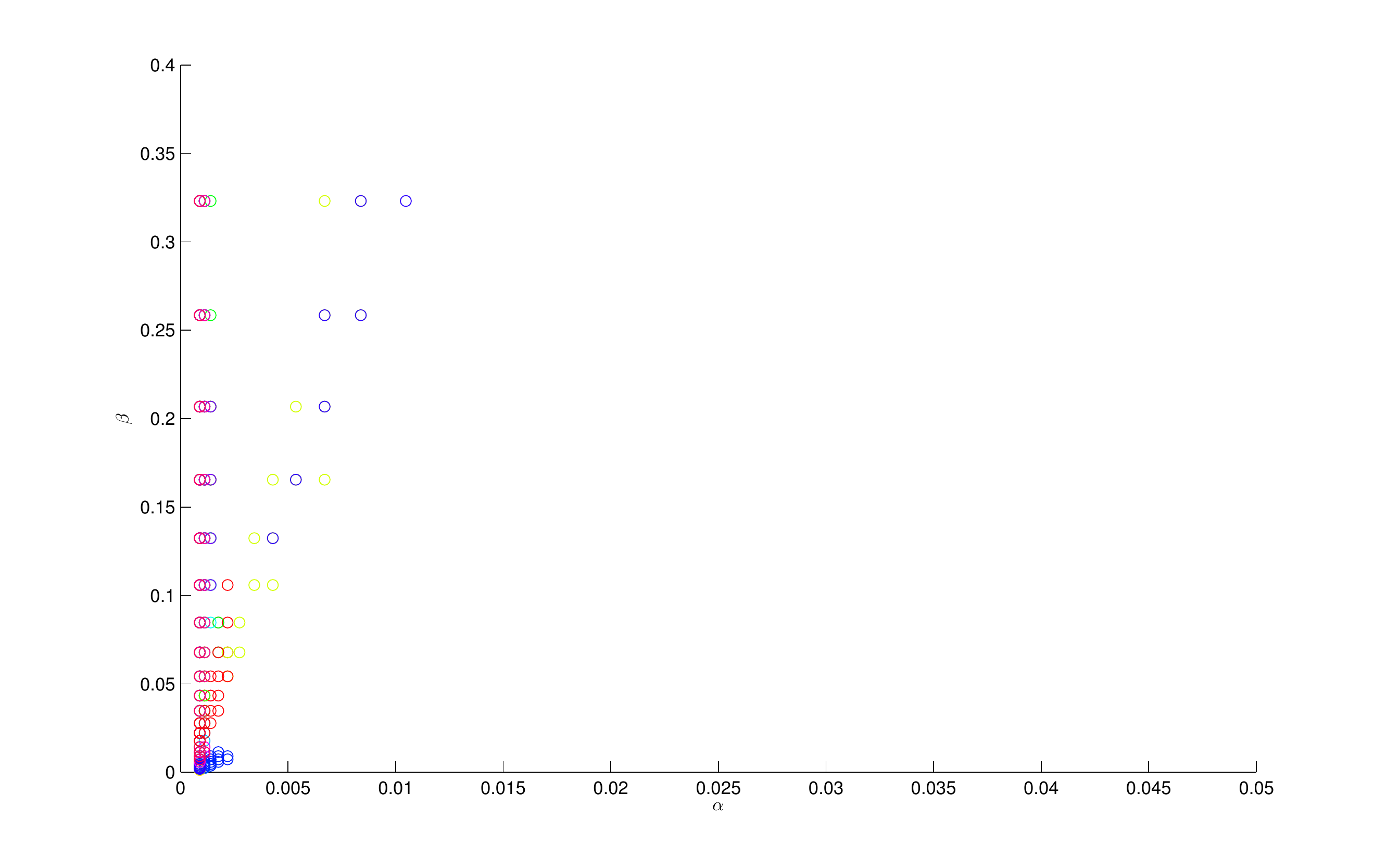}
\includegraphics[width=0.32\textwidth]{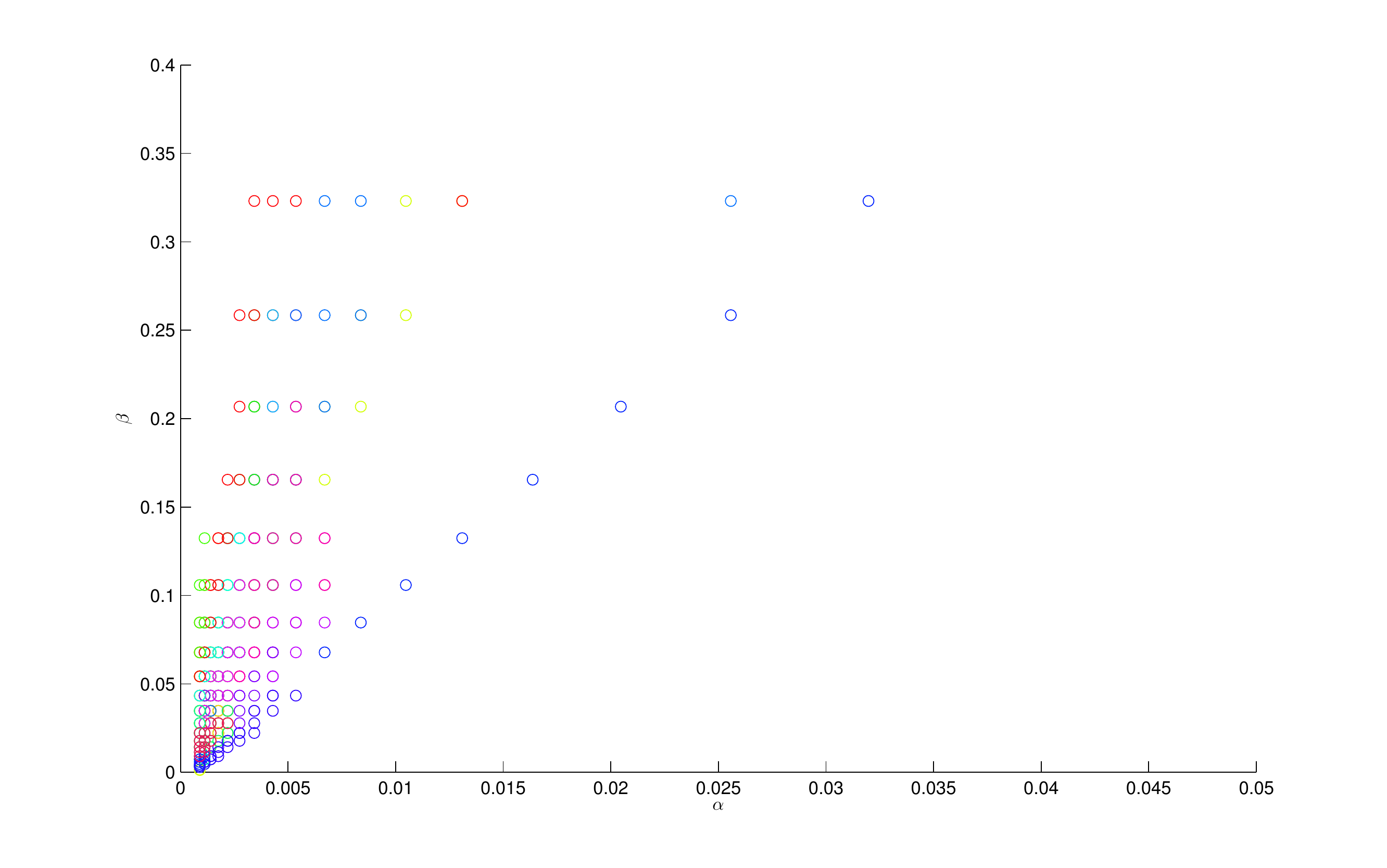}
\caption{The twenty best pairs of regularization parameters for each problem from the data set that for $p=0.3$ and $q \in \{2,4, \infty\}$ allows for the best SD.}
\label{fig:parameters}
\end{figure}

\subsection{Proposed guideline}
The analysis presented here suggests to perform the following best practice, when it comes to separate sparse solutions from additive noise in compressed sensing problems:

\begin{enumerate}
	\item Pick $p \in (0,1)$ and $q=2$;
	\item select regularization parameters $(\alpha, \beta)$ from a feasible region of parameters for a specific $(p,q)$ pair (see Figures \ref{fig:parameters0} and \ref{fig:parameters});
	\item iterate (\ref{algorithm_thresholding_fun}) for $u^{(0)} = v^{(0)}=0$ and $L,~M$ chosen at will until the desired accuracy is reached or the number of iterations exceeds some prescribed value. 
\end{enumerate}

\appendix

\section{Calculation for 2D example}
\label{app:A}
For representative purposes, we will show $u^{(n)}\in U_1^1$, for all $n\in\mathbb{N}$. Without loss of generality, we assume $y_2>y_1$ and prove the above statement by induction. By definition $u^{(1)}\in U_1^1$. It remains to show the induction step $u^{(n)} \in U_1^1 \Rightarrow u^{(n+1)}\in U_1^1$. Then, the repeated application of the induction step yields the statement. 

If $y-v^{(n)}\in U^1_1$, then there exists an $\hat{\alpha}$ such that $y-v^{(n)} = \mathbb S_{\hat{\alpha}}^1(y)$ and by a simple case-by-case analysis, one verifies $u^{(n+1)}=\mathbb S^1_{\alpha}(\mathbb S^1_{\bar{\alpha}}(y)) = \mathbb S^1_{\alpha+\bar{\alpha}}(y)\in U_1^1$. Thus, it remains to show $y-v^{(n)} \in U^1_1$. 

We know that by definition 
\begin{equation}
\label{eq:yminusv}
y-v^{(n)} = y- \mathbb S_{\beta}^{\infty}(y-u^{(n)}).
\end{equation}
Since, by induction hypothesis, $u^{(n)}\in U^1_1$, there exists a $\gamma$ such that $u^{(n)} = \mathbb S_{\gamma}^1(y)$. We choose an equivalent but more practical representation for elements in $U_1^1$ by employing an additional parameterization: There exist two cases:
	\begin{addmargin}[1cm]{1cm}

\begin{compactenum}[(A)]
 \item $u^{(n)} = \left(\begin{matrix}0 \\ \check{\gamma}\end{matrix}\right)$, for $\check{\gamma}\in [0,y_2-y_1]$;
 \item $u^{(n)} = \left(\begin{matrix}\hat{\gamma} \\ y_2-y_1 + \hat{\gamma}\end{matrix}\right)$, for $\hat{\gamma}\in [0,y_1]$.
 \end{compactenum}
\end{addmargin}

Each of these two cases has to be subdivided into sub-cases related to $\hat{\gamma}$ and $\check{\gamma}$. In the following table we summarize all sub-cases, an equivalent formulation in terms of the definition of $\mathbb S_{\beta}^{\infty}(y-u^{(n)})$, and the result of $y-v^{(n)}$.

\begin{tabular}{l|l|l|l}
& case & equivalent formulation & $y-v^n$ (by \eqref{eq:yminusv}) \\
\hline
A.1 & $\check{\gamma} > y_2 - \beta / 2 + y_1$ & $|y_1-u^{(n)}_1| + |y_2-u^{(n)}_2| < \beta / 2$ & $\left(\begin{matrix}y_1 \\ y_2 \end{matrix}\right)$ \\
\hline
A.2 & $\check{\gamma} < y_2 - \beta / 2 - y_1$ & $|y_1-u^{(n)}_1| < |y_2-u^{(n)}_2| - \beta / 2$ & $\left(\begin{matrix}0 \\ \check{\gamma} + \beta / 2 \end{matrix}\right)$ \\
\hline
A.3 & else & else & $\left(\begin{matrix} \frac{y_1-y_2+\beta / 2 +\check{\gamma}}{2} \\ (y_2-y_1 ) + \frac{y_1-y_2+\beta / 2 +\check{\gamma}}{2} \end{matrix}\right)$\\
\hline
B.1 & $\hat{\gamma} > y_1 - \beta / 4$ & $|y_1-u^{(n)}_1| + |y_2-u^{(n)}_2| < \beta / 2$ & $\left(\begin{matrix}y_1 \\ y_2 \end{matrix}\right)$ \\
\hline
B.2 & else & else & $\left(\begin{matrix} \hat{\gamma} + \beta / 4\\ (y_2-y_1 ) + \hat{\gamma} + \beta / 4 \end{matrix}\right)$
\end{tabular}

It remains to check for each case if the result of $y-v^{(n)}$ is an element of  $U_1^1$. Obviously in the cases A.1 and B.1, it is true. For the other cases, we check if the result can be expressed in terms of the above given practical representation of elements in $U_1^1$. In case A.2, by definition we have $0 \leq \check{\gamma} + \beta /2< y_2-y_1$. In  case A.3, it holds $y_2 - \beta / 2 - y_1 \leq \check{\gamma} \leq y_2 - \beta / 2 + y_1$ and thus obtain by adding $-y_2 + \beta / 2 + y_1$ and division by 2 that $ 0 \leq \frac{y_1-y_2+\beta / 2 +\check{\gamma}}{2} \leq y_1$. In case B.2, we immediately get  $\hat{\gamma} \leq y_1 - \beta / 4$ that $0 \leq \hat{\gamma} + \beta / 4 \leq y_1$. Thus, we have shown the statement for all cases.

\section*{Acknowledgements}

The  part of this work has been prepared, when Valeriya Naumova was staying at RICAM as a PostDoc. She gratefully acknowledges the partial support by the Austrian Fonds zur F\"orderung der Wissenschaftlichen Forschung (FWF), grant P25424 \enquote{Data-driven and problem-oriented choice of the regularization space}, and of the START-Project \enquote{Sparse Approximation and Optimization in High-Dimensions}. Steffen Peter acknowledges the support of the Project \enquote{SparsEO: Exploiting the Sparsity in Remote Sensing for Earth Observation} funded by Munich Aerospace.

\bibliographystyle{plain}
\bibliography{MinMPF_AIT_OPC}

\end{document}